\numberwithin{equation}{subsection}
\newtheorem{lemma}[equation]{Lemma}
\newtheorem{proposition}[equation]{Proposition}
\newtheorem{theorem}[equation]{Theorem}
\newtheorem{corollary}[equation]{Corollary}
\newtheorem{thm}{Theorem}
\theoremstyle{remark}
\newtheorem*{remark}{Remark}
\newtheorem{cx}[equation]{Counterexample}
\DeclareMathOperator{\Spec}{Spec}
\DeclareMathOperator{\Proj}{Proj}
\DeclareMathOperator{\Sym}{Sym}
\DeclareMathOperator{\gp}{gp}
\DeclareMathOperator{\cha}{char}
\DeclareMathOperator{\td}{tr.deg}
\DeclareMathOperator{\spe}{sp}
\DeclareMathOperator{\cok}{cok}
\DeclareMathOperator{\dlog}{dlog}
\DeclareMathOperator{\Grass}{Grass}
\DeclareMathOperator{\sm}{sm}
\DeclareMathOperator{\im}{im}
\DeclareMathOperator{\an}{an}
\DeclareMathOperator{\pr}{pr}
\DeclareMathOperator{\rk}{rk}
\DeclareMathOperator{\et}{\acute{e}t}
\DeclareMathOperator{\Fet}{f\acute{e}t}
\begin{document}

\title{Bertini theorems for singular schemes and nearby cycles in mixed characteristic}

\author{R\'emi Lodh}
\email{remi.shankar@gmail.com}
\subjclass[2000]{Primary: 14B05, Secondary: 14M25, 14F30}
\keywords{Bertini theorem, logarithmic structures, hyperplane section, $p$-adic Hodge theory}
\begin{abstract}
We first study hyperplane sections of some singular schemes over a field. We prove a Bertini theorem for the log smoothness of generic hyperplane sections of a large class of log smooth schemes over a log point. We also give an abstract generalization of the usual Bertini theorem for smoothness. We then apply this result to the study of hyperplane sections of regular schemes over a complete discrete valuation ring of characteristic zero with algebraically closed residue field. Lastly, give an exposition of a result of Faltings which states that one can compute the $p$-adic nearby cycles of smooth schemes over a discrete valuation ring of mixed characteristic as Galois cohomology.
\end{abstract}
\maketitle

\section*{Introduction}
The purpose of this paper is to study hyperplane sections of certain schemes, including log smooth schemes. We first study the case of schemes over a field $k$ and prove two variants of Bertini's theorem for smoothness. The first one is a Bertini theorem for log smoothness. To state it, let be a monoid $Q$ and a map $Q\to k$ such that $(k,Q)$ is a \emph{log point} (\ref{setup2}). Assume $X\subset\mathbb{P}^n_k$ is a subscheme, $H\subset\mathbb{P}^n_k$ a hyperplane, $X\cap H$ the scheme-theoretic intersection, and $i:X\cap H\subset X$ the inclusion. Let $M$ be a log structure on $X$ and $f:(X,M)\to (k,Q)$ a morphism of log schemes.

\begin{thm}[Bertini theorem for log smoothness (\ref{bertini6})]
Assume $f$ is a log smooth morphism. If $f$ is a \emph{sharp} morphism of log schemes, then for $H$ generic $(X\cap H, i^*M)$ is log smooth over $(k,Q)$.
\end{thm}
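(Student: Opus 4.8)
The plan is to reduce log smoothness of the section to a pointwise condition on the sheaf of log differentials, and then run the classical incidence‑variety and dimension count in the dual projective space, with sharpness entering to control the log‑singular strata. Write $\Omega^1_X$ for the sheaf of log differentials $\Omega^1_{(X,M)/(k,Q)}$, and $\Omega^1_{X\cap H}$ for that of $(X\cap H, i^*M)$. Since $i^*M$ is by definition the pullback log structure, the closed immersion $i$ is strict, so functoriality of log differentials gives the conormal sequence
$$\mathcal I/\mathcal I^2 \xrightarrow{\ \overline{d}\ } i^*\Omega^1_X \longrightarrow \Omega^1_{X\cap H}\longrightarrow 0,$$
where $\mathcal I=\mathcal O_X(-H)$ is the invertible ideal of the Cartier divisor $X\cap H$ and $\overline{d}$ sends a local equation $\ell$ of $H$ to $d\ell$. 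Because $f$ is log smooth, $\Omega^1_X$ is locally free of rank $d$, the relative log dimension. Granting that the section stays flat over $(k,Q)$ and that $i^*M$ is again fine and integral over the base (this is where I expect to use sharpness; see below), log smoothness of the section at $x\in X\cap H$ is equivalent to local freeness of $\Omega^1_{X\cap H}$ of rank $d-1$ there, i.e. to $\overline{d}$ being a locally split injection, i.e. to the nonvanishing of $d\ell(x)$ in the fibre $\Omega^1_X\otimes\kappa(x)$.

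Next I set up the incidence variety. Let $\rho\colon\Omega^1_{X/k}\to\Omega^1_X$ be the canonical map from ordinary to log differentials; for a genuine regular function $\ell$ the class $d\ell$ is $\rho$ of its ordinary differential, so $d\ell(x)$ always lies in $W_x:=\im(\rho\otimes\kappa(x))\subseteq\Omega^1_X\otimes\kappa(x)$. With $V=H^0(\mathbb P^n,\mathcal O(1))$, form
$$B=\{(x,H)\in X\times(\mathbb P^n_k)^\vee : \ell_H(x)=0,\ d\ell_H(x)=0\ \text{in}\ \Omega^1_X\otimes\kappa(x)\}.$$
It suffices to prove $\dim B<n$, for then $\pr_2(B)$ is a proper closed subset of the dual space and every $H$ in its complement works. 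Over a point $x$ the fibre of $B$ is the projectivization of the kernel of the evaluate‑and‑log‑differentiate map $V\to\kappa(x)\oplus W_x$, $\ell\mapsto(\ell(x),\rho_x(d\ell(x)))$. Very ampleness of $\mathcal O_X(1)$ makes $V\to\mathcal P^1_X(\mathcal O(1))\otimes\kappa(x)$ surjective, hence this map is surjective, so the fibre has dimension $n-1-\dim W_x$. Stratifying $X$ by $s=\dim W_x$, one gets $\dim\big(B\cap(X_s\times(\mathbb P^n_k)^\vee)\big)=\dim X_s+(n-1-s)$, and $\dim B<n$ follows from the numerical condition
$$\dim\{x\in X:\dim W_x\le s\}\le s\qquad(0\le s\le d).$$

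The hard part is this last estimate, and this is exactly where I expect sharpness to be indispensable. Using Kato's local structure theorem, log smoothness provides étale‑locally a chart $Q\to P$ and a smooth strict morphism $X\to\Spec(k\otimes_{\mathbb Z[Q]}\mathbb Z[P])$, so that $\Omega^1_X$ splits into a horizontal smooth part, on which $\rho$ is an isomorphism, and a toric part $\mathcal O_X\otimes\cok(Q^{\gp}\to P^{\gp})$; then $\ker\rho_x$ is spanned by the $d\log$ of sections vanishing at $x$, whence $d-\dim W_x$ is the rank of the log stratum through $x$. The numerical condition thus becomes a purity statement: the locus where at least $j$ independent $d\log$‑directions survive has codimension $\ge j$ in $X$. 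I expect precisely the sharpness of $f$ to force the base monoid $Q$ to spread across the toric directions of $M$ enough to make the deep log strata correspondingly thin; the extremal case is semistable reduction, where $M$ is the normal‑crossings log structure and the stratification meets the bound with equality, so no margin is available and the hypothesis cannot simply be dropped. I also expect sharpness to supply the two facts deferred above — integrality (hence flatness) of the restricted morphism and fineness of $i^*M$ — so that the differential criterion is legitimately applied to the section. Once this stratum‑codimension estimate is established the dimension count closes, and flatness and fineness of the \emph{generic} section follow from openness of the log‑smooth locus; the main obstacle is therefore to extract that estimate from the precise definition of a sharp morphism.
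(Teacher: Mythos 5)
Your framework is sound and is in essence the same argument as the paper's (Jouanolou's dimension count, recast over the dual projective space instead of at the generic point of the parameter space, with the conormal sequence \ref{jacobi} reducing everything to the nonvanishing of $d\ell$ in the fibres of $\omega^1$). But the proof has a genuine gap exactly where the theorem's hypothesis must do its work: the stratum estimate $\dim\{x:\dim W_x\le s\}\le s$ is asserted, not proven, and you say yourself that extracting it from sharpness is ``the main obstacle.'' Until that estimate is established there is no proof. Moreover, the mechanism you propose for it --- Kato's local structure theorem, a splitting of $\omega^1$ into a horizontal part and a toric part, and sharpness forcing the deep log strata to be thin --- does not point at how sharpness actually operates. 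The paper's counterexample \ref{cx} ($X=\Spec k[t,u^{\pm1}]/(t^p)$ over the standard log point via $1\mapsto(p,1)$, $p=\cha k$) is irreducible with a single ``log stratum,'' yet $du=0$ in $\omega^1_{X/k}$, so $W_\eta=0$ at the generic point while $\td(k(\eta)/k)=1$. The failure mode without sharpness is not a geometrically thick stratum but the death of an \emph{ordinary} differential of a unit inside the log differentials; no purity statement about log strata will see this.

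What the paper actually proves, and what you need, is Lemma \ref{satz2}: for an extension of log points $(k,Q)\to(L,P)$ the canonical map $\Omega^1_{L/k}\to\omega^1_{L/k}$ admits a retraction, built from the log derivation $(p,u)\mapsto u^{-1}du$ on $M=P\oplus L^*$ --- and it is precisely the definition of sharpness (a chart $Q\to P$ with $P\to k(x)$ a log point, so that $Q$ lands in $P$ and every non-unit of $M$ maps to $0$ in $L$) that makes this pair a well-defined log derivation. Applied at a geometric point over $x\in X$, this gives $\Omega^1_{k(x)/k}\hookrightarrow\omega^1_{k(x)/k}$, hence $\dim W_x\ge\dim_{k(x)}\Omega^1_{k(x)/k}=\td(k(x)/k)$ for $k$ algebraically closed; that is your estimate, correctly formulated at the generic point of each irreducible subset (which is also how you must formulate it --- stratifying by $\dim W_x$ over all scheme points is meaningless, since e.g.\ the closed points of any variety form a dense set on which $\td(k(x)/k)=0$). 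This is why the paper runs the count as a transcendence-degree comparison at a single point $z$ of the generic hyperplane section rather than as a stratification: the relation $\sum_iU_i\,df_i=0$ in $\omega^1\otimes k(z)$ descends by \ref{satz2} to a relation in $\Omega^1_{k(x)/k}\otimes k(z)$, and the classical count then yields $\td(k(z)/k(x))\le n-d$ against $\ge n+1-d$. Your deferred worries about fineness of $i^*M$ and flatness are, by contrast, non-issues: $i$ is strict so $i^*M$ is automatically fine, and the paper's \ref{jacobi} plus a Nakayama argument (as in \ref{logbertinigeneric}) upgrade the fibrewise exactness to log smoothness without any auxiliary integrality input.
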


The notion of sharp morphism of log schemes is defined in \ref{sharp}. Suffices to say here that sharpness is a quite natural condition which is mostly pertinent to positive characteristic (cf. \ref{tame}). The theorem is optimal in the sense that we show by example that the sharpness assumption cannot be dropped in general (\ref{cx}).

This Bertini theorem applies in particular to all varieties with toric singularities as well as singular schemes which are locally isomorphic to a normal crossings divisor in a smooth variety. In fact, the latter case was previously shown by Jannsen and Saito \cite{jannsensaito} by reducing to the Bertini theorem for classical smoothness. By contrast, our proof is a direct logarithmic adaptation of the transcendence degree argument given by Jouanolou in \cite{bertini}.

The second variant of Bertini's theorem, hinted at in \cite[2.1]{fa2}, does not make any assumption on the singularities of $X$, but rather assumes the existence of a morphism of finite type $m:\mathcal{X}\to X$ and a vector bundle quotient $\mathcal{E}$ of $m^*\Omega^1_{X/k}$.

\begin{thm}[Bertini theorem for abstract smoothness (\ref{bertini7})]\label{bertabstract}
Let $\mathcal{N}$ denote the conormal sheaf of $X\cap H$ in $X$, and consider the natural morphism
\[ \phi:m^*\mathcal{N}\to i^*\mathcal{E} \]
of coherent sheaves on $\mathcal{X}\times_X(X\cap H)$. If $\dim\mathcal{X}\leq\rk\mathcal{E}$, then for $H$ generic the cokernel of $\phi$ is a vector bundle of rank $\rk\mathcal{E}-1$ on $\mathcal{X}\times_X(X\cap H)$.
\end{thm}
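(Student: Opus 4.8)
The plan is to reduce the statement to the nowhere‑vanishing of a single section, and then to control the locus where that section vanishes by a dimension count on an incidence variety, in the spirit of Jouanolou's transcendence‑degree proof of the classical Bertini theorem.

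First I would make $\phi$ explicit. Write $H=\{\ell=0\}$ for a linear form $\ell\in V:=H^0(\mathbb{P}^n_k,\mathcal{O}(1))$, and set $\mathcal{X}_H:=\mathcal{X}\times_X(X\cap H)$, $\bar{x}:=m(x)$. For $H$ avoiding the finitely many associated points of $X$ — a condition cutting out a proper closed subset of $\check{\mathbb{P}}^n=\mathbb{P}(V)$ — the section $\ell$ is a nonzerodivisor on $\mathcal{O}_X$, so $\mathcal{N}\cong\mathcal{O}_X(-1)|_{X\cap H}$ is a line bundle and $m^*\mathcal{N}$ is a line bundle on $\mathcal{X}_H$. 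Under this identification the conormal map $\mathcal{N}\to i^*\Omega^1_{X/k}$ is ``$\ell\mapsto d\ell$'', i.e. it corresponds to the restricted twisted differential $d\ell\in H^0\bigl(X\cap H,\,\Omega^1_{X/k}(1)|_{X\cap H}\bigr)$; composing with $m^*\Omega^1_{X/k}\twoheadrightarrow\mathcal{E}$ exhibits $\phi$ as the bundle map attached to the image section $\overline{d\ell}\in H^0\bigl(\mathcal{X}_H,\,(\mathcal{E}\otimes\mathcal{O}(1))|_{\mathcal{X}_H}\bigr)$. A map from a line bundle to a rank‑$\rk\mathcal{E}$ bundle has locally free cokernel of rank $\rk\mathcal{E}-1$ exactly when it is injective on every fibre, that is, when $\overline{d\ell}$ is nowhere zero. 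So it suffices to prove that, for $H$ generic, $\overline{d\ell}(x)\neq 0$ for every $x\in\mathcal{X}_H$.

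Next I would introduce the incidence variety
\[ \tilde{Z}=\{(x,[\ell])\in\mathcal{X}\times\check{\mathbb{P}}^n:\ \ell(m(x))=0,\ \overline{d\ell}(x)=0\}, \]
whose image under the projection $\tilde{Z}\to\check{\mathbb{P}}^n$ is precisely the locus of bad hyperplanes. The crucial local computation is that for each fixed $x\in\mathcal{X}$ the conditions $\ell(\bar{x})=0$ and $\overline{d\ell}(x)=0$ impose exactly $1+\rk\mathcal{E}$ independent linear conditions on $\ell\in V$. Indeed, evaluation $V\to\mathcal{O}(1)(\bar{x})$ is surjective because $\mathcal{O}(1)$ is globally generated, which accounts for the first condition; and on its kernel $V_0=\{\ell:\ell(\bar{x})=0\}$ the differential $\ell\mapsto d\ell(\bar{x})$ is an isomorphism onto $(\Omega^1_{\mathbb{P}^n}\otimes\mathcal{O}(1))(\bar{x})$ (a degree‑one form vanishing to order two at a point is zero), which surjects onto $(\Omega^1_{X/k}\otimes\mathcal{O}(1))(\bar{x})$ since $\Omega^1_{\mathbb{P}^n}|_X\twoheadrightarrow\Omega^1_{X/k}$, and finally onto $(\mathcal{E}\otimes\mathcal{O}(1))(x)$ since $\mathcal{E}$ is a quotient \emph{bundle} of $m^*\Omega^1_{X/k}$. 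Hence $\overline{d\ell}(x)=0$ imposes $\rk\mathcal{E}$ further independent conditions on $V_0$, and every fibre of $\tilde{Z}\to\mathcal{X}$ is a linear subspace of $\check{\mathbb{P}}^n$ of dimension $(n+1)-(1+\rk\mathcal{E})-1=n-\rk\mathcal{E}-1$.

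Establishing this codimension count is the main obstacle: it is exactly where the hypothesis that $\mathcal{E}$ is a quotient bundle and the global generation of $1$‑jets of $\mathcal{O}(1)$ are used, and it replaces the transcendence‑degree bookkeeping of Jouanolou's argument. Granting it, the rest is immediate. Since the fibres of $\tilde{Z}\to\mathcal{X}$ have constant dimension $n-\rk\mathcal{E}-1$,
\[ \dim\tilde{Z}\ \leq\ \dim\mathcal{X}+(n-\rk\mathcal{E}-1)\ \leq\ \rk\mathcal{E}+n-\rk\mathcal{E}-1\ =\ n-1, \]
using $\dim\mathcal{X}\leq\rk\mathcal{E}$. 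Therefore the image of $\tilde{Z}$ in $\check{\mathbb{P}}^n$ has dimension at most $n-1$ and is not dense. Intersecting the complement of its closure with the open locus of hyperplanes avoiding the associated points of $X$ yields a nonempty open subset of $\check{\mathbb{P}}^n$; for $H$ there, $m^*\mathcal{N}$ is a line bundle and $\overline{d\ell}$ is nowhere zero on $\mathcal{X}_H$, so $\phi$ is fibrewise injective and $\cok\phi$ is locally free of rank $\rk\mathcal{E}-1$, as claimed.
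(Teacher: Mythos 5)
Your argument is correct, but it follows a genuinely different route from the paper's. The paper proves this statement (\ref{bertini7}) by reducing to an affine situation (\ref{reduction}, \ref{bertini2}, \ref{bertgeneral}): over each standard chart it forms the universal hyperplane section $Z\subset X\times_k\mathbb{A}^{n+1}_k$, passes to the generic point $\mathbb{K}=k(U_0,\dots,U_n)$ of the parameter space, and shows fibrewise injectivity of $\phi$ at a point $z\in\mathcal{Z}_{\mathbb{K}}$ by Jouanolou's transcendence-degree bookkeeping: a relation $\sum_iU_i\,df_i=0$ in $\mathcal{E}\otimes k(z)$ yields equations $U_j+\sum_{i>d}a_{ij}U_i=0$, hence $k(z)=k(x)(U_{d+1},\dots,U_n)$ and $\td(k(z)/k(x))\le n-d$, contradicting $\td(k(z)/k(x))\ge n+1-d$. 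You instead stay in the projective setting, identify $\phi$ with the section $\overline{d\ell}$ of $(\mathcal{E}\otimes\mathcal{O}(1))|_{\mathcal{X}\times_X(X\cap H)}$, and bound the dimension of the incidence variety $\tilde Z$ by the separation of first-order jets by $\mathcal{O}(1)$, i.e.\ your key surjectivity $V_0\to(\mathcal{E}\otimes\mathcal{O}(1))(x)$. The two counts are dual to one another --- your assertion that the fibre of $\tilde Z\to\mathcal{X}$ is a linear space of codimension $1+\rk\mathcal{E}$ in $\check{\mathbb{P}}^n$ is precisely the paper's system of relations read over a point of $\mathcal{X}$ rather than over the generic hyperplane --- but your version dispenses with the affine charts, the Grassmannian comparison of \S3, and the function-field formalism, at the small cost of the jet-bundle computation and of checking injectivity only at closed points (harmless: the locus where a map from a line bundle to a vector bundle vanishes on fibres is closed). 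Two minor remarks: the detour through the associated points of $X$ is unnecessary, since $\mathcal{N}$ is in any case a cyclic quotient of $\mathcal{O}_X(-1)|_{X\cap H}$ and so $\cok(\phi)$ agrees with the cokernel of the composite map from that line bundle whether or not $\ell$ is a nonzerodivisor; and your fibre-dimension claim for $\tilde Z\to\mathcal{X}$ in fact holds at every scheme-theoretic point (the map $V_0\otimes k(x)\to(\Omega^1_{\mathbb{P}^n/k}\otimes\mathcal{O}(1))\otimes k(x)$ is an isomorphism at non-closed points too), which is worth saying if you want to invoke the fibre-dimension bound without first reducing to $k$ algebraically closed.
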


In the case $X$ is smooth and $\mathcal{X}=X$ one recovers the classical Bertini theorem for smoothness; in fact, the proof of the latter given in \cite{bertini} applies mutatis mutandis.

We then study hyperplane sections of singular schemes with the additional constraint that the hyperplanes all pass through a given point. In this case simple examples show that, unlike the classical smooth case, for $X$ log smooth one cannot guarantee that a generic hyperplane section through $P\in X$ is log smooth at the point $P$ (take $X$ to be the union of coordinate axes in affine space and $P$ the origin). The interesting case is when $X$ is defined over a complete discrete valuation ring $V$ with singular special fibre. Assume the residue field $k$ of $V$ is algebraically closed and let $K$ denote the fraction field of $V$.

\begin{thm}[\ref{maingoodnhbprop}]\label{goodetalemap}
Assume $\cha(K)=0$. Let $X$ be a flat projective $V$-scheme with geometrically reduced generic fibre of dimension $d$ everywhere. Fix a closed point $P\in X_k:=X\otimes_Vk$. If $X$ is regular at $P$, then, up to modifying the projective embedding of $X$, for generic $V$-hyperplanes $H_1,...,H_d$ passing through $P$ the $V$-scheme $X\cap H_1\cap\cdots\cap H_d$ has \'etale generic fibre.
\end{thm}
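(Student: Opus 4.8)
The plan is to reduce the statement to a Bertini theorem on the generic fibre $X_K$ and then transfer the resulting genericity back to families of $V$-hyperplanes constrained to pass through $P$. Since $\cha(K)=0$, a finite $K$-scheme is étale precisely when it is reduced, so it suffices to show that the generic fibre $X_K\cap H_{1,K}\cap\cdots\cap H_{d,K}$ (which is the generic fibre of the intersection, since intersection commutes with the flat base change $V\to K$) is finite and smooth of dimension $0$ over $K$. The crucial observation is that $P$ lies on the special fibre, so $P\notin X_K$; hence the base locus of the linear system of hyperplanes through $P$ is disjoint from $X_K$, and over $K$ this system becomes the \emph{full} system: the $V$-module $\Lambda$ of degree-$m$ forms whose reduction vanishes at $P$ satisfies $\Lambda\otimes_VK=H^0(\mathbb P^n_K,\mathcal O(m))$, because the quotient $H^0(\mathcal O(m))/\Lambda\cong k$ is torsion. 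First I would replace the embedding by a Veronese re-embedding of degree $m\gg 0$ (this is the promised modification of the projective embedding); using the regularity of $X$ at $P$ together with the geometric reducedness of $X_K$, this guarantees that the constrained system still separates points and tangent directions on the smooth locus $X_K^{\sm}$, so that the Bertini hypotheses are met.

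Next I would run Bertini on the generic fibre. Let $T=\mathbb P(\Lambda)$ be the parameter $V$-scheme of hyperplanes through $P$, so that $T_K$ is the full dual projective space and $T_k$ is irreducible. On $X_K$ the non-smooth locus $X_K\setminus X_K^{\sm}$ has dimension $\le d-1$, while $X_K^{\sm}$ is dense and smooth of dimension $d$. Applying the abstract Bertini theorem \ref{bertabstract} with $\mathcal X=X_K^{\sm}$ and $\mathcal E=\Omega^1_{X_K^{\sm}/K}$ (so that $\rk\mathcal E=d=\dim\mathcal X$) and iterating over the product parameter space shows that for a tuple $(H_1,\dots,H_d)$ in a dense open $U_K\subset T_K^{d}$ the intersection of $X_K^{\sm}$ with $H_{1,K},\dots,H_{d,K}$ is smooth of dimension $0$; simultaneously a dimension count cuts $X_K\setminus X_K^{\sm}$ down to the empty set after $d$ generic sections. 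Setting $B_K=T_K^{d}\setminus U_K$, a proper Zariski-closed subset, the generic-fibre intersection is finite and étale over $K$ for every tuple outside $B_K$.

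The main obstacle, and the final step, is to transfer this from the scheme-theoretic generic point to honest $V$-hyperplanes through $P$, whose defining data live over $V$ and whose reductions must be genuine hyperplanes through the special-fibre point $P$. I would pass to the scheme-theoretic closure $\overline{B_K}\subset T^{d}$: being the closure of a subscheme of the generic fibre inside a $V$-flat scheme, $\overline{B_K}$ is itself $V$-flat and torsion-free, so every component dominates $\Spec V$ and $\dim(\overline{B_K})_k=\dim B_K<\dim T_K^{d}=\dim T_k^{d}$. Thus $(\overline{B_K})_k$ is a proper closed subset of the irreducible special fibre $T_k^{d}$. A generic valid tuple of $V$-hyperplanes through $P$ is a $V$-point $t\in T^{d}(V)$ whose reduction $\bar t$ avoids the proper closed set $(\overline{B_K})_k$; for such $t$ we must have $t_K\notin B_K$, since $t_K\in B_K$ would force its specialization $\bar t$ into $(\overline{B_K})_k$. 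Hence the generic fibre of $X\cap H_1\cap\cdots\cap H_d$ avoids the bad locus and is étale over $K$. The delicate point throughout is exactly this interplay between the two fibres: the constraint ``through $P$'' is invisible on the generic fibre yet must be imposed integrally, and it is the $V$-flatness of $\overline{B_K}$ (equivalently, the Zariski-density of the reductions of valid hyperplanes through $P$) that prevents the bad locus from swallowing the entire special fibre.
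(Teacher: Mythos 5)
Your closure argument is internally sound, but it proves a weaker statement than the one the paper actually establishes, and the gap is visible in the fact that you never use the hypothesis that $X$ is regular at $P$. The precise form of the theorem (\ref{maingoodnhbprop}(i)) produces a dense open $U\subset\mathcal{H}_P$ such that the conclusion holds for \emph{every} $L\in G(K)$ whose specialization $\spe_G(L)$ lies in $U$ --- i.e.\ the genericity is a condition only on the special fibres $\bar H_1,\dots,\bar H_d$ of the hyperplanes. Your genericity condition lives instead on the reduction in $\mathbb{P}(\Lambda\otimes_Vk)^d$, which is a strictly finer invariant: two $V$-hyperplanes through $P$ with the same special fibre (e.g.\ $\ell$ and $\ell+\pi\ell'$ with $\ell'(P)\neq0$) have different images there. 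The space $\mathbb{P}(\Lambda\otimes_Vk)^d$ maps onto the space of tuples of $k$-hyperplanes through $P$ with positive-dimensional fibres, so your bad locus $(\overline{B_K})_k$, although of codimension $\geq1$, can perfectly well dominate $\mathcal{H}_P$; your argument then excludes \emph{some} lift over each point of $\mathcal{H}_P$ but certifies no dense open $U\subset\mathcal{H}_P$ at all. This is not a cosmetic difference: without regularity at $P$ the strong statement is false. Take $X=V(xy-\pi^2z^2)\subset\mathbb{P}^2_V$ and $P=(0{:}0{:}1)$ (so $X$ is flat with smooth generic fibre but not regular at $P$, since $\mathfrak{m}_P/\mathfrak{m}_P^2$ is $3$-dimensional). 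For every $\bar a,\bar b\in k^*$ the line $ax+by+2\pi\sqrt{ab}\,z=0$ passes through $P$, has special fibre $\bar ax+\bar by=0$, and is tangent to the conic $X_K$, so its intersection with $X_K$ is a double point. Hence no dense open of $\mathcal{H}_P$ works, and any proof that does not invoke regularity at $P$ cannot yield the theorem as stated.

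What your approach misses is precisely the behaviour of $X_K\cap L$ at the points which specialize to $P$: from the generic-fibre point of view these points are invisible, but they are exactly where a condition on $\spe_G(L)$ alone loses control. The paper splits the problem accordingly. Away from $\spe^{-1}(P)$ it runs the abstract Bertini theorem on the \emph{special} fibre (via the Grassmannian bundle $\mathcal{X}\subset\Grass_d(\Omega^1_{X/T})$ in \ref{awayfromP}) and lifts surjectivity by Nakayama, which is why the resulting open set lives in $\mathcal{H}_P$ rather than in a finer parameter space. At the points specializing to $P$ it uses regularity: the blow-up of $X$ at $P$ embeds linearly into $\mathbb{P}(\Gamma(I_P(2)))$ with linear exceptional divisor (\ref{key}, \ref{closedimmersion}), and \ref{exceptionaldivisorintersection} shows that for $\spe_G(L)$ in a dense open of $\mathcal{H}_P$ the scheme $X\cap L$ is regular, hence flat with \'etale generic fibre, at $P$ itself. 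This is also the actual role of the degree-$2$ Veronese modification of the embedding, which in your write-up is invoked for a separation-of-points condition that is vacuous over $K$ and never used afterwards. To repair your proof you would need to add an argument of this second kind; the flatness of $\overline{B_K}$ alone cannot supply it.
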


The basic idea of the proof is to find a nice model of $X_K:=X\otimes_VK$ (in fact the blow up of $X$ at $P$) which has a convenient projective embedding for taking hyperplane sections.

Finally, we use the results on hyperplane sections to construct $K(\Pi,1)$-neighbourhoods of smooth $V$-schemes following Faltings \cite{fa2}, and deduce that the $p$-adic nearby cycles can be computed locally as the Galois cohomology of the geometric generic fibre. For another exposition of this result, see Olsson \cite{olsson}.

\subsection*{Acknowledgement}
In a previous version of this paper some claims were incorrect. I am indebted to Piotr Achinger for providing counterexamples to those claims.

\subsection*{Overview of the paper}
In \S1 we define the notion of a sharp morphism of log schemes and give examples. Then we recall some facts about log smooth morphisms.

In \S2 we set up and prove affine versions of the Bertini theorems. We first show the Bertini theorem for log smoothness \ref{logbertini}. The idea of the proof is as follows. The argument of \cite{bertini} is that if the generic hyperplane section of a smooth scheme $X$ were singular at a point $x$, then the resulting $k(x)$-linear relation between differentials in $\Omega^1_{k(x)}$ would force the transcendence degrees to be too small. Our proof reduces to this very same argument. The key is that for a point $x$ on a \emph{sharp} log scheme $X$ over a log point, the usual differentials $\Omega^1_{k(x)}$ inject into the logarithmic differentials $\omega^1_{k(x)}$ (\ref{satz2}), therefore any relation between regular differentials in the latter is in fact a relation in the former. Then in \ref{cx} we construct a counterexample which shows that the theorem is false in general for log smooth schemes which are not sharp. Afterwards we prove the Bertini theorem for abstract smoothness by applying the argument of \cite{bertini}.

In \S3 we show that the Bertini theorems in the quasi-projective case follow from the affine case already shown (\ref{bertini6}, \ref{bertini7}). Here we also follow the method sketched in \cite{bertini}.

In \S4, we study the case of schemes over a complete discrete valuation ring $V$. After some preliminaries in \ref{special}, we study the case of a generic hyperplane through a given rational point $P\in\mathbb{P}^n(k)$. For classical smoothness, up to modifying the projective embedding there is a Bertini theorem (cf. \cite[Exp. XI]{sga4.3}); the underlying idea is that in a suitable projective space any hyperplane corresponds to a hyperplane in $\mathbb{P}^n_V$ through $P$, and one can apply the usual Bertini theorem there. Although one does not obtain information about the intersection at the point $P$, this suffices in the case $X$ has good reduction, see \ref{smoothcase}. For the general case of Theorem \ref{goodetalemap} we proceed as follows: By regularity, the intersection of a generic hyperplane through $P$ with $X$ is regular at $P$. To deal with the other points we use the Bertini theorem for abstract smoothness. Ultimately, what makes this argument work is the nice projective embedding of the blow up of $X$ at $P$ provided by \ref{closedimmersion}.

In \S5 we apply the previous results to construct good neighbourhoods of points on smooth projective $V$-schemes.

In \S6 we construct the $K(\Pi,1)$-neighbourhoods of points of smooth $V$-schemes. To finish we explain the relevance to $p$-adic nearby cycles.

\section{Sharp, tame and smooth morphisms of log schemes}

\subsection{References and notation}
As a general reference for logarithmic geometry we will use K. Kato's foundational articles \cite{log}, although for a result on differentials we refer to Ogus' notes \cite{logbook} for lack of other reference. All monoids are commutative with unit element denoted by $1$ (with the exception of $\mathbb{N}$ and $\mathbb{Z}$ whose unit element is traditionally denoted by $0$). Log structures are taken for the \'etale topology. Given a morphism of monoids $Q\to P$ and a $\mathbb{Z}[Q]$-scheme $Y$ we write $Y_Q[P]:=Y\times_{\Spec(\mathbb{Z}[Q])}\Spec(\mathbb{Z}[P])$.

\subsection{Log points}\label{setup2}
Let $(k,Q)$ be a \emph{log point}, i.e. $k$ is field with a monoid $Q$ defining the log structure via the map $Q\to k$ defined
\[ q \mapsto
\begin{cases}
1 & q=1 \\
0 & q\neq 1.
\end{cases} \]
A \emph{monogenic log point} is the case where $Q$ is a monogenic monoid, i.e. $Q$ can be generated by a single element. The \emph{standard log point} is the case where $Q=\mathbb{N}$.

We consider log points as log schemes, so that a morphism of log points just means a morphism of the associated log schemes. An \emph{extension of log points} $(k,Q)\to (L,P)$ is a morphism of the log points, together with a map of monoids $Q\to P$ giving a chart for this morphism.

\subsection{Sharp morphisms}\label{sharp}
Let $f:(X,M)\to (Y,N)$ be a morphism of log schemes. Let $x\to X$ be a geometric point, and $y=f(x)$. We say that $f$ is \emph{sharp at $x$} if there is a chart $Q\to P$ of $f$ at $x$ such that $P\to k(x)$ and $Q\to k(y)$ are log points.

We say that $f$ is \emph{sharp} if $f$ is sharp at all $x\to X$.

\begin{proposition}\label{sharplemma}
Let $f:(X,M)\to (Y,N)$ be a morphism of log schemes.
\begin{enumerate}[(i)]
\item If $f$ is sharp at a geometric point $x\to X$, then, in the above notation, $P$ and $Q$ are sharp monoids and $(k(y),Q)\to (k(x),P)$ is an extension of log points.
\item If $N$ is the trivial log structure and $M$ is fine and saturated, then $f$ is sharp.
\item If $f$ is sharp and $g:Z\to X$ is a morphism of schemes, then $f\circ g:(Z,g^{\ast}M)\to (Y,N)$ is sharp.
\item If $f$ is sharp and $g:Z\to Y$ is a morphism of schemes, then $g^*(f):(X\times_YZ,(f^*(g))^{\ast}M)\to (Z,g^*N)$ is sharp.
\item If $f$ is sharp at a geometric point $x\to X$ and $M$ is integral (resp. fine, resp. saturated), then, in the above notation, $P$ is integral (resp. fine, resp. saturated).
\end{enumerate}
\end{proposition}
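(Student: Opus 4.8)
The plan is to reduce all five assertions to the defining property of a log point together with the behaviour of charts, and two elementary facts do most of the work. First, if $\phi\colon P\to k(x)$ is a log point then $P$ is sharp: since $\phi$ is a homomorphism of monoids, a unit $u\neq 1$ of $P$ would force $\phi(u)\phi(u^{-1})=\phi(1)=1$ while $\phi(u)=0$, which is absurd. Second, if the given chart $\beta\colon P\to M$ has the property that $P\to k(x)$ is a log point, then for every $p\neq 1$ the element $\beta(p)$ reduces to $0$ in $k(x)$ and is therefore a nonunit of $\mathcal{O}_{X,x}$; hence $\beta^{-1}(\mathcal{O}^{*}_{X,x})=\{1\}$, the stalk of the associated log structure is the direct sum $M_x=P\oplus\mathcal{O}^{*}_{X,x}$, and, $P$ being sharp, $\overline{M}_x:=M_x/M_x^{*}\cong P$. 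I would record these two observations at the outset, since together they carry (i) and (v).

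Part (i) is then almost done: the sharpness of $P$ and $Q$ is the first observation above applied to $P\to k(x)$ and $Q\to k(y)$. The extension of log points $(k(y),Q)\to(k(x),P)$ is the morphism of log points induced by $f$ and the chosen geometric points, equipped with the chart map $Q\to P$; its compatibility square is obtained by passing to residue fields in the square that expresses $Q\to P$ as a chart of $f$.

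Parts (iii) and (iv) are formal and nearly identical. In each case the chart $Q\to P$ of $f$ at the relevant image point pulls back to a chart of $f\circ g$ (resp.\ of $g^{*}(f)$), because charts are preserved under the pullback of log structures along the morphism of schemes $g\colon Z\to X$ (resp.\ along the projection $X\times_{Y}Z\to X$). It then suffices to check that the induced residue-field maps---$P\to k(z)$ and $Q\to k(y)$ in case (iii), $P\to k(w)$ and $Q\to k(z)$ in case (iv)---are again log points. But each of these is one of the original log point maps followed by a homomorphism of fields, and a field homomorphism preserves $0$ and $1$, so nonunits still go to $0$ and $1$ to $1$.

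For (ii) take $Q=\{1\}$ as a chart of the trivial log structure $N$, so that $Q\to k(y)$ is automatically a log point. Since $M$ is fine we may choose, \'etale-locally at the geometric point $x$, a chart $P\to M$ with $P\cong\overline{M}_x$; then $P$ is sharp, its nonzero elements map to nonunits of $M_x$ and hence into the maximal ideal, so $P\to k(x)$ is a log point and $\{1\}\to P$ exhibits $f$ as sharp at $x$. Part (v) is immediate from the identification $P\cong\overline{M}_x$ of the first paragraph, since the sharp quotient $\overline{M}_x=M_x/M_x^{*}$ inherits integrality, fineness and saturatedness from $M_x$. The step requiring the most care is precisely this local identification $P\cong\overline{M}_x$, and the parallel existence of a chart modelled on $\overline{M}_x$ in (ii): one must argue at the geometric stalk, where the log point hypothesis genuinely forces $\beta^{-1}(\mathcal{O}^{*}_{X,x})=\{1\}$. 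Once that computation is in place, all five parts follow by unwinding the definitions.
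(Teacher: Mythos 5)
Your two preliminary observations are exactly the ones the paper relies on, and they dispose of (i), (iii), (iv) and (v) in essentially the same way the paper does: it dismisses (i), (iii), (iv) as immediate from the definition, and proves (v) via the isomorphism $P\oplus\mathcal{O}_{X,x}^{\ast}\cong M_x$, which is your first-paragraph computation.

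The gap is in (ii), and it sits at the step you assert in one line: ``Since $M$ is fine we may choose, \'etale-locally at the geometric point $x$, a chart $P\to M$ with $P\cong\overline{M}_x$.'' Fineness alone does not give such a chart, so the justification as written is false. Producing a chart modelled on $\overline{M}_x:=M_x/\mathcal{O}_{X,x}^{\ast}$ amounts to splitting the extension $1\to\mathcal{O}_{X,x}^{\ast}\to M_x^{\gp}\to\overline{M}_x^{\gp}\to 1$, and such a splitting need not exist when $\overline{M}_x^{\gp}$ has torsion: if $\overline{M}_x^{\gp}\cong\mathbb{Z}/p$ with $p$ the residue characteristic, a splitting would require a $p$-th root of a unit of the strictly henselian local ring, which need not exist (already $1+t\in k[[t]]$, $\cha k=p$, has no $p$-th root). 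This is precisely where the saturation hypothesis enters, and it is the entire content of the paper's proof of (ii): for $M$ fine and saturated, $\overline{M}_x$ is fine, saturated and sharp, so $\overline{M}_x^{\gp}$ is finitely generated and torsion-free, hence free, the surjection $M_x^{\gp}\to\overline{M}_x^{\gp}$ splits, the section carries $\overline{M}_x$ into $M_x$, and the resulting map $\overline{M}_x\to\mathcal{O}_X$ is a chart in a neighbourhood of $x$. You should supply this argument (or at least cite it under the correct hypotheses); note also that you identify the wrong step as the delicate one --- the computation $\beta^{-1}(\mathcal{O}_{X,x}^{\ast})=\{1\}$ is formal, whereas the existence of the section is the real point. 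The remainder of your write-up is correct.
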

\begin{proof}
(i), (iii) and (iv) follow easily from the definition. For (ii), pick a geometric point $x\to X$ and let $P=M_x/\mathcal{O}_{X,x}^{\ast}$. Then it is well-known that the map $M_x\to P$ has a section $P\to M_x$ such that the log structure associated to $P\to \mathcal{O}_X$ is the log structure $\alpha:M\to \mathcal{O}_X$ in a neighbourhood of $x$. This is obtained by choosing a section of $M^{\gp}_x\to P^{\gp}$ which exists because $P^{\gp}$ is a free abelian group (since $M$ is fine and saturated). Let $p\in P$. If $p\in\alpha^{-1}(\mathcal{O}_{X,x}^{\ast})\cong \mathcal{O}_{X,x}^{\ast}$, then  $p=1$ by construction. So $P\to k(x)$ is a log point. The fact that $P$ is sharp follows immediately from its definition. Since $N$ is the trivial log structure, we may take $Q=\{1\}$ and this proves that $f$ is sharp at $x$.

For (v), it suffices to note that the canonical map $P\oplus\mathcal{O}_{X,x}^{\ast}\to M_x$ is an isomorphism.
\end{proof}

The main case of interest in this paper is when $(Y,N)$ is a log point or the spectrum of discrete valuation ring with its canonical log structure. See \ref{exsharp} below for an example of a sharp morphism over the standard log point, and \ref{satz1} for a whole class of log schemes over a monogenic log point which are sharp.

\subsection{Example}\label{exsharp}
Suppose $X$ is a $k$-scheme with a geometric point $x\to X$ such that locally for the \'etale topology at $x$, $X$ is isomorphic to the spectrum of the ring
\[ \dfrac{k(x)[T_1,...,T_{d+1}]}{(\prod_{i=1}^rT_i^{e_i})} \]
for some $e_i\in\mathbb{N}$, with $T_i(x)=0$ for $i=1,...,d+1$. We can define a log structure in a neighbourhood $U$ of $x$ by the map
\[ \mathbb{N}^r\to\mathcal{O}_U:(n_1,..,n_r)\mapsto\prod_{i=1}^rT_i ^{n_i} \]
such that if we give $k$ the standard log point structure (\ref{setup2}), then the map
\[ \mathbb{N}\to\mathbb{N}^r:1\mapsto (e_1,...,e_r) \]
defines a morphism of log schemes $(U,\mathbb{N}^r)\to (k,\mathbb{N})$.

\begin{lemma}
The morphism $(U,\mathbb{N}^r)\to (k,\mathbb{N})$ is sharp at $x$.
\end{lemma}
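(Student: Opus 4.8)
The plan is to use the chart $\mathbb{N}\to\mathbb{N}^r$, $1\mapsto(e_1,\dots,e_r)$, already built in \ref{exsharp}, and simply to check the two conditions in the definition of sharpness (\ref{sharp}): that $\mathbb{N}\to k(y)$ and $\mathbb{N}^r\to k(x)$ are log points. Since both $\mathbb{N}$ and $\mathbb{N}^r$ are sharp monoids, the maps $q\mapsto 1$ for $q=1$ and $q\mapsto 0$ otherwise are well defined, so the only thing at stake is that the charts agree with these maps after composing to the residue fields.

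For the target there is nothing to do: $y=f(x)$ is the unique point of $\Spec k$, so $k(y)=k$, and by construction the chart $\mathbb{N}\to k$ is the standard log point of \ref{setup2}.

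For the source I would evaluate the composite $\mathbb{N}^r\to\mathcal{O}_U\to k(x)$ elementwise. The unit $(0,\dots,0)$ maps to $1$. Any non-unit $(n_1,\dots,n_r)$ has some $n_i>0$, so its image is $\prod_{i=1}^r T_i^{n_i}(x)$, and this vanishes because $T_i(x)=0$ for every $i\le r\le d+1$. Hence $\mathbb{N}^r\to k(x)$ is precisely the log point map, and $(k(x),\mathbb{N}^r)$ is a log point. Together with the previous paragraph this shows $f$ is sharp at $x$.

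The computation is routine, and the only point carrying real content is the vanishing $\prod_{i=1}^r T_i^{n_i}(x)=0$: it rests entirely on the hypothesis that all the $T_i$ defining the log structure vanish at $x$, which is exactly what forces every non-unit of $\mathbb{N}^r$ to have zero image in $k(x)$. That $\mathbb{N}\to\mathbb{N}^r$ is a genuine chart of $f$ — which uses the relation $\prod_{i=1}^r T_i^{e_i}=0$ in $\mathcal{O}_U$ to make the defining square commute — was already arranged in \ref{exsharp}, so nothing further is needed there.
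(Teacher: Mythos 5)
Your proof is correct and follows the same route as the paper: both verify that the chart $\mathbb{N}\to\mathbb{N}^r$ from \ref{exsharp} makes the defining square commute and that $\mathbb{N}^r\to k(x)$ is a log point because $T_i(x)=0$ kills every non-unit. You merely spell out the elementwise evaluation that the paper leaves implicit.
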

\begin{proof}
It suffices to check that the induced map $\mathbb{N}^r\to k(x)$ defines the structure of a log point and that the diagram
\[ \begin{CD}
\mathbb{N} @>>> \mathbb{N}^r \\
@VVV @VVV \\
k @>>> \mathcal{O}_{X,x}
\end{CD} \]
commutes. The latter is obvious and since $T_i(x)=0$ for all $i$, $(k(x),\mathbb{N}^r)$ is a log point.
\end{proof}

\subsection{Tame morphisms}\label{tame}
Let $f:(X,M)\to (Y,N)$ be a morphism of log schemes. We say that $f$ is \emph{tame} if the sheaf of abelian groups $M^{\gp}/(f^{\ast}N)^{\gp}$ has no torsion divisible by any of the residue characteristics of $Y$. (For any sheaf of monoids $M$ we write $M^{\gp}$ for the associated sheaf of abelian groups.)\\

The interest of tame morphisms is that they provide a large class of sharp morphisms over a monogenic log point, as the following result shows.

\begin{proposition}\label{satz1}
Suppose that $f:(X,M)\to (k,Q)$ is a morphism of log schemes with $(X,M)$ fine and saturated and $(k,Q)$ a monogenic log point. For any geometric point $x\to X$ there is a fine saturated and sharp monoid $P$ defining $M$ in a neighbourhood of $x$, such that:
\begin{enumerate}[(i)]
\item $P\to k(x)$ is a log point
\item if $f$ is tame, then there is a map $Q\to P$ defining a chart of $f$ at $x$.
\end{enumerate}
In particular, if $f$ is tame, then $f$ is sharp.
\end{proposition}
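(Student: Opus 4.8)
The plan is to construct $P$ by the device already used in \ref{sharplemma}(ii) and then to rigidify a chart of $f$ using tameness. First I would set $P:=M_x/\mathcal{O}_{X,x}^{\ast}$. As $(X,M)$ is fine and saturated, $P$ is fine, saturated and sharp, and $P^{\gp}$ is free abelian; this freeness lets me choose a monoid section $s:P\to M_x$ of the projection, splitting $M_x\cong P\oplus\mathcal{O}_{X,x}^{\ast}$ and giving a chart $P\to M$ that defines $M$ near $x$. Part (i) then follows verbatim as in \ref{sharplemma}(ii): for $p\neq 1$ in the sharp monoid $P$ one has $s(p)\notin\mathcal{O}_{X,x}^{\ast}$, so $\alpha(s(p))$ lies in the maximal ideal of $\mathcal{O}_{X,x}$ and maps to $0$ in $k(x)$; hence $(k(x),P)$ is a log point.

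The content is in (ii). Let $t$ be a generator of the monogenic monoid $Q$ and let $\beta:Q\to M_x$ be the map induced by $f$ on stalks. I would first observe that $\bar t:=[\beta(t)]\neq 1$ in $P$: since $t\neq 1$ the structure map $Q\to k$ sends $t$ to $0$, so by compatibility of the log structure maps $\alpha(\beta(t))=f^{\sharp}(0)=0$ in $\mathcal{O}_{X,x}$, whence $\beta(t)\notin\mathcal{O}_{X,x}^{\ast}$ and $\bar t\neq 1$. Now define the candidate chart $Q\to P$ by $t\mapsto\bar t$. The only obstruction to this being a chart of $f$ is the unique unit $u\in\mathcal{O}_{X,x}^{\ast}$ with $\beta(t)=s(\bar t)\cdot u$, which must be removed.

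To remove it I would replace $s$ by a twisted section $s':=s\cdot\chi$ for a homomorphism $\chi:P^{\gp}\to\mathcal{O}_{X,x}^{\ast}$; such a twist keeps $s'$ a section defining the same log structure, and the square defining a chart of $f$ commutes precisely when $\chi(\bar t)=u$. Writing $\bar t=m\,v$ with $v$ primitive in the free group $P^{\gp}$, the integer $m$ is the order of the torsion subgroup of $P^{\gp}/\langle\bar t\rangle$, which is the torsion in the stalk of $M^{\gp}/(f^{\ast}N)^{\gp}$, with $N$ the log structure of $(k,Q)$. Tameness of $f$ forces this torsion to be prime to every residue characteristic, so $m$ is invertible in the strictly henselian local ring $\mathcal{O}_{X,x}$ and $u$ admits an $m$-th root $w$ there. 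Setting $\chi(v):=w$ and $\chi:=1$ on a complementary basis gives $\chi(\bar t)=w^{m}=u$, so $t\mapsto\bar t$ is a chart of $f$ at $x$ and (ii) holds. Finally, (i), (ii) and the hypothesis that $(k,Q)$ is a log point show that $f$ is sharp at $x$ in the sense of \ref{sharp}; as $x$ is arbitrary, $f$ is sharp.

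The step I expect to be the main obstacle is solving $\chi(\bar t)=u$ inside $\Hom(P^{\gp},\mathcal{O}_{X,x}^{\ast})$, i.e. extracting an $m$-th root of $u$. This is the only place where both hypotheses are genuinely used: monogenicity of $Q$ reduces the extension problem to the single element $\bar t$, hence to a single root, while tameness forces $m$ to be prime to the residue characteristics so that the root exists in the strictly henselian stalk $\mathcal{O}_{X,x}$.
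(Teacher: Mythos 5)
Your proposal is correct and follows essentially the same route as the paper: take $P=M_x/\mathcal{O}_{X,x}^{\ast}$, split off a section, and use tameness to extract a root of the unit obstruction so the section can be twisted to absorb it (the paper phrases the twist via the gcd $n$ of the coordinates of $\bar t$ in a basis of $P^{\gp}$ and B\'ezout coefficients $a_i$, setting $s'(p_i)=v^{a_i}s(p_i)$, which is exactly your character $\chi$ with $\chi(\bar t)=u$). Your identification of $m$ with the order of the torsion of $M^{\gp}/(f^{\ast}N)^{\gp}$ at $x$, and the Hensel argument for the $m$-th root, are the same points the paper uses, only spelled out slightly more explicitly.
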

\begin{proof}
We let $P=M_x/\mathcal{O}_{X,x}^*$ and then (i) can be proved as in \ref{sharplemma} (ii).

For (ii), let $q\in Q$ be a generator. Let $p_1,...,p_r$ be a set of generators of $P^{\gp}\cong\mathbb{Z}^r$ and fix a section $s:P^{\gp}\to M_x^{\gp}$. Since $M_x^{\gp}=P^{\gp}\oplus\mathcal{O}_{X,x}^{\ast}$, we may write $q=u\prod_{i=1}^rs(p_i)^{n_i}$ for some $u\in\mathcal{O}_{X,x}^{\ast}$. Let $n=(n_1,...,n_r)$ be the greatest common divisor. Since $f$ is tame, $n$ is prime to $\cha(k)$. Let $v\in\mathcal{O}_{X,x}^*$ satisfy $v^n=u$. There are integers $a_1,...,a_r$ such that $\sum_{i=1}^ra_in_i=n$. Then we have $q=\prod_{i=1}^r(v^{a_i}s(p_i))^{n_i}$. The section $s':P^{\gp}\to M_{x}^{\gp}$ defined by $s'(p_i):= v^{a_i}s(p_i)$ is such that $q$ lies in $s'(P^{\gp})\cap M_x=s'(P)$. The induced map $Q\to P$ is the desired chart.
\end{proof}

See Example \ref{cx} for an example of a non-tame log smooth scheme over the standard log point which is not sharp.

\subsection{Log smooth morphisms}\label{logsmoothdef}
Recall that a morphism $f:(X,M)\to (Y,N)$ of fine log schemes is \emph{log smooth} if and only if, locally for the \'etale topology, there exists a chart $Q\to P$ of $f$ with $P$ and $Q$ fine monoids such that
\begin{enumerate}[(a)]
\item $\ker(Q^{\gp}\to P^{\gp})$ and the torsion subgroup of $\cok(Q^{\gp}\to P^{\gp})$ are finite groups of order invertible on $Y$
\item the induced map $X\to Y_Q[P]$ is \'etale.
\end{enumerate}
$f$ is \emph{log \'etale} if, in addition, $\cok(Q^{\gp}\to P^{\gp})$ is a finite group of order invertible on $Y$.

\begin{remark} In spite of the above characterization, log smooth morphisms are not necessarily tame (see \ref{cx} for an example).
\end{remark}

\subsection{Log differentials}
Let $f:(X,M)\to (Y,N)$ be a morphism of fine log schemes. We write $\omega^1_{(X,M)/(Y/N)}$ for the module of relative logarithmic differentials.

\subsubsection{}\label{diff}
Given any morphism $g:(Y,N)\to (Z,L)$ of fine log schemes there is a right-exact sequence of sheaves of relative logarithmic differentials
\[ \begin{CD}
0 @>>> f^{\ast}\omega^1_{(Y,N)/(Z,L)} @>>> \omega^1_{(X,M)/(Z,L)} @>>> \omega^1_{(X,M)/(Y,N)} @>>> 0
\end{CD} \]
and $f$ is log smooth if $g\circ f$ is log smooth and the sequence is left-exact and locally split (\cite[3.12]{log}).

\subsubsection{}\label{jacobi}
Let $i:Z\hookrightarrow X$ be a closed immersion of ideal sheaf $\mathcal{I}$ and give $Z$ the inverse image log structure of $(X,M)$ (making $i$ a strict closed immersion). Then there is a right-exact sequence
\[ \begin{CD}
0 @>>>\mathcal{I}/\mathcal{I}^2 @>>> i^{\ast}\omega^1_{(X,M)/(Y,N)} @>>> \omega^1_{(Z,i^{\ast}M)/(Y,N)} @>>> 0
\end{CD} \]
and if $f$ is log smooth, then $f\circ i$ is log smooth if and only if the sequence is left-exact and locally split. See \cite[IV, 3.2.2]{logbook} for the proof.

\subsection{Generic log smoothness}\label{generic}
Let $f:(X,M)\to (Y,N)$ be a morphism of finite presentation of fine log schemes with $Y$ irreducible. Assume moreover there is a chart $Q\to N$ with $Q$ a fine monoid. Let $\xi\in Y$ be the generic point, endowed with the inverse image log structure of $(Y,N)$. We claim that if $f^{-1}(\xi)$ is log smooth over $\xi$, then there exists a dense open $V\subset Y$ such that $f|_V$ is log smooth. We may assume $Y$ is quasi-compact. For each geometric point $x\to f^{-1}(\xi)$ there is an \'etale neighbourhood $Z^x\to f^{-1}(\xi)$ of $x$, a chart $P^x\to M|_{Z^x}$ with $P^x$ a fine monoid, and a map $Q\to P^x$ such that $Z^x\to \xi_Q[P^x]$ is \'etale and the map $Q\to P^x$ satisfies property (a) of \ref{logsmoothdef}. Moreover, since $M$ is fine, $P^x$ defines the log structure of $X$ in an \'etale neighbourhood $X^x$ of $x$. Since $f$ is of finite presentation, up to replacing $X^x$ by an \'etale neighbourhood of $x$, there is an open neighbourhood $Y^x$ of $\xi$ in $Y$ and a morphism of finite presentation $X^x\to (Y^x)_Q[P^x]$. Namely, we may take $Y^x$ to be a dense open subset of the image of $X^x\to X\overset{f}{\to}Y$, up to replacing $X^x$ by $X^x\times_YY^x$. Since this morphism becomes \'etale at the point $\xi\in Y^x$, by \cite[IV, 17.7.8]{ega} there is a dense open $U^x\subset Y^x$ such that the map $X^x\times_YU^x\to (U^x)_Q[P^x]$ is \'etale. Now since $f$ is of finite presentation, there is a finite number of points $x_1,...,x_n\in f^{-1}(\xi)$ such that the map $\coprod_{i=1}^nX^{x_i}\times_Y\xi\to f^{-1}(\xi)$ is an \'etale covering. If $U:=U^{x_1}\cap U^{x_2}\cap\cdots\cap U^{x_n}$, then there is a dense open $V\subset U$ such that $\coprod_{i=1}^nX^{x_i}\times_YV\to X\times_YV$ is an \'etale covering, and this proves the claim.

\subsection{Notation}
In the sequel we will usually drop the log structures from notation when it is clear which log structures are meant. Also we will always write $\omega^1_{X/Y}$ for logarithmic differentials, as opposed to the usual differentials $\Omega^1_{X/Y}$.

\section{Hyperplane sections}

Let $k$ be a field.

\subsection{Affine hyperplane sections}\label{setup}
Let $X$ be a $k$-scheme of finite type and $f:X\to\mathbb{A}^n_k$ a morphism of $k$-schemes defined by global sections $f_1,...,f_n\in\Gamma(X,\mathcal{O}_X)$. Let
\[ Z:=\underline{\Spec}_X\left(\mathcal{O}_X[U_0,...,U_n]/(U_0+U_1f_1+U_2f_2+...+U_nf_n)\right). \]
Then we have a projection morphism $q:Z\to\mathbb{A}^{n+1}_k$ whose fibre over a point $u=(u_0,...,u_n)$ is the fibre $f^{-1}(H_u)$, where $H_u$ is the hyperplane of equation $u_0+u_1T_1+u_2T_2+...+u_nT_n=0$ where $T_1,...,T_n$ are the global coordinates on $\mathbb{A}^n_k$ with images $f_1,...,f_n$ respectively in $\Gamma(X,\mathcal{O}_X)$.

There is a natural isomorphism
\[ Z\cong X\times_k\mathbb{A}^n_k \]
obtained by the map
\begin{eqnarray*}
\mathcal{O}_X[U_0,...,U_n]/(U_0+U_1f_1+...+U_nf_n) &\to &\mathcal{O}_X[U_1,...,U_n]\\
U_0 &\mapsto& -\sum_{i=1}^nU_if_i
\end{eqnarray*}
so that $Z$ can viewed as trivial vector bundle on $X$.

Let $\mathbb{K}$ be the function field of $\mathbb{A}^{n+1}_k$. We write $Z_\mathbb{K}$ for the generic fibre of $q:Z\to\mathbb{A}^{n+1}_k$, $X_\mathbb{K}:=X\otimes_k\mathbb{K}$, and $i_\mathbb{K}:Z_\mathbb{K}\hookrightarrow X_\mathbb{K}$ for the closed immersion deduced from $i$. Note that by \cite[I, 6.3.18]{bertini} $q$ is a dominant morphism if and only if $\dim\overline{f(X)}>0$, hence $Z_{\mathbb{K}}$ is non-empty if and only if $\dim\overline{f(X)}>0$.

We refer to \cite[I, 6]{bertini} for further details.

\subsection{Bertini theorem for log smoothness}
Let $(k,Q)$ be a log point, with $Q$ a fine monoid. We endow $\mathbb{A}^{n+1}_k$ with the inverse image log structure of $(k,Q)$ via the canonical morphism $\mathbb{A}^{n+1}_k\to\Spec(k)$. If $(X,M)\to (k,Q)$ is a morphism of log schemes, then we have the fibre product $X\times_{k}\mathbb{A}^{n+1}_k$ in the category of log schemes (the log structure is just the inverse image of $M$ under the projection $X\times_{k}\mathbb{A}^{n+1}_k\to X$). Finally, we give $i:Z\hookrightarrow X\times_{k}\mathbb{A}^{n+1}_k$ the inverse image log structure.

Endow $\mathbb{K}$ with the inverse image log structure of $\mathbb{A}^{n+1}_k$, $X_{\mathbb{K}}$ the inverse image log structure of $X\times_k\mathbb{A}^{n+1}_k$, and $i_\mathbb{K}:Z_\mathbb{K}\hookrightarrow X_\mathbb{K}$ with the inverse image log structure of $X_{\mathbb{K}}$.

\begin{theorem}\label{main}
With the above notation and hypothesis.  Assume that $(X,M)$ is a sharp log $(k,Q)$-scheme. If $f$ is unramified, then for any $z\in Z_\mathbb{K}$ the sequence
\[ \begin{CD}
0 @>>> k(z) @>{1\mapsto \sum_{i=1}^nU_idf_i}>> i_\mathbb{K}^{\ast}\omega^1_{X_\mathbb{K}/\mathbb{K}}\otimes_{\mathcal{O}_{Z_{\mathbb{K}}}}k(z) @>>> \omega^1_{Z_\mathbb{K}/\mathbb{K}}\otimes_{\mathcal{O}_{Z_{\mathbb{K}}}}k(z) @>>> 0
\end{CD} \]
is exact.
\end{theorem}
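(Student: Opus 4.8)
The plan is to recognise the displayed sequence as the fibre at $z$ of the logarithmic conormal sequence of \ref{jacobi}. Since $Z_\mathbb{K}$ is the zero locus of the single function $g=U_0+\sum_{i=1}^nU_if_i$ and $i_\mathbb{K}$ is strict, the conormal sheaf $\mathcal{I}/\mathcal{I}^2$ is the invertible $\mathcal{O}_{Z_\mathbb{K}}$-module generated by $g$, and because $U_0,\dots,U_n$ lie in the base field $\mathbb{K}$ one has $dg=\sum_{i=1}^nU_idf_i$; this is exactly the first map after tensoring with $k(z)$. The sequence of \ref{jacobi} is right-exact, and $-\otimes_{\mathcal{O}_{Z_\mathbb{K}}}k(z)$ is right-exact, so exactness at the two right-hand terms of the displayed sequence is automatic. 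Hence everything reduces to proving that the first map is injective, i.e. that
\[ \sum_{i=1}^nU_idf_i\neq 0 \quad\text{in}\quad i_\mathbb{K}^{\ast}\omega^1_{X_\mathbb{K}/\mathbb{K}}\otimes_{\mathcal{O}_{Z_\mathbb{K}}}k(z)\qquad\text{for every }z\in Z_\mathbb{K}. \]

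The next step is to trade the logarithmic differentials for the ordinary ones. The $f_i$ are honest sections of $\mathcal{O}_X$, so $\sum U_idf_i$ is the image, under the canonical map $\Omega^1\to\omega^1$, of the corresponding element built from ordinary differentials. This is where sharpness of $(X,M)\to(k,Q)$ enters: by \ref{satz2} the ordinary differentials inject into the logarithmic ones, so a relation $\sum U_idf_i=0$ in $\omega^1$ is already a relation in $\Omega^1$. It therefore suffices to prove the nonvanishing of $\sum U_idf_i$ in the ordinary fibre $\Omega^1_{X_\mathbb{K}/\mathbb{K}}\otimes_{\mathcal{O}_{X_\mathbb{K}}}k(z)$, and the problem becomes log-free.

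For the ordinary statement I would run Jouanolou's transcendence-degree argument. Let $y\in X$ be the image of $z$, put $\ell=k(y)$, $\delta=\td_k\ell$, and identify $\Omega^1_{X_\mathbb{K}/\mathbb{K}}\otimes k(z)$ with $W\otimes_\ell k(z)$, where $W=\Omega^1_{X/k}\otimes_{\mathcal{O}_X}\ell$; the element in question then becomes $\sum U_iw_i$ with $w_i=df_i(y)\in W$ and $U_i\in\mathbb{K}\subseteq k(z)$. Because $f$ is unramified the map $f^{\ast}\Omega^1_{\mathbb{A}^n/k}\to\Omega^1_{X/k}$ is surjective, so the $w_i$ generate $W$. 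Suppose $\sum U_iw_i=0$ and set $r=\dim_\ell W$. Then $(U_1,\dots,U_n)$ lies in $(\ker A)\otimes_\ell k(z)$ for $A\colon\ell^n\twoheadrightarrow W$, which forces $r$ independent $\ell$-linear relations among $U_1,\dots,U_n$; together with the tautological relation $U_0+\sum f_i(y)U_i=0$ these give $r+1$ independent $\ell$-linear relations among $U_0,\dots,U_n$, whence $\td_\ell\ell(U_0,\dots,U_n)\leq n-r$. On the other hand $z\in Z_\mathbb{K}$ means the image of $z$ in $\mathbb{A}^{n+1}_k$ is the generic point, i.e. $U_0,\dots,U_n$ are algebraically independent over $k$, so $\td_\ell\ell(U_0,\dots,U_n)\geq(n+1)-\delta$; combining gives $r\leq\delta-1$. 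But $W$ surjects onto $\Omega^1_{\ell/k}$, so $r\geq\dim_\ell\Omega^1_{\ell/k}\geq\td_k\ell=\delta$, a contradiction. This would establish $\sum U_iw_i\neq 0$ and finish the proof.

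I expect the main obstacle to be this transcendence-degree bookkeeping, which must be made to work at \emph{every} point $z$ and not merely the generic one; the twin inequalities $r\leq\delta-1$ and $r\geq\delta$ are what force the nonvanishing, and the unramifiedness hypothesis is used precisely to produce the relations underlying the first bound. A secondary delicate point is the reduction of the second paragraph: the element $\sum U_idf_i$ is a conormal direction and hence dies in the residue-field differentials, so the comparison of \ref{satz2} has to be applied at the level of the fibres — via the two fundamental exact sequences for $z\in X_\mathbb{K}$ — rather than naively at $k(z)$.
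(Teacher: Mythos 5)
Your overall strategy is the paper's: reduce exactness of the displayed sequence to injectivity of the conormal map on fibres, use sharpness together with Lemma \ref{satz2} to trade logarithmic differentials for ordinary ones, and conclude by Jouanolou's transcendence-degree count. The count itself is sound (your version, using only the inequality $\dim_\ell\Omega^1_{\ell/k}\geq\td(\ell/k)$, is even slightly leaner than the paper's, which first reduces to $k$ algebraically closed). But the reduction step has a genuine gap. You run the transcendence argument in $W\otimes_\ell k(z)$ with $W=\Omega^1_{X/k}\otimes_{\mathcal{O}_X}\ell$, the \emph{fibre} of the sheaf of ordinary differentials at the image point $x$; to feed the hypothesis $\sum_i U_idf_i=0$ in $\omega^1_{X_\mathbb{K}/\mathbb{K}}\otimes k(z)$ into that argument you need the map $W\to\omega^1_{X/k}\otimes_{\mathcal{O}_X}\ell$ to be injective. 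Lemma \ref{satz2} does not give this: it gives injectivity of $\Omega^1_{\ell/k}\to\omega^1_{\ell/k}$, the residue-field differentials, which are only quotients of these fibres. The fibre-level map genuinely fails to be injective in exactly the situations the theorem is about: already for the log structure generated by a coordinate $t$ one has $dt=t\,\dlog(t)$, so $dt$ dies in the fibre of $\Omega^1\to\omega^1$ at a point where $t=0$. Your closing remark shows you sensed the difficulty, but the fix you propose (``apply \ref{satz2} at the level of the fibres'') points at precisely the map that is not injective.

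The correct repair --- and it is what the paper does --- goes the other way: push the relation from $\omega^1_{X/k}\otimes_{\mathcal{O}_X}k(z)$ into its \emph{quotient} $\omega^1_{k(x)/k}\otimes_{k(x)}k(z)$ (a relation survives passage to a quotient), then invoke \ref{satz2} for the extension of log points $(k,Q)\to(k(\bar x),P)$ furnished by sharpness at a geometric point $\bar x$ over $x$ (and descend to $k(x)$) to conclude that $\sum_i U_idf_i=0$ already holds in $\Omega^1_{k(x)/k}\otimes k(z)$. Your transcendence count then goes through verbatim with $\Omega^1_{k(x)/k}$ in place of $W$: the $df_i$ still generate it, being images of generators of $W$ under the canonical surjection $W\to\Omega^1_{k(x)/k}$, and its dimension is still at least $\td(k(x)/k)$. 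So the gap is local and repairable, but as written the second paragraph of your argument does not deliver the statement that the third paragraph consumes.
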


This theorem will be proved below. From it one derives the Bertini theorem for log smoothness.

\begin{corollary}\label{logbertinigeneric}
With notation and hypothesis as in \ref{main}. If $X$ is log smooth over $(k,Q)$, then $Z_\mathbb{K}$ is log smooth over $(\mathbb{K},Q)$.
\end{corollary}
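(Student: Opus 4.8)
The plan is to deduce the corollary formally from Theorem \ref{main} by feeding the exact sequence it produces into the log smoothness criterion for strict closed immersions recorded in \ref{jacobi}. First I would record the base change step: since log smoothness is stable under base change in the category of fine log schemes, the hypothesis that $(X,M)$ is log smooth over $(k,Q)$ implies that $X_\mathbb{K}$, with its inverse image log structure, is log smooth over $(\mathbb{K},Q)$, the log structure on $\mathbb{K}$ being pulled back from $(k,Q)$ along $\mathbb{A}^{n+1}_k$ as in \ref{setup}. In particular $\omega^1_{X_\mathbb{K}/\mathbb{K}}$ is locally free, and hence so is its pullback $i_\mathbb{K}^*\omega^1_{X_\mathbb{K}/\mathbb{K}}$ along the strict closed immersion $i_\mathbb{K}$.

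Next I would set up the conormal sequence for $i_\mathbb{K}:Z_\mathbb{K}\hookrightarrow X_\mathbb{K}$. By construction $Z_\mathbb{K}$ is cut out inside $X_\mathbb{K}$ by the single equation $g:=U_0+\sum_{i=1}^nU_if_i$, where the $U_i\in\mathbb{K}$ are transcendental over $k$. Because $U_0$ is algebraically independent, $g$ is a non-zero-divisor on $X_\mathbb{K}$, so that $Z_\mathbb{K}$ is an effective Cartier divisor whenever it is non-empty (if it is empty the assertion is vacuous). Consequently the conormal sheaf $\mathcal{I}/\mathcal{I}^2$ is an invertible sheaf on $Z_\mathbb{K}$, trivialized by the class of $g$, and the right-exact sequence of \ref{jacobi} reads
\[ \mathcal{I}/\mathcal{I}^2 \to i_\mathbb{K}^*\omega^1_{X_\mathbb{K}/\mathbb{K}} \to \omega^1_{Z_\mathbb{K}/\mathbb{K}} \to 0, \]
its first arrow carrying the class of $g$ to $dg=\sum_{i=1}^nU_idf_i$ (the coordinates $U_i$ being constants in $\mathbb{K}$).

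Now Theorem \ref{main} asserts precisely that for every $z\in Z_\mathbb{K}$ the fibre of this first arrow, namely $k(z)\to i_\mathbb{K}^*\omega^1_{X_\mathbb{K}/\mathbb{K}}\otimes_{\mathcal{O}_{Z_\mathbb{K}}}k(z)$ sending $1\mapsto\sum_{i=1}^nU_idf_i$, is injective. A morphism from an invertible sheaf to a locally free sheaf that is injective on every fibre is a locally split subbundle inclusion with locally free cokernel. Applying this to the first arrow shows that the conormal sequence is left-exact and locally split and that $\omega^1_{Z_\mathbb{K}/\mathbb{K}}$ is locally free. By the criterion of \ref{jacobi}, and because $X_\mathbb{K}$ is already known to be log smooth over $(\mathbb{K},Q)$, this yields the log smoothness of $Z_\mathbb{K}$ over $(\mathbb{K},Q)$.

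Since the substantive work is packaged into Theorem \ref{main}, I expect the only points requiring genuine care to be the two inputs to the final argument: the verification that the defining equation $g$ is a non-zero-divisor, so that $\mathcal{I}/\mathcal{I}^2$ is genuinely invertible, and the standard linear-algebra step upgrading fibrewise injectivity to a locally split inclusion, which relies on this invertibility. Neither is deep, but the latter does fail for a fibrewise-injective map out of a sheaf that is not locally free, so the line-bundle property of $\mathcal{I}/\mathcal{I}^2$ is the hypothesis one must not omit.
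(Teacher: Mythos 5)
Your proof is correct, and it follows the same overall strategy as the paper --- feed the fibrewise injectivity of Theorem \ref{main} into the criterion of \ref{jacobi} --- but the step upgrading fibrewise injectivity to a locally split left-exact conormal sequence is carried out by a genuinely different mechanism. The paper never verifies in advance that $\mathcal{I}/\mathcal{I}^2$ is invertible: writing the image of the conormal map at $z$ as $\mathcal{O}_{Z_{\mathbb{K}},z}/I$, it uses \ref{main} together with the local freeness of $i_{\mathbb{K}}^{\ast}\omega^1_{X_{\mathbb{K}}/\mathbb{K}}$ to conclude $\mathrm{Tor}_1^{\mathcal{O}_{Z_{\mathbb{K}},z}}((\omega^1_{Z_{\mathbb{K}}/\mathbb{K}})_z,k(z))=0$, deduces that $(\omega^1_{Z_{\mathbb{K}}/\mathbb{K}})_z$ and hence $\mathcal{O}_{Z_{\mathbb{K}},z}/I$ are flat, and then gets $I=0$ by tensoring $0\to I\to\mathcal{O}_{Z_{\mathbb{K}},z}\to\mathcal{O}_{Z_{\mathbb{K}},z}/I\to 0$ with $k(z)$ and applying Nakayama; the invertibility of the conormal sheaf falls out as a byproduct. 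You instead establish that invertibility up front by checking that $g=U_0+\sum_{i=1}^nU_if_i$ is a non-zero-divisor on $\mathcal{O}_X\otimes_k\mathbb{K}$ --- which is correct: for $h\in\mathcal{O}_X[U_0,...,U_n]$ the top-degree coefficient in $U_0$ of $hg$ equals the top-degree coefficient of $h$, so $hg=0$ forces $h=0$, and the property persists after localizing at $k[U_0,...,U_n]\setminus\{0\}$ --- and then invoke the elementary fact that a fibrewise-injective map from a line bundle to a vector bundle is a split subbundle inclusion with locally free cokernel. Your route costs the extra non-zero-divisor verification, which you rightly flag as the point requiring care, but buys a more transparent linear-algebra finish; the paper's route avoids that verification at the price of a slightly less direct flatness argument. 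Both arguments are complete and both correctly use base change to ensure $X_{\mathbb{K}}$ is log smooth over $(\mathbb{K},Q)$ before applying \ref{jacobi}.
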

\begin{proof}
Choose $z\in Z_{\mathbb{K}}$. By \ref{main} there is an ideal $I\subsetneq\mathcal{O}_{Z_{\mathbb{K}},z}$ such that the sequence
\[ \begin{CD}
0 @>>> \mathcal{O}_{Z_{\mathbb{K}},z}/I @>{1\mapsto \sum_{i=1}^nU_idf_i}>> (i_\mathbb{K}^{\ast}\omega^1_{X_\mathbb{K}/\mathbb{K}})_z @>>> (\omega^1_{Z_{\mathbb{K}}/\mathbb{K}})_z @>>> 0
\end{CD} \]
is exact. Since $\omega^1_{X_\mathbb{K}/\mathbb{K}}\cong\omega^1_{X/k}\otimes_k\mathbb{K}$ is locally free, \ref{main} implies that $\text{Tor}_1^{\mathcal{O}_{Z_{\mathbb{K}},z}}((\omega^1_{Z_{\mathbb{K}}/\mathbb{K}})_z,k(z))=0$, i.e. $(\omega^1_{Z_{\mathbb{K}}/\mathbb{K}})_z$ is flat, hence so is $\mathcal{O}_{Z_{\mathbb{K}},z}/I$. Thus, tensoring the exact sequence
\[ 0\to I\to \mathcal{O}_{Z_{\mathbb{K}},z}\to \mathcal{O}_{Z_{\mathbb{K}},z}/I\to 0 \]
with $k(z)$ we get an exact sequence
\[ 0\to I\otimes_{\mathcal{O}_{Z_{\mathbb{K}},z}}k(z)\to k(z)\to k(z)\to 0 \]
and therefore $I\otimes_{\mathcal{O}_{Z_{\mathbb{K}},z}}k(z)=0$, whence $I=0$ by Nakayama's lemma. It now follows from \ref{jacobi} that $Z_{\mathbb{K}}$ is log smooth over $(\mathbb{K},Q)$ at $z$.
\end{proof}

\begin{corollary}\label{logbertini}
With notation and hypothesis as in \ref{main}. If $X$ is quasi-compact and $(X,M)\to (k,Q)$ is log smooth, then there is a dense open $V\subset \mathbb{A}^{n+1}_k$ such that $q^{-1}(V)\to V$ is log smooth over $(k,Q)$.
\end{corollary}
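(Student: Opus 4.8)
The plan is to deduce this spreading-out statement from the generic log smoothness of Corollary \ref{logbertinigeneric} together with the general principle recorded in \ref{generic}. Indeed, \ref{logbertinigeneric} asserts precisely that the generic fibre of $q$ is log smooth over the generic point, and \ref{generic} is designed exactly to propagate such a fibrewise statement to a dense open of the base. So the corollary should follow formally once the two inputs are aligned.

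First I would fix the morphism to which \ref{generic} is applied, namely $q:(Z,M_Z)\to(\mathbb{A}^{n+1}_k,N)$, where $Z$ carries the inverse image log structure $M_Z$ of $M$ and $\mathbb{A}^{n+1}_k$ carries the inverse image log structure $N$ of $(k,Q)$. I would then check the hypotheses of \ref{generic} in turn: $\mathbb{A}^{n+1}_k$ is irreducible; the canonical map $Q\to N$ is a chart with $Q$ fine; and both $(Z,M_Z)$ and $(\mathbb{A}^{n+1}_k,N)$ are fine. The one point deserving a word is finite presentation of $q$: since $X$ is quasi-compact and of finite type over the field $k$, the scheme $Z\cong X\times_k\mathbb{A}^n_k$ is of finite type over $k$ and hence Noetherian, so the finite-type morphism $q$ is automatically of finite presentation. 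This is exactly where the quasi-compactness assumption on $X$ enters: it upgrades local finiteness to finite presentation and guarantees that the generic fibre $Z_\mathbb{K}$ is quasi-compact, which is what the covering argument inside \ref{generic} requires.

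Next I would identify the generic fibre with the object studied in \ref{setup}. The generic point $\xi$ of $\mathbb{A}^{n+1}_k$ has residue field $\mathbb{K}$, and its inverse image log structure is the log point $(\mathbb{K},Q)$; the fibre $q^{-1}(\xi)$ endowed with its inverse image log structure is precisely $Z_\mathbb{K}$. By Corollary \ref{logbertinigeneric}, $Z_\mathbb{K}$ is log smooth over $(\mathbb{K},Q)$, so the hypothesis of \ref{generic} that the fibre over the generic point be log smooth holds. Applying \ref{generic} then yields a dense open $V\subset\mathbb{A}^{n+1}_k$ with $q|_{q^{-1}(V)}:q^{-1}(V)\to V$ log smooth. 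Finally, since the structure morphism $(\mathbb{A}^{n+1}_k,N)\to(k,Q)$ is strict and smooth, hence log smooth, and restriction to the open $V$ preserves this, the result is exhibited as log smooth over $(k,Q)$.

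I expect no genuine obstacle here: with \ref{logbertinigeneric} and \ref{generic} in hand the corollary is essentially a formal consequence. The only steps demanding care are the bookkeeping of the various inverse image log structures — in particular confirming that $q^{-1}(\xi)$ with its log structure really is $Z_\mathbb{K}$ over $(\mathbb{K},Q)$ — and the verification that $q$ is of finite presentation, which is the precise role of the quasi-compactness hypothesis.
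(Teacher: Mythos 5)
Your proposal is correct and follows exactly the paper's route: the paper's proof is the one-line "This follows from \ref{generic}," which implicitly combines \ref{logbertinigeneric} (log smoothness of the generic fibre $Z_\mathbb{K}$) with the spreading-out principle of \ref{generic}, precisely as you do. Your additional verifications of the hypotheses of \ref{generic} (finite presentation via quasi-compactness, the chart, the identification of $q^{-1}(\xi)$ with $Z_\mathbb{K}$) are just the details the paper leaves implicit.
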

\begin{proof}
This follows from \ref{generic}.
\end{proof}

\begin{proof}[Proof of \ref{main}]
The proof is in the style of \cite[I, 6.3]{bertini}. We may assume that $k$ is algebraically closed. We first show a lemma.

\begin{lemma}\label{satz2}
Let $(k,Q)\to (L,P)$ be an extension of log points. Then the natural map
\[ \Omega^1_{L/k}\to\omega^1_{L/k} \]
has a section, in particular is injective.
\end{lemma}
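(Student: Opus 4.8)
The plan is to construct an explicit $L$-linear splitting of the natural map $\Omega^1_{L/k}\to\omega^1_{L/k}$ by means of the universal property of logarithmic differentials, exploiting crucially that we work over log \emph{points}. First recall that by \ref{sharplemma}(i) the monoid $P$ is sharp, so the structure map $\alpha\colon P\to L$ sends every non-unit to $0$ and is an isomorphism onto units; consequently the log structure $M$ associated to the chart $P\to L$ satisfies $M^{\gp}\cong P^{\gp}\oplus L^{\ast}$, with $\alpha(p,u)=0$ for $p\neq 1$ and $\alpha(1,u)=u$, and likewise $N^{\gp}\cong Q^{\gp}\oplus k^{\ast}$ for the base. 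The universal property of $\omega^1_{L/k}=\omega^1_{(L,M)/(k,N)}$ says that giving an $L$-linear map $\omega^1_{L/k}\to E$ is the same as giving a pair $(D,\delta)$, where $D\colon L\to E$ is a $k$-derivation and $\delta\colon M^{\gp}\to E$ is a homomorphism, subject to $D(\alpha(m))=\alpha(m)\,\delta(m)$ for all $m\in M$ and $\delta(m)=0$ for $m$ in the image of $N^{\gp}$. Since the natural map is induced by the universal derivation $d$ of $\Omega^1_{L/k}$, it suffices to produce such a pair with $E=\Omega^1_{L/k}$ and $D=d$.

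Next I would define $\delta\colon M^{\gp}=P^{\gp}\oplus L^{\ast}\to\Omega^1_{L/k}$ by $\delta(p,u)=du/u$; that is, $\delta$ kills the monoid part $P^{\gp}$ entirely and sends a unit $u\in L^{\ast}$ to $\dlog u=du/u$. This is plainly a group homomorphism. The resulting $L$-linear map $s\colon\omega^1_{L/k}\to\Omega^1_{L/k}$ is then a section of the natural map, because it sends the class of $d\ell$ back to $D(\ell)=d\ell$; the stated injectivity follows at once.

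The heart of the matter is the verification of the compatibility $D(\alpha(m))=\alpha(m)\,\delta(m)$, which is precisely where the log-point hypothesis enters. For a non-unit $m=(p,u)$ with $p\neq 1$ we have $\alpha(m)=0$, so \emph{both} sides vanish and the relation is vacuous; this is exactly what legitimizes the choice $\delta|_{P^{\gp}}=0$, even though $\dlog(p)$ is in general a nonzero element of $\omega^1_{L/k}$. For a unit $m=(1,u)$ the relation reads $du=u\cdot(du/u)$, which holds. Finally $\delta$ vanishes on the image of $N^{\gp}$: the $Q^{\gp}$-part lands in $P^{\gp}$, where $\delta=0$ by construction, and for $u\in k^{\ast}$ one has $du/u=0$ in $\Omega^1_{L/k}$.

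I do not anticipate a serious obstacle beyond bookkeeping. Once the decomposition $M^{\gp}\cong P^{\gp}\oplus L^{\ast}$ is in place — which needs only the sharpness furnished by \ref{sharplemma}(i) — defining $\delta$ separately on the two factors is unambiguous, and every condition of the universal property follows by direct inspection. The one conceptual point to isolate is that the triviality of the log-point structure on non-units is what decouples $\dlog(P^{\gp})$ from the derivation relation, allowing it to be sent to zero; this is the feature that fails for a general log smooth base and is what the sharpness assumption is buying.
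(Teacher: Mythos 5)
Your proof is correct and follows essentially the same route as the paper: both construct the section from the log derivation $(d,\delta)$ with $\delta(p,u)=u^{-1}du$ on $M=P\oplus L^{\ast}$ killing the monoid part, invoke the universal property of $\omega^1_{L/k}$, and observe that the compatibility $D(\alpha(m))=\alpha(m)\delta(m)$ is vacuous on non-units precisely because the log-point structure map sends them to $0$. The only cosmetic difference is that you route the decomposition $M\cong P\oplus L^{\ast}$ through \ref{sharplemma}(i), whereas it already follows directly from the definition of a log point in \ref{setup2}.
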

\begin{proof}
Let $N=Q\oplus k^{\ast}$ and $M=P\oplus L^{\ast}$ be the log structures. Note that if $(p,u)\in P\oplus L^{\ast}$ then the image of $(p,u)$ in $L$ is 0 unless $p=1$. Let $d:L\to\Omega^1_{L/k}$ be the exterior derivative and let $\partial:M\to\Omega^1_{L/k}$ be the homomorphism of monoids $M=P\oplus L^{\ast}\to\Omega^1_{L/k}:(p,u)\mapsto u^{-1}du$. Since the map $N\to M$ maps $Q$ to $P$, one checks easily that the pair $(d,\partial)$ forms a log derivation of $(L,M)$ to $\Omega^1_{L/k}$ over $(k,N)$ (\cite[5.1]{logsmooth}). So, by \cite[5.3]{logsmooth}, there is a unique $L$-linear map $s:\omega^1_{L/k}\to\Omega^1_{L/k}$ such that $d=s\circ d$ and $\partial=s\circ\dlog$, where $\dlog:M\to\omega^1_{L/k}$ is the canonical map (the hypothesis that the log structures be fine is superfluous, cf. \cite[IV, 1.1.6]{logbook}). In particular, the composition of $s$ with the canonical map $\Omega^1_{L/k}\to\omega^1_{L/k}$ maps $dx\in\Omega^1_{L/k}$ to itself, hence is the identity map by linearity.
\end{proof}

Let $z\in Z_{\mathbb{K}}$ and suppose for a contradiction that the map
\[ \begin{CD}
k(z) @>{1\mapsto \sum_{i=1}^n=U_idf_i}>> \omega^1_{X_\mathbb{K}/\mathbb{K}}\otimes_{\mathcal{O}_{X_\mathbb{K}}}k(z)
\end{CD} \]
is zero. Let $x\in X$ be the image of $z\in Z_\mathbb{K}$ and let $d=\td(k(x)/k)$ be the transcendence degree of $k(x)$ over $k$. Note that $d>0$: otherwise $Z_{\mathbb{K}}\times_X\Spec(k(x))=\Spec(k(U_0,...,U_n)/(\sum_iU_if_i))=\emptyset$ and there is no point $z$ lying above $x$.

Choose an algebraic closure $\overline{k(x)}$ of $k(x)$ and let $k(x)^{\text{sep}}$ be the separable algebraic closure of $k(x)$ in $\overline{k(x)}$. Let $\bar{x}\to X$ be the geometric point lying above $x$ corresponding to the inclusion $k(x)\subset k(x)^{\text{sep}}$. Since $X$ is sharp over $k$, by definition (\ref{sharp}) there is a chart $Q\to P$ of $f$ at $\bar{x}$ such that the induced map $P\to k(\bar{x})$ is a log point. Then the map $(k,Q)\to (k(\bar{x}),P)$ is an extension of log points, so by \ref{satz2} the quotient $\omega^1_{k(\bar{x})/k}\cong\omega^1_{k(x)/k}\otimes_{k(x)}k(\bar{x})$ of $\omega^1_{X/k}\otimes_{\mathcal{O}_X}k(\bar{x})$ contains $\Omega^1_{k(\bar{x})/k}\cong\Omega^1_{k(x)/k}\otimes_{k(x)}k(\bar{x})$. Thus, the canonical map $\Omega^1_{k(x)/k}\to\omega^1_{k(x)/k}$ is injective. Hence $\sum_iU_idf_i=0$ in $\Omega^1_{k(x)/k}\otimes_{k(x)}k(z)$. Since $k$ is algebraically closed, it follows that $d=\dim_{k(x)}\Omega^1_{k(x)/k}$. Since $f:X\to\mathbb{A}^n_k$ is unramified, $\Omega^1_{k(x)/k}$ is generated by $df_1,...,df_n$, so without loss of generality we may assume that $df_1,...,df_d$ form a $k(x)$-basis of $\Omega^1_{k(x)/k}$. Thus, for $i=d+1,...,n$ we may write $df_i=\sum_{j=1}^da_{ij}df_j$ for some $a_{ij}\in k(x)$. Then the relation $\sum_iU_idf_i=0$ gives the following equations
\[ U_j+\sum_{i=d+1}^nU_ia_{ij}=0 \]
for each $j=1,...,d$. So we deduce that $U_1,...,U_d\in k(z)$ belong to the $k(x)$-vector space generated by $U_{d+1},U_{d+2},...,U_n$. Moreover, since $U_0=-\sum_{i=1}^nU_if_i$, it follows that
\[ k(z)=k(x)(U_0,U_1,...,U_n)=k(x)(U_{d+1},U_{d+2},...,U_n) \]
and so
\[ \td(k(z)/k(x))\leq n-d. \]
On the other hand we have
\begin{eqnarray*}
\td(k(z)/k(x)) &=& \td(k(z)/k)-\td(k(x)/k) \\
&\geq & n+1-d
\end{eqnarray*}
which is the desired contradiction.
\end{proof}

\begin{cx}\label{cx}
We give an example of a non-sharp (and non-tame) log smooth scheme over the standard log point $(k,\mathbb{N})$ whose generic hyperplane section is nowhere log smooth. Let $0\neq p=\cha(k)$ and consider the morphism of monoids
\begin{eqnarray*}
h:\mathbb{N} &\to & \mathbb{N}\oplus\mathbb{Z} \\
1 &\mapsto & (p,1).
\end{eqnarray*}
Then $\cok(h^{\gp})\cong\mathbb{Z}$, hence
\[ X:=\Spec(k)_{\mathbb{N}}[\mathbb{N}\oplus\mathbb{Z}]\cong\Spec\left(k[t,u^{\pm 1}]/(t^p)\right) \]
with its natural log structure is log smooth over $(k,\mathbb{N})$. Let $M$ denote the log structure on $X$, $N$ the standard log point structure on $k$, and $f:(X,M)\to (k,N)$ the morphism given by map $h$. Note that the image of $(1,0)\in\mathbb{N}\oplus\mathbb{Z}$ in $M^{\gp}/(f^{\ast}N)^{\gp}$ is $p$-torsion: we have $p(1,0)=(p,0)=h(1)+(0,-1)$ and $(0,-1)\in\mathcal{O}_X^*$. So $f$ is not tame.

We claim that $f$ is not sharp. If not, then let $x\to X$ be a geometric point and a chart $Q:=\mathbb{N}\to P$ of $f$ at $x$ such that $P\to k(x)$ is a log point. Then we have an isomorphism $P\oplus k(x)^*\cong M_{k(x)}$. On the other hand, one sees easily that $M_{k(x)}=\mathbb{N}\oplus k(x)^*$, where $\mathbb{N}\to k(x)$ is the standard log point with $\mathbb{N}$ generated by $t$. From this it follows that $P\cong\mathbb{N}$ as monoids, hence if $n\in P$ is a generator, its image in $M_{k(x)}$ is of the form $tv$ for some $v\in k(x)^*$. Now let $q=1\in\mathbb{N}=Q$ be the generator. By definition, its image in $M_{k(x)}$ is $t^pu$. On the other hand, its image in $P$ is $(tv)^m$ for some $m$. Thus, we must have $m=p$ and $u=v^p$. In particular, if the image of $x$ in $X$ is the generic point, then $k(x)$ is a separable closure of $k(u)$ so this is impossible. This proves the claim.

Now set $f_1=t,f_2=u,f_3=u^{-1}$ and use the same notation as before. Pick a point $z\in Z_{\mathbb{K}}$ with image $x\in X$. Then in $\omega^1_{X/k}$ we have
\[ 0=\dlog(q)=\dlog(p,1)=p\dlog(1,0)+\dlog(0,1)=\dlog(0,1)=\dlog(u) \]
so $du=0$ in $\omega^1_{X/k}$. Since $dt=t\dlog(1,0)$ and $t(x)=0$, it follows that
\[ U_1dt+U_2du+U_3d(u^{-1})=0 \]
in $\omega^1_{X_\mathbb{K}/\mathbb{K}}\otimes_{\mathcal{O}_{X_\mathbb{K}}}k(z)$, so a generic hyperplane section of $X$ is nowhere log smooth over $(k,\mathbb{N})$.
\end{cx}

\subsection{A generalization of the Bertini theorem for smoothness}\label{bertinigeneral}
Let again $f:X\to\mathbb{A}^n_k$ and $Z$ be as in \ref{setup}. Here we do not assume the presence of any log structures. Assume there is a morphism of finite type $m:\mathcal{X}\to X$ with $\dim \mathcal{X}\leq d$ and a surjective map of $\mathcal{O}_\mathcal{X}$-modules
\[ m^*\Omega^1_{X/k}\twoheadrightarrow\mathcal{E} \]
with $\mathcal{E}$ a vector bundle of rank $d$ on $\mathcal{X}$. Define $\mathcal{Z}=\mathcal{X}\times_XZ$. As before we let $\mathbb{K}=k(U_0,...,U_n)$. Now, the map
\[ \begin{CD} \mathcal{O}_Z @>{1\mapsto\sum_{i=1}^nU_idf_i}>> \Omega^1_{X/k}\otimes_{\mathcal{O}_X}\mathcal{O}_Z \end{CD} \]
extends to a map
\[ \mathcal{O}_\mathcal{Z}\overset{\phi}{\to}\mathcal{E}\otimes_{\mathcal{O}_\mathcal{X}}\mathcal{O}_\mathcal{Z}. \]
Set
\[ \mathcal{F}:=\cok(\phi) \]
the cokernel of the map $\phi$. So we have a commutative diagram with exact rows
\[ \xymatrix{
\mathcal{O}_{\mathcal{Z}} \ar[r] \ar@{=}[d] & \Omega^1_{X/k}\otimes_{\mathcal{O}_{X}}\mathcal{O}_{\mathcal{Z}} \ar[d] \ar[r] & \Omega^1_{Z/k}\otimes_{\mathcal{O}_Z}\mathcal{O}_{\mathcal{Z}} \ar[d] \ar[r] & 0 \\
\mathcal{O}_{\mathcal{Z}} \ar[r]^{\phi} & \mathcal{E}\otimes_{\mathcal{O}_\mathcal{X}}\mathcal{O}_\mathcal{Z} \ar[r] & \mathcal{F} \ar[r] & 0
}
\]

\begin{theorem}\label{bertgeneral}
If $f$ is unramified, then $\mathcal{F}_{\mathbb{K}}$ is a rank $d-1$ vector bundle quotient of $\Omega^1_{Z_{\mathbb{K}}/\mathbb{K}}\otimes_{\mathcal{O}_{Z_{\mathbb{K}}}}\mathcal{O}_{\mathcal{Z}_{\mathbb{K}}}$. Moreover, if $\mathcal{Z}_{\mathbb{K}}\neq\emptyset$, then $\dim\mathcal{Z}_{\mathbb{K}}\leq d-1$.
\end{theorem}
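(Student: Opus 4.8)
The plan is to run the transcendence-degree argument of \ref{main} (i.e.\ of \cite[I, 6.3]{bertini}) one more time, with the rank-$d$ bundle $\mathcal{E}$ now playing the role that the sheaf of differentials played there. Everything reduces to showing that the section $\phi(1)\in\Gamma(\mathcal{Z}_{\mathbb{K}},\mathcal{E}\otimes_{\mathcal{O}_{\mathcal{X}}}\mathcal{O}_{\mathcal{Z}_{\mathbb{K}}})$ is nowhere vanishing, i.e.\ that for every $z\in\mathcal{Z}_{\mathbb{K}}$ the map $\phi\otimes k(z):k(z)\to\mathcal{E}\otimes_{\mathcal{O}_{\mathcal{X}}}k(z)$ is nonzero. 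Granting this, a nowhere vanishing section of a rank-$d$ vector bundle is, by Nakayama's lemma, locally part of a trivialising frame, hence locally a direct summand; therefore $\mathcal{F}_{\mathbb{K}}=\cok(\phi_{\mathbb{K}})$ is locally free of rank $d-1$. (Recall that the formation of $\cok(\phi)$ commutes with restriction to the generic fibre $\mathcal{Z}_{\mathbb{K}}$.)

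To prove nonvanishing, suppose for contradiction that $\sum_{i=1}^nU_i\,\overline{df_i}=0$ in $\mathcal{E}\otimes_{\mathcal{O}_{\mathcal{X}}}k(z)$ for some $z\in\mathcal{Z}_{\mathbb{K}}$, where $\overline{df_i}$ denotes the image of $df_i$. Let $w\in\mathcal{X}$ be the image of $z$ and $x=m(w)\in X$. Since $f$ is unramified, $\Omega^1_{X/k}$ is generated by $df_1,\dots,df_n$, so their images generate the $d$-dimensional space $\mathcal{E}\otimes_{\mathcal{O}_{\mathcal{X}}}k(w)$ via the surjection $m^*\Omega^1_{X/k}\twoheadrightarrow\mathcal{E}$. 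After reindexing we may assume $\overline{df_1},\dots,\overline{df_d}$ is a $k(w)$-basis and write $\overline{df_i}=\sum_{j=1}^d a_{ij}\overline{df_j}$ for $i>d$, with $a_{ij}\in k(w)$. Comparing basis coefficients in the vanishing relation gives $U_j+\sum_{i>d}U_ia_{ij}=0$ for $j=1,\dots,d$, so $U_1,\dots,U_d$ lie in the $k(w)$-span of $U_{d+1},\dots,U_n$ inside $k(z)$. As $U_0=-\sum_iU_if_i$ and $k(z)=k(w)(U_1,\dots,U_n)$, we conclude $k(z)=k(w)(U_{d+1},\dots,U_n)$, whence $\td(k(z)/k(w))\leq n-d$. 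On the other hand $\mathbb{K}\subseteq k(z)$ gives $\td(k(z)/k)\geq n+1$, while $\td(k(w)/k)\leq\dim\mathcal{X}\leq d$; hence $\td(k(z)/k(w))=\td(k(z)/k)-\td(k(w)/k)\geq n+1-d$. These two inequalities are incompatible, which is the desired contradiction. In contrast to \ref{main}, no log structure and hence no analogue of \ref{satz2} intervenes: the relation already lives in a genuine quotient of $\Omega^1_{X/k}$.

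It remains to identify the quotient and bound the dimension. The middle vertical arrow of the displayed diagram is the base change along $\mathcal{Z}\to\mathcal{X}$ of the surjection $m^*\Omega^1_{X/k}\twoheadrightarrow\mathcal{E}$, hence surjective; since the top row is right exact (it is the relative conormal sequence of the hyperplane section $Z$ over $\mathbb{A}^{n+1}_k$, whose restriction to the generic fibre is the conormal sequence of $Z_{\mathbb{K}}\hookrightarrow X_{\mathbb{K}}$), a diagram chase shows that the induced arrow from the relative differential sheaf to $\mathcal{F}$ is surjective. Restricting to the generic fibre, where this sheaf becomes $\Omega^1_{Z_{\mathbb{K}}/\mathbb{K}}$, exhibits $\mathcal{F}_{\mathbb{K}}$ as a quotient of $\Omega^1_{Z_{\mathbb{K}}/\mathbb{K}}\otimes_{\mathcal{O}_{Z_{\mathbb{K}}}}\mathcal{O}_{\mathcal{Z}_{\mathbb{K}}}$, as claimed. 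For the dimension bound I would use the isomorphism $Z\cong X\times_k\mathbb{A}^n_k$ of \ref{setup}, which gives $\mathcal{Z}=\mathcal{X}\times_XZ\cong\mathcal{X}\times_k\mathbb{A}^n_k$ and hence $\dim\mathcal{Z}\leq\dim\mathcal{X}+n\leq d+n$. As $\mathcal{Z}_{\mathbb{K}}$ is the fibre of $\mathcal{Z}\to\mathbb{A}^{n+1}_k$ over the generic point and is assumed nonempty, each component of $\mathcal{Z}$ meeting it dominates $\mathbb{A}^{n+1}_k$, so $\dim\mathcal{Z}_{\mathbb{K}}\leq\dim\mathcal{Z}-(n+1)\leq d-1$.

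The main obstacle is the dimension bookkeeping in the transcendence-degree step: one must verify that the weaker input $\td(k(w)/k)\leq d$ (the point $w$ lives on $\mathcal{X}$, of dimension only $\leq d$) together with $\rk\mathcal{E}=d$ still forces the clean contradiction $n+1-d\leq n-d$, independently of the actual transcendence degree of $k(w)$. Everything else — the Nakayama passage from fibrewise injectivity to local freeness, and the diagram chase for the quotient — is formal.
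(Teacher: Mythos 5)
Your proposal is correct and follows essentially the same route as the paper: prove that $\phi\otimes k(z)$ is injective at every $z\in\mathcal{Z}_{\mathbb{K}}$ via the Jouanolou transcendence-degree comparison $\td(k(z)/k(w))\leq n-d$ versus $\geq n+1-d$, then conclude local freeness of the cokernel by Nakayama. The only cosmetic differences are that you spell out the dimension bound and the identification of $\mathcal{F}_{\mathbb{K}}$ as a quotient of $\Omega^1_{Z_{\mathbb{K}}/\mathbb{K}}\otimes_{\mathcal{O}_{Z_{\mathbb{K}}}}\mathcal{O}_{\mathcal{Z}_{\mathbb{K}}}$, which the paper delegates to \cite[I, 6.3]{bertini} and to the displayed diagram preceding the theorem.
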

\begin{proof}
Note that the last statement follows from \cite[I, 6.3]{bertini}. For the first statement we will show that $\mathcal{F}_{\mathbb{K}}$ is a vector bundle of rank $d-1$ by the same argument as for the classical Bertini theorem for smoothness (loc. cit.). Since $\mathcal{E}$ is flat, it suffices to show that the map
\[ \phi(z):k(z)\to\mathcal{E}\otimes_{\mathcal{O}_\mathcal{X}}k(z) \]
is injective at any point $z\in \mathcal{Z}_{\mathbb{K}}$. Let $x\in \mathcal{X}$ be the image of $z$. Since $f$ is unramified, $df_1,...,df_n$ generate $\Omega^1_{X/k}$, and since $\mathcal{E}$ is a quotient of the latter on $\mathcal{X}$, we may assume that the images of $df_1,...,df_d$ form a $k(x)$-basis of $\mathcal{E}\otimes_{\mathcal{O}_\mathcal{X}}k(x)$. So for $i=d+1,...,n$ we can write $df_i=\sum_{j=1}^da_{ij}df_j$ for some $a_{ij}\in k(x)$. If $\phi(z)$ is not injective, i.e. $\sum_{i=1}^nU_idf_i=0$ in $\mathcal{E}\otimes_{\mathcal{O}_\mathcal{X}}k(z)$, then we get equations
\[ U_j+\sum_{i=d+1}^nU_ja_{ij}=0 \]
for $j=1,...,d$. Together with $U_0+\sum_{i=1}^nU_if_i=0$, these equations imply that
\[ k(z)=k(x)(U_{d+1},...,U_n) \]
hence
\[ \td(k(z)/k(x))\leq n-d. \]
On the other hand, since $\dim \mathcal{X}\leq d$ we have $\td(k(x)/k)\leq d$, hence
\[ \td(k(z)/k(x))=\td(k(z)/k)-\td(k(x)/k)\geq n+1-d \]
a contradiction.
\end{proof}

The theorem easily implies the following generalization of Bertini's theorem for smoothness (the classical case being $\mathcal{X}=X$ and $\mathcal{E}=\Omega^1_{X/k}$).

\begin{corollary}
If $f$ is unramified and $X$ quasi-compact, then there is a dense open $U\subset\mathbb{A}^{n+1}_k$ such that $\mathcal{F}_U$ is a rank $d-1$ vector bundle quotient of $\Omega^1_{Z_U/U}\otimes_{\mathcal{O}_{Z_U}}\mathcal{Z}_U$.
\end{corollary}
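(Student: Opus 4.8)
The plan is to obtain the corollary from Theorem~\ref{bertgeneral} by the standard spreading-out of a generic-fibre property to a dense open of the base, exactly the device underlying the classical Bertini theorem. Throughout I write $q$ also for the structure morphism $\mathcal{Z}=\mathcal{X}\times_XZ\to\mathbb{A}^{n+1}_k$, and $\eta$ for the generic point of $\mathbb{A}^{n+1}_k$, so that $\mathcal{Z}_{\mathbb{K}}=q^{-1}(\eta)$.

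First I would dispose of the quotient structure, which is formal and survives any open restriction of the base. The middle vertical arrow $\Omega^1_{X/k}\otimes\mathcal{O}_{\mathcal{Z}}\to\mathcal{E}\otimes\mathcal{O}_{\mathcal{Z}}$ of the displayed diagram is surjective, being the base change of $m^*\Omega^1_{X/k}\twoheadrightarrow\mathcal{E}$; since both rows are exact, a diagram chase gives that the right-hand vertical arrow $\Omega^1_{Z/\mathbb{A}^{n+1}_k}\otimes\mathcal{O}_{\mathcal{Z}}\twoheadrightarrow\mathcal{F}$ is surjective as well (here the top-right term is the relative cotangent sheaf, restricting on the fibre over $\mathbb{K}$ to the $\Omega^1_{Z_{\mathbb{K}}/\mathbb{K}}$ of \ref{bertgeneral}). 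For any open $U\subset\mathbb{A}^{n+1}_k$ the inclusion $\mathcal{Z}_U=q^{-1}(U)\hookrightarrow\mathcal{Z}$ is open, hence flat, and $\Omega^1_{Z/\mathbb{A}^{n+1}_k}$ restricts to $\Omega^1_{Z_U/U}$; as restriction to an open preserves surjections, $\mathcal{F}|_{\mathcal{Z}_U}$ is a quotient of $\Omega^1_{Z_U/U}\otimes\mathcal{O}_{\mathcal{Z}_U}$ for every such $U$. It thus remains only to produce a dense open $U$ over which $\mathcal{F}$ is locally free of rank $d-1$.

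For that, note that since $X$ is quasi-compact the schemes $\mathcal{X}$, $Z$, $\mathcal{Z}$ are of finite type over $k$, hence noetherian, and $\mathcal{F}$ is coherent. The locus
\[ W:=\{\,w\in\mathcal{Z}:\mathcal{F}_w\text{ is free of rank }d-1\text{ over }\mathcal{O}_{\mathcal{Z},w}\,\} \]
is therefore open, the locally free locus of a coherent sheaf being open with locally constant rank. By Theorem~\ref{bertgeneral}, $\mathcal{F}|_{\mathcal{Z}_{\mathbb{K}}}$ is a vector bundle of rank $d-1$, so $\mathcal{Z}_{\mathbb{K}}\subseteq W$. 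Now $\mathcal{Z}\setminus W$ is closed and $q$ is of finite type, so $q(\mathcal{Z}\setminus W)$ is constructible by Chevalley's theorem; as $\mathcal{Z}_{\mathbb{K}}\subseteq W$ forces $\eta\notin q(\mathcal{Z}\setminus W)$, this constructible set is contained in a proper closed subset of the irreducible scheme $\mathbb{A}^{n+1}_k$, and I would set
\[ U:=\mathbb{A}^{n+1}_k\setminus\overline{q(\mathcal{Z}\setminus W)}, \]
a dense open with $q^{-1}(U)=\mathcal{Z}_U\subseteq W$. Then $\mathcal{F}|_{\mathcal{Z}_U}$ is a vector bundle of rank $d-1$, and together with the quotient structure above this is the assertion of the corollary.

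The argument is entirely routine given Theorem~\ref{bertgeneral}; the only step needing any care, and the one I would regard as the crux, is the passage from the open subset $W\subseteq\mathcal{Z}$ to a dense open of the base $\mathbb{A}^{n+1}_k$, i.e. the combination of openness of the locally free locus with Chevalley constructibility. Everything else --- the quotient structure and the noetherian hypotheses ensuring these tools apply --- follows from the quasi-compactness of $X$ and the finite-type assumptions already in force.
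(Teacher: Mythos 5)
Your argument is correct and reaches the conclusion by the same overall strategy as the paper --- establish the statement over the generic point of $\mathbb{A}^{n+1}_k$ via \ref{bertgeneral} and spread it out to a dense open --- but the spreading-out step is implemented with a different key lemma. The paper first reduces to the case where $\mathcal{F}_{\mathbb{K}}$ is free, lifts a basis $e_1,\dots,e_{d-1}$ to sections of $\mathcal{F}$ over $\mathcal{Z}_{U'}$ for a dense open $U'$, and then shrinks $U'$ by a limit argument so that the induced map $\mathcal{O}_{\mathcal{Z}_{U'}}^{d-1}\to\mathcal{F}_{U'}$ identifies $\mathcal{F}_{U'}$ with a free module; the hypothesis it exploits is that $\mathcal{Z}\to\mathbb{A}^{n+1}_k$ is of finite presentation, so this version of the argument would survive over a non-noetherian base. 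You instead combine openness of the locus where a coherent sheaf is free of a given rank with Chevalley's theorem on constructible images; since everything here is of finite type over the field $k$, the noetherian and coherence hypotheses you need are automatic, and your route is if anything tidier --- it treats the general case in one stroke, with no reduction to a free $\mathcal{F}_{\mathbb{K}}$ and no limit argument. Two small points are worth making explicit: for $w\in\mathcal{Z}_{\mathbb{K}}$ one has $\mathcal{O}_{\mathcal{Z}_{\mathbb{K}},w}=\mathcal{O}_{\mathcal{Z},w}$ (every nonzero polynomial in the $U_i$ is a unit there), so freeness of the stalk of $\mathcal{F}_{\mathbb{K}}$ really does place $w$ in your set $W$; and a constructible subset of an irreducible scheme that misses the generic point is contained in a proper closed subset. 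Both are standard and you use them correctly, so there is no gap.
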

\begin{proof}
It suffices to find $U$ such that $\mathcal{F}_U$ is a vector bundle of rank $d-1$. First assume that $\mathcal{F}_{\mathbb{K}}$ is free of basis $e_1,...,e_{d-1}$ say. Then we can find a dense open set $U'\subset\mathbb{A}^{n+1}_k$ such that $e_i\in\mathcal{F}_{U'}$ for $i=1,...,d-1$, and this defines a map $\mathcal{O}_{\mathcal{Z}_{U'}}^{d-1}\to\mathcal{F}_{U'}$. Since the cokernel $\mathcal{G}$ of this map vanishes over the generic point of $\mathbb{A}^{n+1}_k$, by a limit argument we can find a dense open $U\subset U'$ such that $\mathcal{G}_U=0$. Since $X$, hence $\mathcal{Z}$, is quasi-compact, $\mathcal{Z}\to\mathbb{A}^{n+1}_k$ is of finite presentation, so the general case follows easily from this.
\end{proof}

\subsection{Multiple hyperplane sections}\label{multiple}
We can generalize the previous results to the case of $r$ hyperplanes. This time we let
\[ Z:=\underline{\Spec}_{X}\left(\dfrac{\mathcal{O}_{X}[U_i^{(j)}]_{0\leq i\leq n, 1\leq j\leq r}}{\left(U_0^{(j)}+\sum_{i=1}^nU_i^{(j)}f_i\right)_{j=1,...,r}}\right) \]
and $q:Z\to\mathbb{A}^{(n+1)r}_k$ the projection. The generic fibre of $q$ will be non-empty if (and only if) $\dim\overline{f(X)}\geq r$; namely if the latter condition holds then there is an irreducible component $Y$ of $X$ for which $\dim\overline{f(Y)}\geq r$ and then $Z\times_XY\cong Y\times_k\mathbb{A}^{nr}_k$ is an irreducible component of $Z\cong X\times_k\mathbb{A}^{nr}_k$ and the generic fibre of $q|_{Z_Y}$ is non-empty by \cite[I, 6.6]{bertini}.

\begin{corollary}\label{bertini1}
Endow $\mathbb{A}^{(n+1)r}_k$ with the inverse image log structure from $\Spec(k)$ and $Z$ with the inverse image log structure of $X$ (via the structure maps). If $f$ is unramified and $X\to\Spec(k)$ is sharp log smooth, then the generic fibre of $q$ is log smooth.
\end{corollary}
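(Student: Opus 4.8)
The plan is to induct on the number $r$ of hyperplanes, peeling off one hyperplane at a time and feeding the output of \ref{logbertinigeneric} back into itself. The base case $r=1$ is exactly \ref{logbertinigeneric}. For the inductive step, write $\mathbb{K}^{(j)}:=k(U_i^{(l)}:1\le l\le j)$ and let $Z_j\hookrightarrow X_{\mathbb{K}^{(j)}}$ denote the scheme obtained by cutting $X$ successively by the hyperplanes attached to the variable groups $U_i^{(1)},\dots,U_i^{(j)}$, each cut taken as a generic fibre over the next function field in the style of \ref{setup}. A direct check shows that $Z_r$ coincides with the generic fibre $Z_{\mathbb{K}}$ of $q$: at each stage, base-changing $Z_{j-1}$ to $\mathbb{K}^{(j)}$ and imposing the single linear equation $U_0^{(j)}+\sum_i U_i^{(j)}f_i=0$ reproduces, after $r$ steps, exactly the defining equations of \ref{multiple}, so that $Z_r=X_{\mathbb{K}}$ cut by all $r$ equations. (If $\dim\overline{f(X)}<r$ the generic fibre is empty and there is nothing to prove.)

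It therefore suffices to check that at each stage the hypotheses of \ref{main} and \ref{logbertinigeneric} hold for the map $Z_{j-1}\to\mathbb{A}^n_{\mathbb{K}^{(j-1)}}$ defined by the restrictions of $f_1,\dots,f_n$, over the log point $(\mathbb{K}^{(j-1)},Q)$ (which is again a log point with the same fine monoid $Q$). There are two standing hypotheses: that $Z_{j-1}$ is a \emph{sharp} log smooth $(\mathbb{K}^{(j-1)},Q)$-scheme, and that the restricted map is unramified. Log smoothness is the inductive hypothesis. For sharpness, note that $X_{\mathbb{K}^{(j-1)}}\to(\mathbb{K}^{(j-1)},Q)$ is sharp, being the base change of the sharp morphism $X\to(k,Q)$ along $\Spec\mathbb{K}^{(j-1)}\to\Spec k$ (\ref{sharplemma}(iv)); equipping the closed subscheme $Z_{j-1}$ with the inverse image log structure and applying \ref{sharplemma}(iii) to the morphism of schemes $Z_{j-1}\hookrightarrow X_{\mathbb{K}^{(j-1)}}$ then shows $Z_{j-1}\to(\mathbb{K}^{(j-1)},Q)$ is sharp. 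These log structures are consistent because the inverse image log structure is functorial, so the log structure carried by $Z_{j-1}$ is the one ultimately inherited from $M$ on $X$, matching the log structure placed on the $r$-fold $Z$ in \ref{multiple}.

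For unramifiedness, apply the (non-logarithmic) conormal sequence, the analogue of \ref{jacobi}, to the closed immersion $Z_{j-1}\hookrightarrow X_{\mathbb{K}^{(j-1)}}$: the module $\Omega^1_{Z_{j-1}/\mathbb{K}^{(j-1)}}$ is a quotient of $\Omega^1_{X_{\mathbb{K}^{(j-1)}}/\mathbb{K}^{(j-1)}}\cong\Omega^1_{X/k}\otimes_k\mathbb{K}^{(j-1)}$. Since $f:X\to\mathbb{A}^n_k$ is unramified, the latter is generated by $df_1,\dots,df_n$, hence so is its quotient; this says precisely that $Z_{j-1}\to\mathbb{A}^n_{\mathbb{K}^{(j-1)}}$ is unramified. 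With both hypotheses verified, \ref{logbertinigeneric} yields that $Z_j$ is log smooth over $(\mathbb{K}^{(j)},Q)$, completing the induction and giving log smoothness of $Z_{\mathbb{K}}=Z_r$.

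The only point requiring genuine care is the bookkeeping of the first paragraph: one must verify that the iterated hyperplane sections, formed over the successive function fields $\mathbb{K}^{(j)}$, really do reassemble into the single generic fibre of $q$, and that the inverse image log structures are compatible at every stage. Once this identification is in place, the argument is a purely mechanical iteration of the single-hyperplane result, the two standing hypotheses being propagated respectively by \ref{sharplemma} and by the conormal sequence; no new transcendence-degree computation beyond that already contained in \ref{main} is needed.
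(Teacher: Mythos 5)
Your proof is correct and follows essentially the same route as the paper: induction on $r$, identifying the iterated single-hyperplane sections over successive function fields with the generic fibre of $q$, propagating sharpness via \ref{sharplemma}(iii) and (iv), and invoking \ref{logbertinigeneric} at each stage. Your explicit verification that the restricted map $Z_{j-1}\to\mathbb{A}^n_{\mathbb{K}^{(j-1)}}$ remains unramified (via the conormal sequence) is a hypothesis the paper's proof leaves implicit, and it is a worthwhile addition.
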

\begin{proof}
The proof is by induction on $r$, the case $r=1$ being of course \ref{logbertinigeneric}. Let $\mathbb{K}'=k(U_i^{(j)})_{0\leq i\leq n, 1\leq j\leq r-1}$ and consider
\[ Z':=\underline{\Spec}_{X}\left(\dfrac{\mathcal{O}_{X}\otimes_k\mathbb{K}'}{\left(U_0^{(j)}+\sum_{i=1}^nU_i^{(j)}f_i\right)_{j=1,...,r-1}}\right). \]
By induction, $Z'$ is log smooth over $(\mathbb{K}',Q)$. Moreover, it is sharp by \ref{sharplemma} (iii) and (iv). So by \ref{logbertinigeneric} the scheme
\[ Z''=\underline{\Spec}_{Z'}\left(\dfrac{\mathcal{O}_{Z'}\otimes_{\mathbb{K}'}\mathbb{K}'(U_0^{(r)},U_1^{(r)},...,U_n^{(r)})}{\left(U_0^{(r)}+\sum_{i=1}^nU_i^{(r)}f_i\right)}\right) \]
is log smooth over the field $\mathbb{K}'(U_0^{(r)},U_1^{(r)},...,U_n^{(r)})=k(U_i^{(j)})_{0\leq i\leq n, 1\leq j\leq r}$. To complete the proof it suffices to remark that this is just the generic fibre of $q$.
\end{proof}

Let $\mathcal{X}$ be as in \ref{bertinigeneral}. Define $\mathcal{Z}=\mathcal{X}\times_XZ$ with $Z$ as above. There is a canonical map $\phi:\mathcal{N}\otimes_{\mathcal{O}_Z}\mathcal{O}_{\mathcal{Z}}\to\mathcal{E}\otimes_{\mathcal{O}_{\mathcal{X}}}\mathcal{O}_{\mathcal{Z}}$, where $\mathcal{N}$ is the conormal sheaf of $Z$ in $X\times\mathbb{A}^{(n+1)r}$. Set $\mathcal{F}:=\cok(\phi)$.

\begin{corollary}\label{bertini2}
With notation and assumptions as in \ref{bertinigeneral} except $\mathcal{Z}$ and $\mathbb{K}:=k(U_i^{(j)})_{0\leq i\leq n, 1\leq j\leq r}$. If $f$ is unramified, then $\mathcal{F}_{\mathbb{K}}$ is a rank $d-r$ vector bundle quotient of $\Omega^1_{Z_{\mathbb{K}}/\mathbb{K}}\otimes_{\mathcal{O}_{Z_{\mathbb{K}}}}\mathcal{O}_{\mathcal{Z}_{\mathbb{K}}}$ and $\dim\mathcal{Z}_{\mathbb{K}}\leq d-r$ if $\mathcal{Z}_{\mathbb{K}}\neq\emptyset$.
\end{corollary}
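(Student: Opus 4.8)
The plan is to argue by induction on $r$, paralleling exactly the proof of \ref{bertini1}, with the base case $r=1$ being Theorem \ref{bertgeneral}. Write $\mathbb{K}'=k(U_i^{(j)})_{0\le i\le n,\,1\le j\le r-1}$ and let $Z'$ denote the $(r-1)$-fold section formed from the first $r-1$ families of linear forms, with $\mathcal{Z}'=\mathcal{X}\times_XZ'$ and $\mathcal{F}'=\cok(\phi')$ the associated cokernel. By the inductive hypothesis, $\mathcal{F}'_{\mathbb{K}'}$ is a rank $d-r+1$ vector bundle quotient of $\Omega^1_{Z'_{\mathbb{K}'}/\mathbb{K}'}\otimes_{\mathcal{O}_{Z'_{\mathbb{K}'}}}\mathcal{O}_{\mathcal{Z}'_{\mathbb{K}'}}$, and $\dim\mathcal{Z}'_{\mathbb{K}'}\le d-r+1$ whenever it is non-empty.

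I would then re-run \ref{bertgeneral} one more time, now over the base field $\mathbb{K}'$, taking for its data $Z'_{\mathbb{K}'}$ in the role of $X$, the morphism $\mathcal{Z}'_{\mathbb{K}'}\to Z'_{\mathbb{K}'}$ in the role of $m$, the integer $d-r+1$ in the role of $d$, and $\mathcal{F}'_{\mathbb{K}'}$ in the role of $\mathcal{E}$, the relevant map being the restriction of $f$ together with the single new family $U_0^{(r)}+\sum_iU_i^{(r)}f_i$. The hypotheses of \ref{bertgeneral} are then met: one has $\dim\mathcal{Z}'_{\mathbb{K}'}\le d-r+1=\rk\mathcal{F}'_{\mathbb{K}'}$, and $\mathcal{F}'_{\mathbb{K}'}$ is by construction a vector bundle quotient of the pullback of $\Omega^1_{Z'_{\mathbb{K}'}/\mathbb{K}'}$. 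The conclusion of \ref{bertgeneral} is that after this last section the cokernel is a rank $(d-r+1)-1=d-r$ vector bundle quotient of $\Omega^1_{Z_{\mathbb{K}}/\mathbb{K}}\otimes_{\mathcal{O}_{Z_{\mathbb{K}}}}\mathcal{O}_{\mathcal{Z}_{\mathbb{K}}}$, of dimension $\le d-r$; and, exactly as in the final paragraph of \ref{bertini1}, the scheme produced by adjoining this last family is precisely the generic fibre of $q$, which gives the asserted statement.

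The one point that genuinely requires checking before \ref{bertgeneral} can be reapplied is that the restriction of $f$ to $Z'_{\mathbb{K}'}$ is again unramified, i.e. that $df_1,\dots,df_n$ generate $\Omega^1_{Z'_{\mathbb{K}'}/\mathbb{K}'}$. This is where I expect the only real subtlety to lie, since $Z'$ carries the extra coordinates $U_i^{(j)}$ and one might fear spurious differentials $dU_i^{(j)}$. The resolution is that on the generic fibre $Z'_{\mathbb{K}'}$ these $U$'s are elements of the base field $\mathbb{K}'$, hence have zero differential; concretely $Z'_{\mathbb{K}'}$ is a closed subscheme of $X_{\mathbb{K}'}=X\otimes_k\mathbb{K}'$, so by the conormal exact sequence $\Omega^1_{Z'_{\mathbb{K}'}/\mathbb{K}'}$ is a quotient of $\Omega^1_{X_{\mathbb{K}'}/\mathbb{K}'}=\Omega^1_{X/k}\otimes_k\mathbb{K}'$, which is generated by $df_1,\dots,df_n$ because $f$ is unramified. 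Thus $f$ is unramified on $Z'_{\mathbb{K}'}$ and the induction proceeds.

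The dimension bound $\dim\mathcal{Z}_{\mathbb{K}}\le d-r$ likewise follows either by iterating the dimension statement of \ref{bertgeneral} or directly from \cite[I, 6]{bertini} applied to $r$ simultaneous sections, so no separate argument is needed there. The remaining bookkeeping is to confirm that forming cokernels one section at a time agrees with the single cokernel $\mathcal{F}=\cok(\phi)$ of \ref{multiple}; this holds because $\mathcal{N}$ is free on the $r$ linear forms and passing to each successive section quotients the current bundle by the image of exactly one of the sections $\sum_iU_i^{(j)}df_i$, so that the iterated quotient is $\mathcal{E}$ modulo the images of all $r$ forms, which is $\cok(\phi)$.
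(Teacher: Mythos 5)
Your proposal is correct and follows essentially the same route as the paper: induction on $r$ with base case \ref{bertgeneral}, re-applying \ref{bertgeneral} over $\mathbb{K}'$ with $Z'_{\mathbb{K}'}$, $\mathcal{Z}'_{\mathbb{K}'}$, $d-r+1$ and the inductively obtained bundle in the roles of $X$, $\mathcal{X}$, $d$ and $\mathcal{E}$, then identifying the iterated quotient with $\mathcal{F}_{\mathbb{K}}$ and reading off the dimension bound from \cite[I, 6.3]{bertini}. Your explicit check that the restriction to $Z'_{\mathbb{K}'}$ remains unramified is left implicit in the paper but is the right point to verify.
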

\begin{proof}
The proof is an easy induction on $r$ as in that of \ref{bertini1} and we use the same notation. The case $r=1$ being \ref{bertgeneral}, assume $r>1$. Let $\mathcal{Z}'=\mathcal{X}\times_XZ'$ and $\mathcal{Z}''=\mathcal{X}\times_XZ''$. Note that $\mathcal{Z}''=\mathcal{Z}_{\mathbb{K}}$. We may assume $\mathcal{Z}_{\mathbb{K}}\neq\emptyset$. By induction on $r$ we have a rank $d-(r-1)$ vector bundle quotient of $\Omega^1_{Z'/\mathbb{K}'}\otimes_{\mathcal{O}_{Z'}}\mathcal{O}_{\mathcal{Z}'}$ and since $\mathcal{Z}_{\mathbb{K}}\neq\emptyset$ we must also have $\mathcal{Z}'\neq\emptyset$, hence $\dim\mathcal{Z}'\leq d-(r-1)$ by induction hypothesis. Applying \ref{bertgeneral}, we get a rank $d-r$ vector bundle quotient of $\Omega^1_{Z''/\mathbb{K}}\otimes_{\mathcal{O}_{Z''}}\mathcal{O}_{\mathcal{Z}''}$. Since it is the quotient of $\mathcal{E}\otimes_{\mathcal{O}_{\mathcal{X}}}\mathcal{O}_{\mathcal{Z}''}$ by the submodule generated by the images of the differentials $\sum_iU_i^{(j)}df_i$ for $j=1,...,r$, it is equal to $\mathcal{F}_{\mathbb{K}}$. Finally, since $\mathcal{Z}''\neq\emptyset$ we have $\dim\mathcal{Z}''\leq d-(r-1)-1=d-r$ by \cite[I, 6.3]{bertini}, and this completes the induction.
\end{proof}

\section{Quasi-projective case}
Let $T$ be an arbitrary irreducible affine noetherian scheme and let $f:X\to\mathbb{P}^n_T$ be a morphism of finite type.

\subsection{Grassmannians}\label{setupgrass}
Let $G_{\mathbb{P}^n_T,r}:=\Grass_{n+1-r}(\Gamma(\mathbb{P}^n_T,\mathcal{O}_{\mathbb{P}^n_T}(1)))$ be the grassmannian of quotients of $\Gamma(\mathbb{P}^n_T,\mathcal{O}_{\mathbb{P}^n_T}(1))\simeq\mathcal{O}_T^{n+1}$ which are locally free of rank $n+1-r$. Let $S$ be a $T$-scheme and $l_1,...,l_r\in\mathcal{O}_S(S)^{n+1}$. We say that the $l_i$ are \emph{non-degenerate} if the element $\wedge_{i=1}^rl_i\in\wedge_{i=1}^r\mathcal{O}_S^{n+1}$ generates a direct summand. In this case it is clear that the $l_i$ induce a locally free quotient $Q$ of $\mathcal{O}_S^{n+1}$ of rank $n+1-r$, i.e. an element in $G_{\mathbb{P}^n_T,r}(S)$. Conversely, if $S$ is a $T$-scheme over which every vector bundle is free, then to give a quotient of $\mathcal{O}_S^{n+1}$ or rank $n+1-r$ is the same as giving a free submodule $M\subset\mathcal{O}_S^{n+1}$ of rank $r$ which has a non-degenerate basis $l_1,...,l_r$. In any case, if $l_1,...,l_r$ are non-degenerate then each of the $l_i$ determines a hyperplane in $\mathbb{P}^n_S$ so that $L=\mathbb{P}_S(Q)\hookrightarrow\mathbb{P}^n_S$ is the subscheme cut out by the $l_i$, $1\leq i\leq r$.

Fix a projective space $\mathbb{P}^n_T$ and $r\in\mathbb{N}$ and write $G:=G_{\mathbb{P}^n_T,r}$. Let $\mathcal{Q}$ be the tautological quotient on $G$. There is a canonical closed immersion
\[ \Grass_1(\mathcal{Q})\hookrightarrow\Grass_1(\mathcal{O}^{n+1}_{G})\cong\Grass_1(\mathcal{O}^{n+1}_T)\times_TG=\mathbb{P}^n_T\times_TG \]
whose image is the set of points $(x,L)\in\mathbb{P}^n_T\times_TG$ such that $x\in L$.
So if $f:X\to\mathbb{P}^n_T$ is a morphism, then the functor
\[ \mathcal{H}_X=\{ (x,L)\in X\times_TG: f(x)\in L \}  \]
is the scheme $\mathcal{H}_X:=\Grass_1(\mathcal{Q})\times_{\mathbb{P}^n_T}X$. It comes equipped with a projection
\[ p:\mathcal{H}_X\to G. \]
In particular, if $k$ is a field and $P\in\mathbb{P}^n(k)$, then the linear subvariety defined by a point $l\in \mathcal{H}_P(k)$ can be identified with the intersection of a set of $r$ non-degenerate $k$-hyperplanes through the point $P$, and the map $p:\mathcal{H}_P\to G$ is a closed immersion.

\begin{lemma}\label{smoothincidencevariety}
Fix a fine log structure on $T$. If $X$ is log smooth over $T$, then the scheme $\mathcal{H}_X$ (endowed with the inverse image log structure of $X$) is log smooth over $T$. In particular, $\mathcal{H}_P$ is smooth over $k$.
\end{lemma}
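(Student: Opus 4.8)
The plan is to reduce the log smoothness of $\mathcal{H}_X\to T$ to two inputs: the geometric fact that the first projection $\Grass_1(\mathcal{Q})\to\mathbb{P}^n_T$ is smooth, and the formal fact that a strict morphism with smooth underlying map is log smooth, which one then composes with the log smooth $X\to T$.

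First I would analyze the incidence scheme $I:=\Grass_1(\mathcal{Q})$ through its functor of points. For a $T$-scheme $S$, an $S$-point of $I$ is a rank $n+1-r$ quotient $\mathcal{O}_S^{n+1}\twoheadrightarrow Q$ (a point of $G$) together with a rank-$1$ quotient $Q\twoheadrightarrow\ell$; writing $\mathcal{W}=\ker(\mathcal{O}_S^{n+1}\to Q)$ and $\mathcal{K}=\ker(\mathcal{O}_S^{n+1}\to\ell)$, this is the same datum as a flag of subbundles $\mathcal{W}\subseteq\mathcal{K}\subseteq\mathcal{O}_S^{n+1}$ of ranks $r$ and $n$. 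Since specifying $\mathcal{K}$ (equivalently the rank-$1$ quotient $\mathcal{O}_S^{n+1}\to\ell$) is exactly giving a point of $\mathbb{P}^n_T(S)$, the first projection identifies $I$ with the relative Grassmannian $\Grass_r(\mathcal{S})$ of rank-$r$ subbundles of the tautological rank-$n$ subbundle $\mathcal{S}\subset\mathcal{O}^{n+1}_{\mathbb{P}^n}$. As a Grassmann bundle of a vector bundle, $\Grass_r(\mathcal{S})\to\mathbb{P}^n_T$ is smooth and projective of relative dimension $r(n-r)$. This is the geometric heart of the argument, and the step I expect to require the most care, since everything afterwards is formal.

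Next, because $\mathcal{H}_X=I\times_{\mathbb{P}^n_T}X$, the projection $\mathcal{H}_X\to X$ is the base change of $I\to\mathbb{P}^n_T$ along $f$, hence smooth. Endowing $\mathcal{H}_X$ with the inverse image log structure of $X$ makes $\mathcal{H}_X\to X$ strict, and a strict morphism whose underlying map of schemes is smooth is log smooth: étale-locally one produces the chart $Q\to Q\oplus\mathbb{N}^{r(n-r)}$, where $Q$ is a chart of $M_X$ and the free generators of $\mathbb{N}^{r(n-r)}$ are sent to units $u_i$ whose differentials $du_i$ form a basis of $\Omega^1_{\mathcal{H}_X/X}$ (such units exist locally after translating smooth coordinates), so that condition (b) of \ref{logsmoothdef} holds while the log structure associated to the chart remains the strict one. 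Composing the log smooth $\mathcal{H}_X\to X$ with the log smooth $X\to T$ then yields that $\mathcal{H}_X\to T$ is log smooth.

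Finally, the assertion about $\mathcal{H}_P$ is the special case $T=\Spec(k)$ with the trivial log structure and $X=P=\Spec(k)$: here $\mathcal{H}_P$ is the fibre of $I\to\mathbb{P}^n_k$ over $P$, namely $\Grass_r(\mathcal{S}_P)\cong\Grass_r(k^n)$, and every log structure in sight is trivial, so log smoothness over $k$ is ordinary smoothness. In summary, once the Grassmann-bundle description of the first projection is established, the remaining steps — stability of smoothness under base change, the reduction of strict smooth to log smooth, and composition of log smooth morphisms — are routine.
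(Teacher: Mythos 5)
Your proof is correct, but it takes a genuinely different route from the paper. The paper verifies the infinitesimal lifting criterion for $\mathcal{H}_X\to T$ directly: a point of $\mathcal{H}_X(S_0)$ is unwound as a factorization $\mathcal{O}_{S_0}^{n+1}\twoheadrightarrow Q_0\to P_0$ of locally free quotients, the point of $X$ is lifted using log smoothness of $X$, and then the flag is lifted by choosing lifts of a local basis of $\ker(\mathcal{O}_{S_0}^{n+1}\to Q_0)$ inside $\ker(\mathcal{O}_S^{n+1}\to P)$ and invoking Nakayama. You instead reorganize the same flag description into a fibration: the first projection $\Grass_1(\mathcal{Q})\to\mathbb{P}^n_T$ is identified with the Grassmann bundle $\Grass_r(\mathcal{S})$ of the tautological rank-$n$ subbundle, hence is smooth and projective of relative dimension $r(n-r)$; base change gives that $\mathcal{H}_X\to X$ is smooth and strict, hence log smooth (your chart $Q\to Q\oplus\mathbb{N}^{r(n-r)}$ with the generators sent to unit coordinates is a correct implementation of the standard fact that strict plus classically smooth implies log smooth, and the required fine charts exist \'etale-locally), and one concludes by composing with the log smooth $X\to T$. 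Your approach yields strictly more structural information (the relative Grassmann-bundle description of $\mathcal{H}_X$ over $X$) and confines the logarithmic input to one standard lemma plus stability of log smoothness under composition, whereas the paper's deformation-theoretic argument is self-contained and never needs the Grassmann-bundle identification or the strict-smooth lemma. Both hinge on the same observation that the incidence condition is a factorization of quotients, i.e.\ a flag of subbundles, and both correctly specialize to give smoothness of $\mathcal{H}_P$ over $k$ when the log structures are trivial.
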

\begin{proof}
We must check that for a nilpotent exact closed immersion $S_0\hookrightarrow S$ of log $T$-schemes any element in $\mathcal{H}_X(S_0)$ lifts to an element of $\mathcal{H}_X(S)$, up to replacing $S$ by an \'etale covering. Now we have
\[ \mathcal{H}_X(S_0)=\{(x,L)\in X(S_0)\times G(S_0):f(x)\in L(S_0)\} \]
and let us translate the meaning of a point $(x,L)\in \mathcal{H}_X(S_0)$ in terms of modules. The point $L\in G(S_0)$ corresponds to a quotient
\[ \mathcal{O}_{S_0}^{n+1}\twoheadrightarrow Q_0 \]
with $Q_0$ locally free of rank $n+1-r$, the point $f(x)\in\mathbb{P}^n(S_0)$ corresponds to a quotient
\[ \mathcal{O}_{S_0}^{n+1}\twoheadrightarrow P_0 \]
with $P_0$ locally free of rank one, and finally the condition $(x,L)\in \mathcal{H}_X(S_0)$ means that the quotient onto $P_0$ factors
\[ \mathcal{O}_{S_0}^{n+1}\twoheadrightarrow Q_0\to P_0. \]
Now, $X$ being log smooth, up to localizing for the \'etale topology on $S_0$ we can find a point $x'\in X(S)$ lifting $x$. Then $f(x')$ corresponds to a quotient
\[ \mathcal{O}_{S}^{n+1}\twoheadrightarrow P \]
with $P$ locally free of rank one. Let $M:=\ker\left(\mathcal{O}_{S}^{n+1}\twoheadrightarrow P\right)$, $N_0=\ker\left(\mathcal{O}_{S_0}^{n+1}\twoheadrightarrow Q_0\right)$. Up to localizing on $S_0$ we may assume that $N_0$ is a free module on the basis $l_1,...,l_r$. Since $N_0\subset M\otimes_{\mathcal{O}_{S}}\mathcal{O}_{S_0}$, we can choose lifts $n_1,...,n_r\in M$ of $l_1,...,l_r$. Then $n_1,...,n_r\in\mathcal{O}_{S}^{n+1}$ form part of a basis (by Nakayama's lemma), so if we let $N$ be the free submodule generated by the $n_i$, then $Q:=\mathcal{O}_{S}^{n+1}/N$ is a free module. We take $Q$ to be the lift of $Q_0$. By construction we have that the map $\mathcal{O}_{S}^{n+1}\twoheadrightarrow P$ factors
\[ \mathcal{O}_{S}^{n+1}\twoheadrightarrow Q\to P. \]
This means that $f(x')$ lies on the linear space $L'$ corresponding to $Q$, so the point $(x',L')\in \mathcal{H}_X(S)$ is a lift of $(x,L)$.
\end{proof}

\subsection{Reduction to the affine case}
Let $M_1,...,M_{n+1\choose r}$ be the $r\times r$ minors of the matrix $\left(U_i^{(j)}\right)_{0\leq i\leq n,1\leq j\leq r}$. Consider the open $Y\subset\mathbb{A}^{(n+1)r}_T$ defined
\[ Y:=\mathbb{A}^{(n+1)r}_T-\bigcap_{i=1}^{{n+1\choose r}}V(M_i) \]
where $V(M_i)\subset\mathbb{A}^{(n+1)r}_T=\Spec(\mathcal{O}_T[U_i^{(j)}])$ is the zero set of $M_i$. Then for any $T$-scheme $S$ the set $Y(S)$ is a subset of the set of affine homogeneous $S$-hyperplanes $l_1,...,l_r$ which are non-degenerate: after making choice $X_0,...,X_n$ of homogeneous coordinates an $S$-point
\[ (u_0^{(1)},...,u_n^{(1)},...,u_0^{(r)},...,u_n^{(r)})\in Y(S) \]
corresponds to the hyperplanes $\sum_{i=0}^nu_i^{(j)}X_i=0$ for $1\leq j\leq r$. This identification depends on the choice of homogeneous coordinates. Fixing such a choice, let $Q$ be the quotient of $\oplus_{i=0}^n\mathcal{O}_YX_i$ by the $\mathcal{O}_Y$-submodule generated by the elements $l_j:=\sum_{i=0}^nU_i^{(j)}X_i$ for $1\leq j\leq r$. Since, by definition, at every point of $Y$ some $r\times r$ minor of the matrix $(U_i^{(j)})$ is invertible it follows that $l_1\wedge l_2\wedge\cdots\wedge l_r$ forms part of a basis of $\wedge^r(\oplus_{i=0}^n\mathcal{O}_YX_i)$, which implies that $Q$ is a vector bundle of rank $n+1-r$ on $Y$. This defines a morphism
\[ g:Y\to G:=G_{\mathbb{P}^n_T,r}. \]

\begin{lemma}\phantomsection\label{dominant}
\begin{enumerate}[(i)]
\item The morphism $g:Y\to G$ is surjective.
\item $G_t$ is irreducible for all $t\in T$.
\end{enumerate}
\end{lemma}
\begin{proof}
Let $S$ be the spectrum of a field and $l\in G(S)$. Since $S$ is the spectrum of a field, there is a set $\{l_1,...,l_r\}$ of non-degenerate homogeneous $S$-hyperplanes such that $l$ is the quotient of $\oplus_{i=0}^n\mathcal{O}_SX_i$ by the submodule generated by $l_1,...,l_r$. For each $1\leq j\leq r$, write $l_j=\sum_{i=0}^nu_i^{(j)}X_i$. These hyperplanes are non-degenerate if and only if some $r\times r$-minor of the matrix $(u_i^{(j)})$ is non-zero. Hence the point $(u_i^{(j)})\in\mathbb{A}^{(n+1)r}(S)$ lies in $Y(S)$, and this proves (i). Since $Y_t$ is irreducible, (i) implies (ii).
\end{proof}

\begin{proposition}\label{subspaceclosed}
Assume $T$ is the spectrum of a field $k$. Let $\mathcal{Q}$ denote the tautological quotient of $\Gamma(\mathbb{P}^n_k,\mathcal{O}(1))\otimes_k\mathcal{O}_G$. If $S\subset\Gamma(\mathbb{P}^n_k,\mathcal{O}(1))$ is a subspace of dimension at most $n+1-r$ and $\mathcal{Q}':=\im(S\otimes_k\mathcal{O}_G\to\mathcal{Q})$, then there is a dense open $U\subset G$ such that the map $S\otimes_k\mathcal{O}_U\to\mathcal{Q}'|_U$ is an isomorphism and $\mathcal{Q}/\mathcal{Q}'|_U$ is a vector bundle on $U$.
\end{proposition}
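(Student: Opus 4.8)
The plan is to regard everything as a morphism of vector bundles on the Grassmannian $G$ and to locate the desired open $U$ as the locus of maximal fibrewise rank. Write $V:=\Gamma(\mathbb{P}^n_k,\mathcal{O}(1))$, so that $\dim_k V=n+1$ and $\mathcal{Q}$ is the tautological rank $n+1-r$ quotient of $V\otimes_k\mathcal{O}_G$. Let $s:=\dim_k S$, so that by hypothesis $s\leq n+1-r=\rk\mathcal{Q}$. The sheaf $\mathcal{Q}'$ is by definition the image of the composite morphism of vector bundles
\[ \phi:S\otimes_k\mathcal{O}_G\longrightarrow V\otimes_k\mathcal{O}_G\twoheadrightarrow\mathcal{Q}, \]
and I note that formation of the image commutes with restriction to an open subscheme, so that $\mathcal{Q}'|_U=\im(\phi|_U)$ for any open $U\subset G$.

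First I would observe that for every $g\in G$ the fibre map $\phi\otimes k(g)$ has rank at most $\min(s,n+1-r)=s$, and that the locus
\[ U:=\{\,g\in G:\rk(\phi\otimes k(g))=s\,\} \]
is open, by lower semicontinuity of the fibrewise rank of a morphism of coherent sheaves. The key point is then that $U$ is nonempty. To see this I would exhibit a single point at which $\phi$ is fibrewise injective: choose a basis $e_1,\dots,e_s$ of $S$, extend it to a basis $e_1,\dots,e_{n+1}$ of $V$, and --- using $s+r\leq n+1$ --- set $K_0:=\langle e_{s+1},\dots,e_{s+r}\rangle$, a subspace of dimension $r$ with $S\cap K_0=0$. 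The quotient $V\twoheadrightarrow V/K_0$ defines a $k$-point $g_0\in G$ at which the fibre of $\mathcal{Q}$ is $V/K_0$ and the map $S\to V/K_0$ is injective, so $g_0\in U$. Since $G$ is a Grassmannian over a field it is irreducible, whence the nonempty open $U$ is dense.

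On $U$ the morphism $\phi|_U$ is fibrewise injective with source and target locally free, so it is a subbundle inclusion: its cokernel $\mathcal{Q}/\mathcal{Q}'|_U=\cok(\phi|_U)$ has constant fibre dimension $(n+1-r)-s$ on $U$ and is therefore a vector bundle, and $\phi|_U$ is injective as a map of sheaves. Consequently the surjection $S\otimes_k\mathcal{O}_U\to\mathcal{Q}'|_U=\im(\phi|_U)$ is also injective, hence an isomorphism. This yields both assertions simultaneously.

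The only real work is the nonemptiness of $U$, i.e. producing one subspace $K_0$ of dimension $r$ transverse to $S$; this is precisely where the numerical hypothesis $\dim S\leq n+1-r$ enters, and it is elementary linear algebra. The passage from ``fibrewise injective on $U$'' to ``$\cok$ locally free and the map an isomorphism onto its image'' is the standard fact that a morphism of vector bundles of locally constant fibrewise rank is strict, which I would invoke directly rather than reprove.
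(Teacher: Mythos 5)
Your proof is correct. Both you and the paper follow the same overall template --- find one point of $G$ at which $S\otimes k(g)\to\mathcal{Q}\otimes k(g)$ is injective, then spread out using semicontinuity of the fibrewise rank of a map of vector bundles together with irreducibility of $G$ --- but the witness point and the verification are genuinely different. The paper works at the generic point of $G$: it pulls back along the dominant chart $g:Y\to G$ of \ref{dominant} (with $Y\subset\mathbb{A}^{(n+1)r}_k$ the non-degeneracy locus), writes the kernel of $\Gamma(\mathcal{O}(1))\otimes k(Y)\to\mathcal{Q}\otimes k(Y)$ explicitly as the span of the $\sum_iU_i^{(j)}X_i$, and shows a nonzero element of $S\otimes k(Y)$ in that kernel would force all $r\times r$ minors of the generic matrix $(U_i^{(j)})_{m+1\le i\le n}$ to vanish. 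You instead exhibit a single $k$-rational point $V\twoheadrightarrow V/K_0$ with $K_0$ an $r$-dimensional complement transverse to $S$, which reduces the whole verification to elementary linear algebra and needs no auxiliary parameter space or choice of coordinates; the numerical hypothesis $\dim S\le n+1-r$ enters in exactly the same place in both arguments (existence of $K_0$ for you, $r\le n-m$ for the minors in the paper). Your route is the more self-contained one; the paper's is tied to the chart $Y\to G$ because that chart is reused throughout \S3. Two cosmetic points: lower semicontinuity of fibrewise rank holds for morphisms of \emph{locally free} sheaves, not arbitrary coherent ones (which is all you use, since both $S\otimes_k\mathcal{O}_G$ and $\mathcal{Q}$ are vector bundles), and the cleanest form of the final step is that fibrewise injectivity at a point makes $\phi$ a locally split injection in a neighbourhood, which gives openness of $U$, local freeness of the cokernel, and injectivity of $S\otimes_k\mathcal{O}_U\to\mathcal{Q}'|_U$ all at once, with no appeal to reducedness of $G$.
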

\begin{proof}
Let $X_0,....,X_m$ be a basis of $S$ and let $X_{m+1},...,X_n$ be elements of $\Gamma(\mathbb{P}^n_k,\mathcal{O}(1))$ such that $X_0,...,X_n$ form a basis. Let $g:Y\to G$ be the morphism determined by these homogeneous coordinates. It suffices to check that the map $S\otimes_k\mathcal{O}_G\to\mathcal{Q}'$ is an isomorphism over the generic point of $G$. By \ref{dominant} (i), for this we may base change to the fraction field $k(Y)$ of $Y$.

Let $R=\Gamma(\mathbb{P}^n_k,\mathcal{O}(1))/S$. Then we have a commutative diagram with exact rows
\[ \xymatrix{
0 \ar[r] & S\otimes_kk(Y) \ar[r] \ar[d] & \oplus_{i=0}^n k(Y)X_i \ar[r] \ar[d] & R\otimes_kk(Y) \ar[r] \ar[d] & 0 \\
0 \ar[r] & \mathcal{Q}'\otimes_{\mathcal{O}_G}k(Y) \ar[r] & \mathcal{Q}\otimes_{\mathcal{O}_G}k(Y) \ar[r] & (\mathcal{Q}/\mathcal{Q}')\otimes_{\mathcal{O}_G}k(Y) \ar[r] & 0 \\
} \]
and, by definition of the morphism $g$, the kernel $\kappa$ of the middle vertical map is generated by the elements $\sum_{i=0}^{n}U_i^{(j)}X_i$ for $j=1,...,r$. If $0\neq s\in\kappa\cap (S\otimes k(Y))$, then we may write
\[ s=\sum_{j=1}^ra_j\sum_{i=0}^{n}U_i^{(j)}X_i \]
and
\[ s=\sum_{i=0}^mb_iX_i \]
for some $a_j,b_i\in k(Y)$. Equating we find
\[ \sum_{j=1}^ra_jU_i^{(j)}=0 \]
for $i=m+1,...,n$. This implies that the rank of the matrix $U:=(U_i^{(j)})_{m+1\leq i\leq n,1\leq j\leq r}$ is strictly less than $r$. Since $r\leq n-m$ by assumption, all $r\times r$-minors of $U$ vanish. But the subset of $Y$ where all $r\times r$-minors of $U$ vanish is a proper closed subset, so this cannot happen. Therefore, $\kappa\cap (S\otimes k(Y))=0$.
\end{proof}

We now fix a choice of homogeneous coordinates and let $\mathbb{A}^n_T\subset\mathbb{P}^n_T$ be a standard open with induced coordinates $T_1,...,T_n$. Let $X'=X\times_{\mathbb{P}^n_T}\mathbb{A}^n_T$ and define
\[ Z:=\underline{\Spec}_{X'}\left(\dfrac{\mathcal{O}_{X'}[U_i^{(j)}]_{0\leq i\leq n, 1\leq j\leq r}}{\left(U_0^{(j)}+\sum_{i=1}^nU_i^{(j)}f_i\right)_{j=1,...,r}}\right)
\]
where $f_i\in\Gamma(X',\mathcal{O}_{X'})$ is the image of $T_i$ for $i=1,...,n$. Let $i:Z\hookrightarrow X'\times_T\mathbb{A}^{(n+1)r}_T$ be the closed immersion defining $Z$ and $q:Z\to\mathbb{A}^{(n+1)r}_T$ be the second projection. ($Z$ is the scheme associated to $X'$ in \ref{multiple}, except that we assumed there that $T$ was the spectrum of a field.)

\begin{proposition}\label{reduction}
With our fixed choice of coordinates.
\begin{enumerate}[(i)]
\item There is a cartesian square
\[\xymatrix{
Z\times_{\mathbb{A}^{(n+1)r}_T}Y \ar[d]^{q} \ar[r] & \mathcal{H}_{X'} \ar[d]^{p} \\
Y \ar[r]^{g} & G
} \]
\item The closed immersion $i\times_{\mathbb{A}^{(n+1)r}_T}Y:Z\times_{\mathbb{A}^{(n+1)r}_T}Y\hookrightarrow X'\times_TY$ deduced from $i$ is equal, via the identification $Z\times_{\mathbb{A}^{(n+1)r}_T}Y=\mathcal{H}_{X'}\times_GY$ of (i), to the closed immersion $\mathcal{H}_{X'}\times_GY\hookrightarrow X'\times_TY$ deduced from the closed immersion $\mathcal{H}_{X'}\hookrightarrow X'\times_kG$ defining $\mathcal{H}_{X'}$.
\end{enumerate}
\end{proposition}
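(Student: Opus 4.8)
The plan is to identify both $Z\times_{\mathbb{A}^{(n+1)r}_T}Y$ and $\mathcal{H}_{X'}\times_GY$ with one and the same closed subscheme of $X'\times_TY$, namely the one cut out by the sections $U_0^{(j)}+\sum_{i=1}^nU_i^{(j)}f_i$ for $1\leq j\leq r$; once this is done, both (i) and (ii) fall out at once. The underlying idea is transparent on $S$-points: a section of $\mathcal{H}_{X'}\times_GY$ over a $Y$-scheme $S$ is a point $x\in X'(S)$ together with the structure map $S\to Y$ such that $f(x)$ lies on the linear subspace $L\subset\mathbb{P}^n_S$ cut out by the hyperplanes $l_j=\sum_{i=0}^nU_i^{(j)}X_i$ determined by the map $S\to Y$. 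Since the image of $X'$ under $f$ lies in the chart $\mathbb{A}^n_T$, this incidence condition is exactly the vanishing of $U_0^{(j)}+\sum_{i=1}^nU_i^{(j)}f_i(x)$ for all $j$, which is the condition defining a section of $Z\times_{\mathbb{A}^{(n+1)r}_T}Y$. The substance of the proof is to upgrade this to a scheme-theoretic statement.

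First I would compute $\mathcal{H}_{X'}\times_GY$ by base change. Since, by definition of $g$, the pullback $g^{\ast}\mathcal{Q}$ is the quotient $Q$ of $\bigoplus_{i=0}^n\mathcal{O}_YX_i$ by the submodule generated by the $l_j$, and since $\Grass_1$ commutes with base change, the two fibre products involved commute and give
\[ \mathcal{H}_{X'}\times_GY=\bigl(\Grass_1(\mathcal{Q})\times_GY\bigr)\times_{\mathbb{P}^n_T}X'=\Grass_1(Q)\times_{\mathbb{P}^n_T}X'. \]
Pulling the canonical closed immersion of \ref{setupgrass} back along $g$ identifies $\Grass_1(Q)$ with the universal linear subspace $L\subset\mathbb{P}^n_Y=\mathbb{P}^n_T\times_TY$ cut out by the sections $l_j$ of $\mathcal{O}(1)$. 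As $\mathbb{P}^n_Y\times_{\mathbb{P}^n_T}X'=X'\times_TY$, forming the fibre product over $\mathbb{P}^n_T$ with $X'$ then realizes $\mathcal{H}_{X'}\times_GY=L\times_{\mathbb{P}^n_T}X'$ as a closed subscheme of $X'\times_TY$.

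It remains to compute the ideal of this closed subscheme, and this is the step that needs care. The subscheme $L$ is the zero locus of the sections $l_j$ of $\mathcal{O}(1)$; pulling these back by $f$ and trivializing $\mathcal{O}(1)$ by the coordinate $X_0$---which is legitimate precisely because $X'\subset\mathbb{A}^n_T$ lies in the locus $X_0\neq 0$---turns $l_j$ into $\sum_{i=0}^nU_i^{(j)}(X_i/X_0)$, and using $f^{\ast}(X_i/X_0)=f_i$ with $f_0=1$ this is exactly $U_0^{(j)}+\sum_{i=1}^nU_i^{(j)}f_i$. Hence $L\times_{\mathbb{P}^n_T}X'$ is cut out in $X'\times_TY$ by precisely the equations that define $Z$, restricted to $Y$, i.e.\ it coincides with the image of $i\times_{\mathbb{A}^{(n+1)r}_T}Y$. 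This exhibits $Z\times_{\mathbb{A}^{(n+1)r}_T}Y$ and $\mathcal{H}_{X'}\times_GY$ as the same closed subscheme of $X'\times_TY$. Compatibility of this identification with the structure maps to $Y$ (namely $q$ and $p$, made compatible by $g$) gives the cartesian square (i), while the fact that it is an \emph{equality} of closed subschemes of $X'\times_TY$, hence in particular compatible with the projection to $X'$, is exactly assertion (ii). The only delicate point is the passage from homogeneous to affine coordinates: one must check that the pulled-back ideal of $L$ is generated by the stated sections, which is valid only on the chart $X_0\neq 0$ and is precisely what forces the restriction to $X'$.
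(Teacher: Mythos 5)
Your proof is correct, and it takes a somewhat different route from the paper's. The paper argues purely on the level of $S$-points: it observes that an element of $Z_Y(S)$ is the data of $x\in X'(S)$ and a tuple in $Y(S)$ with the image of $x$ lying on the corresponding hyperplanes, notes that this is functorially the same data as an element of $(\mathcal{H}_{X'}\times_GY)(S)$, and then checks (ii) separately by tracing where a point goes under the two closed immersions. You instead compute $\mathcal{H}_{X'}\times_GY$ scheme-theoretically: base-changing the tautological quotient along $g$ identifies $\Grass_1(\mathcal{Q})\times_GY$ with the universal linear subspace $L\subset\mathbb{P}^n_Y$ cut out by the $l_j$, and dehomogenizing on the chart $X_0\neq 0$ (which contains the image of $X'$) shows that the pulled-back ideal on $X'\times_TY$ is generated by the $U_0^{(j)}+\sum_{i=1}^nU_i^{(j)}f_i$, i.e.\ exactly the ideal of $Z_Y$. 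This buys you (i) and (ii) in one stroke, since an equality of closed subschemes of $X'\times_TY$ is automatically compatible with both projections, and it makes fully explicit the scheme structure that the paper's functorial phrase ``lies on the hyperplanes'' leaves implicit. You were also right to flag the dehomogenization as the one delicate point: the identification of the generators of the ideal is only valid on the chart $X_0\neq 0$, which is precisely why the proposition is stated for $X'=X\times_{\mathbb{P}^n_T}\mathbb{A}^n_T$ rather than for $X$ itself.
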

\begin{proof}
By definition, $Z_Y:=Z\times_{\mathbb{A}^{(n+1)r}_T}Y$ is the closed subscheme of $X'\times_TY$ defined by the ideal generated by $U_0^{(j)}+\sum_{i=1}^{n}U_i^{(j)}f_i$ for $j=1,...,r$. So for any $T$-scheme $S$, every element of $Z_Y(S)$ determines points $x\in X'(S)$ and $(u_0^{(1)},...,u_n^{(1)},...,u_0^{(r)},...,u_n^{(r)})\in Y(S)$ such that the image of $x$ in $\mathbb{A}^n_T$ lies on the (non-degenerate) hyperplanes given by $u_0^{(j)}+\sum_{i=1}^{n}u_i^{(j)}T_i=0$ for $1\leq j\leq r$, and conversely. This is obviously the same thing as $\left(\mathcal{H}_{X'}\times_{G}Y\right)(S)$. Since this correspondence is clearly functorial in $S$, this proves (i).

For (ii), an element of $P\in Z_Y(S)$ determines points $x\in X'(S)$ and $(u_0^{(1)},...,u_n^{(1)},...,u_0^{(r)},...,u_n^{(r)})\in Y(S)$, and then $i(P)=(x,(u_0^{(1)},...,u_n^{(1)},...,u_0^{(r)},...,u_n^{(r)}))$. This is clearly equal to the image of $P$ under the composition $Z_Y=\mathcal{H}_{X'}\times_GY\to X'\times_TY$.
\end{proof}

This allows us to show the following Bertini theorem for log smoothness by reducing to the affine case proven before. In the following we assume $T=\Spec(k)$ with $k$ a field.

\begin{corollary}\label{bertini6}
Assume $T$ has the structure of a log point. Let $f:X\to\mathbb{P}^n_k$ be unramified with $X$ a sharp log smooth $k$-scheme. Endow $\mathcal{H}_X$ with the inverse image log structure of $X$ via the second projection $\pr_2:\Grass_1(\mathcal{Q})\times_{\mathbb{P}^n_T}X\to X$, and $G$ with the inverse image log structure from the structure morphism to $T$. There is a dense open $U\subset G$ such that $p|_U:\mathcal{H}_X|_{U}\to U$ is log smooth.
\end{corollary}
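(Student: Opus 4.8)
The plan is to reduce the statement to the affine multiple--hyperplane theorem \ref{bertini1} through the incidence correspondence of \ref{reduction}, and then to spread out from the generic point of $G$ using generic log smoothness \ref{generic}. Since log smoothness is local on the source and the standard affine opens $D_+(X_\alpha)\subset\mathbb{P}^n_k$, $\alpha=0,\dots,n$, cover $\mathbb{P}^n_k$---so that the open subschemes $\mathcal{H}_{X'_\alpha}:=\mathcal{H}_X\times_X X'_\alpha$, with $X'_\alpha:=X\times_{\mathbb{P}^n_k}D_+(X_\alpha)$, cover $\mathcal{H}_X$---it suffices to produce, for each $\alpha$, a dense open $U_\alpha\subset G$ over which $p:\mathcal{H}_{X'_\alpha}|_{U_\alpha}\to U_\alpha$ is log smooth. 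As $G$ is irreducible (\ref{dominant}(ii)), the finite intersection $U:=\bigcap_\alpha U_\alpha$ is again dense open, and $p|_U$ is then log smooth because its restrictions to the covering $\{\mathcal{H}_{X'_\alpha}|_U\}$ are.

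Fix $\alpha$ and choose homogeneous coordinates making $D_+(X_\alpha)$ the distinguished affine chart $\mathbb{A}^n_k$; this yields the morphism $g:Y\to G$ and the affine scheme $Z$ of \ref{multiple} attached to $X':=X'_\alpha$. The open subscheme $X'$ is still sharp (\ref{sharplemma}(iii), applied to the open immersion $X'\hookrightarrow X$) and log smooth, and $f|_{X'}$ is unramified, so \ref{bertini1} shows that the generic fibre $Z_\mathbb{K}$ of $q:Z\to\mathbb{A}^{(n+1)r}_k$ is log smooth over $(\mathbb{K},Q)$, where $\mathbb{K}=k(Y)$. By \ref{dominant}(i) the map $g$ is surjective, hence sends the generic point $\eta_Y$ of the irreducible scheme $Y$ to the generic point $\eta_G$ of $G$. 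Combining the cartesian square \ref{reduction}(i) with the compatibility of closed immersions in \ref{reduction}(ii)---which matches the inverse image log structures, both being pulled back from $X'$---I find that $Z_\mathbb{K}=(Z\times_{\mathbb{A}^{(n+1)r}_k}Y)_{\eta_Y}$ is the base change of the generic fibre $p^{-1}(\eta_G)$ of $p:\mathcal{H}_{X'}\to G$ along the field extension $k(G)\hookrightarrow\mathbb{K}$.

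Now I would descend. The extension $k(G)\hookrightarrow\mathbb{K}$ is faithfully flat, being an extension of fields (it is in fact purely transcendental, the $\mathrm{GL}_r$-torsor $g$ being generically trivial), and log smoothness is stable under base change and descends along faithfully flat base change; hence the log smoothness of $Z_\mathbb{K}$ over $(\mathbb{K},Q)$ forces $p^{-1}(\eta_G)$ to be log smooth over $(k(G),Q)$. Since $p:\mathcal{H}_{X'}\to G$ is of finite presentation and $G$ is irreducible with a chart $Q$ for its log structure, \ref{generic} provides a dense open $U_\alpha\subset G$ over which $p|_{U_\alpha}$ is log smooth. Running this over all $\alpha$ and intersecting, as explained above, finishes the argument.

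The step I expect to be the main obstacle is the descent of log smoothness along the field extension $\mathbb{K}/k(G)$: one must first be sure that the inverse image log structure on $Z_\mathbb{K}$ genuinely coincides with the one on $p^{-1}(\eta_G)\otimes_{k(G)}\mathbb{K}$ (this is exactly what the matching of the two closed immersions in \ref{reduction}(ii) provides), and then invoke faithfully flat descent for log smoothness in the fine saturated setting. The remaining points---that sharpness and log smoothness survive passage to the opens $X'_\alpha$, and that log smoothness is local on the source so that the charts glue---are routine bookkeeping.
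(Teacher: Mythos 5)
Your proposal is correct and follows essentially the same route as the paper: reduce to the standard affine charts, apply \ref{bertini1} via the identification of \ref{reduction}, descend log smoothness of the generic fibre along $k(G)\hookrightarrow k(Y)$ using \ref{dominant}, spread out with \ref{generic}, and intersect the finitely many dense opens. The only difference is that at the descent step the paper avoids invoking a general faithfully flat descent principle for log smoothness by instead applying the Jacobian criterion \ref{jacobi} to the strict closed immersion of the generic fibre into the log smooth scheme $X'_{k(G)}$, so that only left-exactness and local splitness of the conormal sequence need to be checked after the field extension.
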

\begin{proof}
For each standard open $\mathbb{A}^n_k\subset\mathbb{P}^n_k$, let $X'=X\times_{\mathbb{P}^n_k}\mathbb{A}^n_k$, so that we have a commutative diagram as in \ref{reduction}. Endow $\mathbb{A}^{(n+1)r}_T$ with the inverse image log structure of $T$ and $Z$ with the inverse image log structure from $X$ so that we are in the situation of \ref{bertini1}. It is clear that with these log structures the diagram of \ref{reduction} (i) is a cartesian square of log schemes. Hence by \ref{bertini1} and \ref{dominant}, the generic fibre of $\mathcal{H}_{X'}\to G$ becomes log smooth after the field extension $k(G)\to k(Y)$. Since the log structures on $k(Y)$ and $k(G)$ are both induced by $Q\to k$, it now follows easily from \ref{jacobi} that the generic fibre of of $\mathcal{H}_{X'}\to G$ is itself log smooth. So by \ref{generic} there is a dense open subset $U'\subset G$ such that the restriction of $\mathcal{H}_{X'}\to G$ to $U'$ is log smooth. Hence we can take $U$ to be intersection of the (finitely many) $U'$ for each standard open $\mathbb{A}^n_k\subset\mathbb{P}^n_k$.
\end{proof}

We also have the quasi-projective version of \ref{bertini2}.

\begin{corollary}\label{bertini7}
Let $f:X\to\mathbb{P}^n_k$ be unramified and let $m:\mathcal{X}\to X$ be a morphism of finite type with $\dim\mathcal{X}\leq d$ such that there is a surjective map of $\mathcal{O}_{\mathcal{X}}$-modules
\[ \pi:m^*\Omega^1_{X/k}\twoheadrightarrow\mathcal{E} \]
where $\mathcal{E}$ is vector bundle of rank $d$ on $\mathcal{X}$. If $I$ denotes the ideal sheaf of $\mathcal{H}_X$ in $X\times_kG$, and
\[ \phi:I/I^2\otimes_{\mathcal{O}_X}\mathcal{O}_{\mathcal{X}}\to\mathcal{E}\otimes_{\mathcal{O}_{X}}\mathcal{O}_{\mathcal{H}_X} \]
the composition of the natural map $\left(I/I^2\to\Omega^1_{X/k}\otimes_{\mathcal{O}_X}\mathcal{O}_{\mathcal{H}_X}\right)\otimes_{\mathcal{O}_X}\mathcal{O}_{\mathcal{X}}$ with the map $\pi\otimes_{\mathcal{O}_X}\mathcal{O}_{\mathcal{H}_X}$, then there is a dense open $U\subset G$ such that $\cok(\phi)|_U$ is a vector bundle of rank $d-r$ on $\mathcal{H}_{\mathcal{X}}\times_GU$.
\end{corollary}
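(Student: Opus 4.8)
The plan is to mimic the proof of \ref{bertini6}, reducing to the affine Bertini theorem \ref{bertini2} by means of the cartesian square of \ref{reduction}. Cover $\mathbb{P}^n_k$ by its finitely many standard affine opens $\mathbb{A}^n_k$, and for each set $X'=X\times_{\mathbb{P}^n_k}\mathbb{A}^n_k$, $\mathcal{X}'=\mathcal{X}\times_XX'$, and let $\mathcal{E}'$ be the restriction of $\mathcal{E}$ to $\mathcal{X}'$. Since $X'\hookrightarrow X$ is open we have $\Omega^1_{X'/k}=\Omega^1_{X/k}|_{X'}$, so $\pi$ restricts to a surjection $(m')^*\Omega^1_{X'/k}\twoheadrightarrow\mathcal{E}'$ onto a vector bundle of rank $d$, and $\dim\mathcal{X}'\leq\dim\mathcal{X}\leq d$; thus the affine data $(X',\mathcal{X}',\mathcal{E}',f|_{X'})$ satisfies the hypotheses of \ref{bertini2}. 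Form $Z$ as in \ref{reduction} for $X'$, put $\mathcal{Z}=\mathcal{X}'\times_{X'}Z$, and let $\phi_{\mathrm{aff}}$ denote the affine map of \ref{bertini2} on $\mathcal{Z}\times_{\mathbb{A}^{(n+1)r}_k}Y$.

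The first key step is to match $\phi$ with $\phi_{\mathrm{aff}}$ under pullback along $g:Y\to G$. By \ref{reduction}(ii) the closed immersion $Z\times_{\mathbb{A}^{(n+1)r}_k}Y\hookrightarrow X'\times_kY$ is identified with $\mathcal{H}_{X'}\times_GY\hookrightarrow X'\times_kY$, so the conormal sheaf $\mathcal{N}$ of $Z$ pulls back, over $Y$, to the restriction of $I/I^2$. Because $\mathcal{E}$ and the surjection $\pi$ are pulled back from $\mathcal{X}$ on either side, the affine map $\phi_{\mathrm{aff}}$ is exactly the pullback of the map $\phi$ of the present statement. As cokernels are right exact, hence commute with arbitrary base change, it follows that the pullback of $\cok\phi|_{\mathcal{H}_{\mathcal{X}'}}$ to $\mathcal{H}_{\mathcal{X}'}\times_GY$ equals $\cok\phi_{\mathrm{aff}}$, using the identification $\mathcal{Z}\times_{\mathbb{A}^{(n+1)r}_k}Y=\mathcal{H}_{\mathcal{X}'}\times_GY$ coming from \ref{reduction}(i).

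Next I pass to generic fibres. By \ref{bertini2}, $\cok\phi_{\mathrm{aff}}$ restricted to the fibre over $\mathbb{K}=k(Y)$ is a vector bundle of rank $d-r$ (vacuously so if $\mathcal{Z}_\mathbb{K}=\emptyset$). Since $g$ is dominant (\ref{dominant}), it induces an inclusion of function fields $k(G)\hookrightarrow k(Y)$, and $\mathcal{Z}_\mathbb{K}=\mathcal{H}_{\mathcal{X}'}\times_G\Spec k(Y)$ is the base change along $k(G)\hookrightarrow k(Y)$ of the generic fibre $\mathcal{H}_{\mathcal{X}'}\times_G\Spec k(G)$. As $k(G)\hookrightarrow k(Y)$ is faithfully flat and the property of being locally free of a fixed rank descends along faithfully flat base change for finitely presented modules, we conclude that $\cok\phi$ is already a vector bundle of rank $d-r$ on $\mathcal{H}_{\mathcal{X}'}\times_G\Spec k(G)$. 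This is the exact analogue of the descent of log smoothness via \ref{jacobi} carried out in the proof of \ref{bertini6}.

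Finally, spreading out finishes the argument: everything is of finite presentation over the Noetherian base and $\cok\phi$ is coherent, so the locus of $G$ over which $\cok\phi|_{\mathcal{H}_{\mathcal{X}'}}$ is locally free of rank $d-r$ is open and contains the generic point, hence contains a dense open $U'\subset G$. Intersecting the finitely many opens $U'$ arising from the standard charts $\mathbb{A}^n_k\subset\mathbb{P}^n_k$ yields the desired dense open $U$. I expect the only real subtlety to be the two bookkeeping points on which everything hinges: the identification of $\phi$ with $\phi_{\mathrm{aff}}$ under the cartesian square of \ref{reduction} (which reduces to matching conormal sheaves and the pulled-back quotient $\pi$), and the faithfully flat descent of local freeness from $k(Y)$ down to $k(G)$; granting these, compatibility of cokernels with base change and standard spreading out complete the proof.
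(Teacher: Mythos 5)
Your proposal is correct and follows essentially the same route as the paper: reduce to the generic point of $G$ by a spreading-out/irreducibility argument, localize to the standard affine charts $X'$, identify $\phi$ with the affine map of \ref{bertini2} via the cartesian square and conormal-sheaf identification of \ref{reduction}, and descend local freeness along the faithfully flat extension $k(G)\hookrightarrow k(Y)$ furnished by \ref{dominant}. The paper's own proof is only a three-sentence sketch of exactly this argument, so your write-up simply supplies the details it leaves implicit.
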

\begin{proof}
Since $G$ is irreducible it suffices to show that the restriction of $\cok(\phi)$ to $\mathcal{H}_{\mathcal{X}}\times_G\Spec(k(G))$ is a vector bundle of rank $d-r$. We may check this locally and therefore replace $X$ by $X'$. By \ref{reduction} we may reduce to the situation of \ref{multiple} and apply \ref{bertini2}.
\end{proof}

\section{Hyperplanes through a point over a discrete valuation ring}
Let $V$ be a complete discrete valuation ring with uniformizer $\pi$, algebraically closed residue field $k=V/\pi V$, fraction field $K=V[1/\pi]$, and set $T:=\Spec(V)$. We will often make the following abuse of notation: since a point $L\in G_{\mathbb{P}^n_T,r}(K)=G_{\mathbb{P}^n_T,r}(T)$ can be identified with a linear subvariety of $\mathbb{P}^n_T$, for a subscheme $X\subset \mathbb{P}^n_T$ we write $X\cap L$ for the scheme-theoretic intersection of $X$ with the linear subvariety defined by $L$.

\subsection{Specialization of points}\label{special}
Let $X$ be a separated $T$-scheme of finite type and $x\in X_K$ be a point. We define the \emph{specialization} $\spe_X(x)$ of $x$ to be $\overline{\left\{x\right\}}\cap X_k$, where $\overline{\left\{x\right\}}$ is closure of $x$ in $X$. We simply write $\spe(x)$ for $\spe_X(x)$ when no ambiguity can occur.

We gather here some facts on specialization.

\begin{lemma}\label{closed}
If $x\in X_K$ is a closed point, then either $\spe(x)=\emptyset$ or $\spe(x)$ is a closed point. Conversely, if $\spe(x)$ is a closed point, then $x$ is closed.
\end{lemma}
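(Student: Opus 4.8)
The plan is to analyze the morphism $\overline{\{x\}} \to T$ induced by the structure map, where $\overline{\{x\}}$ denotes the closure of $x$ in $X$ with its reduced scheme structure. Since $X$ is a separated $T$-scheme of finite type, so is the integral closed subscheme $\overline{\{x\}}$; moreover, since $x \in X_K$ lies in the generic fibre, $\overline{\{x\}}$ is a reduced irreducible scheme whose generic point maps to the generic point of $T$, hence it is flat (being torsion-free over the discrete valuation ring $V$) and dominant over $T=\Spec(V)$. The key dimension-theoretic input I would invoke is that for such a dominant integral scheme $W=\overline{\{x\}}$ finite type over $V$, the fibre dimension is upper semicontinuous and, more precisely, the special fibre $W_k = \spe(x)$ has dimension equal to $\dim W_K = \dim \overline{\{x\}}_K$ when the map $W \to T$ is flat with irreducible generic fibre.

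For the first assertion, I would argue as follows. Assume $x$ is a closed point of $X_K$. Then $\overline{\{x\}}_K = \{x\}$ has dimension $0$, so $\overline{\{x\}}$ has dimension one: it is an integral one-dimensional scheme flat over $V$. Its special fibre $\spe(x) = \overline{\{x\}} \cap X_k$ is cut out by the single equation $\pi = 0$ in a one-dimensional integral scheme, so it is either empty (if $\overline{\{x\}}$ happens to be a $K$-scheme, i.e. $\pi$ is already invertible on it) or purely of dimension zero. Since $\overline{\{x\}}$ is of finite type over $V$, its special fibre is a finite type $k$-scheme of dimension zero, hence a finite set of closed points; each such point is closed in $X_k$ and therefore closed in $X$. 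Thus $\spe(x)$, when nonempty, consists of closed points of $X$.

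For the converse, suppose $\spe(x)$ is a closed point (in particular nonempty and of dimension $0$). I want to deduce $\dim \overline{\{x\}}_K = 0$, i.e. that $x$ is closed in $X_K$. Here the main obstacle is controlling the generic fibre dimension from the special fibre dimension: in general one only has $\dim W_k \geq \dim W_K$ for flat finite-type morphisms to a one-dimensional base, with equality requiring care. The clean way around this is to use that $W = \overline{\{x\}}$ is integral and flat over the one-dimensional regular base $V$, so every irreducible component of the special fibre $W_k$ has dimension exactly $\dim W_K$ (this is the standard fact that a dominant finite-type morphism from an integral scheme to $\Spec$ of a discrete valuation ring has equidimensional fibres of the expected dimension, e.g. by Krull's principal ideal theorem applied to the cutting out of $W_k$ by the single nonzerodivisor $\pi$). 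Since $\spe(x) = W_k$ is a closed point, $\dim W_k = 0$, whence $\dim W_K = 0$, so $x$ is a closed point of $X_K$.

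I expect the main subtlety to lie precisely in justifying the equidimensionality claim $\dim \spe(x) = \dim \overline{\{x\}}_K$ cleanly and in handling the edge case $\spe(x) = \emptyset$ (which arises exactly when $x$ does not specialize, i.e. when $\pi$ is invertible on $\overline{\{x\}}$). Once flatness of $\overline{\{x\}}$ over $V$ is recorded — which follows immediately from $\overline{\{x\}}$ being reduced, irreducible, and dominant over the regular one-dimensional base, so that it is $\pi$-torsion-free — the rest is a direct application of the principal ideal theorem to the hypersurface $\{\pi = 0\}$ inside $\overline{\{x\}}$.
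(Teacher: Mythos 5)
Your overall strategy (dimension theory of $\overline{\{x\}}$ over the discrete valuation ring, rather than the paper's Nagata compactification plus the valuative criterion) is a reasonable alternative, but as written it has a genuine gap in the first direction: your argument only shows that $\spe(x)$ is a \emph{finite set} of closed points, not that it is a \emph{single} closed point, which is what the lemma asserts and what the paper uses later (e.g.\ in \ref{gen} and \ref{sp}, where $\spe$ is treated as a map on closed points). The single-point conclusion is not a dimension statement and genuinely uses the completeness of $V$: over a non-Henselian base such as $V=\mathbb{Z}_{(5)}$ the closure of the closed point $V(t^2+1)\subset\mathbb{A}^1_{\mathbb{Q}}$ is $\Spec\mathbb{Z}_{(5)}[i]$, whose special fibre consists of two points. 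The paper gets connectedness for free because the closure of $x$ in a compactification is the image of a section $\Spec(V_L)\to\bar{X}$, and $V_L$ (the normalization of the \emph{complete} discrete valuation ring $V$ in the finite extension $k(x)/K$) is local. Your proof never invokes completeness of $V$, which is a sign that it cannot prove the stated claim; you would need to supplement it with a properness or Henselian-connectedness argument.

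A secondary, fillable, issue is the dimension bookkeeping. The step ``$\overline{\{x\}}_K$ has dimension $0$, so $\overline{\{x\}}$ has dimension one'' is asserted without proof, and the equidimensionality claim you rely on in the converse is attributed to Krull's principal ideal theorem alone. Krull only controls \emph{codimension}: it gives that each component of $V(\pi)$ has height exactly $1$ in the integral scheme $\overline{\{x\}}$, but height-one primes of a two-dimensional domain can be maximal (e.g.\ $(pt-1)\subset\mathbb{Z}_p[t]$), so codimension $1$ does not by itself yield dimension $\dim\overline{\{x\}}-1$, nor dimension $\dim\overline{\{x\}}_K$. To convert the codimension statement into the dimension statement you need the dimension formula over the universally catenary base $V$ (as in \cite[IV, 5.6.4]{ega}, which the paper uses in the proof of \ref{gen}): at a generic point $z$ of a component of the special fibre one has $\dim\mathcal{O}_{\overline{\{x\}},z}+\td(k(z)/k)=1+\td(k(x)/K)$, and this, combined with the height computation, is what gives $\dim\spe(x)=\td(k(x)/K)$ in both directions. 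Equivalently, you could cite \cite[IV, 14.3.10]{ega} directly, which is exactly what the paper does for the converse. With these two repairs (the dimension formula for the quantitative claims, and a connectedness argument using completeness of $V$ for the single-point claim) your approach goes through, but as it stands neither direction is complete.
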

\begin{proof}
Since $X$ is a separated $T$-scheme of finite type, by a theorem of Nagata there is a proper $T$-scheme $\bar{X}$ and an open immersion $X\hookrightarrow\bar{X}$. Let $x\in X_K$ be closed. Then there is a finite extension $K\subset L$ such that $x\in X(L)\subset\bar{X}(L)$. Then $x$ gives a point in $s\in\bar{X}(V_L)$, where $V_L$ is the normalization of $V$ in $L$. Since both $\Spec(V_L)$ and $\bar{X}$ are proper over $T$, $s(\Spec(V_L))$ is closed in $\bar{X}$. It follows that the closure of $x$ in $\bar{X}$ is equal to $s(\Spec(V_L))$. So $\spe(x)$ is isomorphic to a subscheme of $\Spec(V_L\otimes_Vk)$, i.e. is a closed point or is empty.

For the converse, let $U=\Spec(A)\subset X$ be an affine open neighbourhood of $\spe(x)\in X$. If $x\notin U$, then $x\in X-U$, so $\spe(x)\in\overline{\left\{x\right\}}\subset X-U$, which is absurd. Hence $x\in U$, and $\overline{\left\{x\right\}}\cap U$ corresponds to a quotient $A\to B$ with $B$ reduced. Then $\Spec(B)$ is irreducible (otherwise $x$ lies on an irreducible component which is a proper closed subset of $\overline{\left\{x\right\}}$), so $\Spec(B)$ is integral. Moreover, the reduction of $B\otimes_{V}k$ is $k$. Since $B\otimes_VK\neq 0$, by \cite[IV, 14.3.10]{ega} we have $\dim(B\otimes_{V}K)=0$. By Noether normalization $B\otimes_{V}K$ is a finite $K$-algebra, so since $B\otimes_VK$ is a domain it is a finite field extension of $K$.
\end{proof}

\begin{lemma}\label{gen}
Let $x\in X_k$ be a closed point. The set $\spe^{-1}(x)$ is equal to the set of closed points of $\Spec(\mathcal{O}_{X,x}\otimes_VK)$, viewed as a subset of $X_K$.
\end{lemma}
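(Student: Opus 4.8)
The statement to prove is Lemma \ref{gen}: for a closed point $x\in X_k$, the fibre $\spe^{-1}(x)$ equals the set of closed points of $\Spec(\mathcal{O}_{X,x}\otimes_VK)$, viewed inside $X_K$. The plan is to prove the two inclusions separately, using the characterization of closedness via specialization provided by the preceding Lemma \ref{closed}.

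First I would set up the identification. Since $x$ is a closed point of $X_k$, I can choose an affine open $U=\Spec(A)\subset X$ containing $x$, so that $\mathcal{O}_{X,x}\otimes_VK$ is a localization of $A\otimes_VK$. A point $y\in\Spec(\mathcal{O}_{X,x}\otimes_VK)$ is the same as a point $y\in X_K$ whose closure in $X$ passes through $x$, i.e. $x\in\overline{\{y\}}$; this is exactly the condition that $y$ lies in the local ring at $x$ on the generic fibre. The content of the lemma is therefore that among such points, closedness in $\Spec(\mathcal{O}_{X,x}\otimes_VK)$ corresponds to $x$ being \emph{the whole} special-fibre part of the closure, i.e. $\spe(y)=\{x\}$.

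For the first inclusion, I would take $y\in\spe^{-1}(x)$, so $\spe(y)=\overline{\{y\}}\cap X_k=\{x\}$, a single closed point. By the converse direction of Lemma \ref{closed}, $y$ is then a closed point of $X_K$. Since $x\in\overline{\{y\}}$, the point $y$ lies in $\Spec(\mathcal{O}_{X,x}\otimes_VK)$, and being closed in $X_K$ it is a fortiori closed in this localized generic fibre. Conversely, let $y$ be a closed point of $\Spec(\mathcal{O}_{X,x}\otimes_VK)$. Because $\mathcal{O}_{X,x}\otimes_VK$ is a localization of a finitely generated $K$-algebra, a closed point here has residue field finite over $K$ (by the Nullstellensatz over the localized ring, or by noting its dimension is zero and applying Noether normalization as in the proof of \ref{closed}); hence $y$ is a closed point of $X_K$. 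By the forward direction of Lemma \ref{closed}, $\spe(y)$ is either empty or a single closed point. It cannot be empty, since $y\in\Spec(\mathcal{O}_{X,x}\otimes_VK)$ forces $x\in\overline{\{y\}}\cap X_k$; therefore $\spe(y)=\{x\}$ and $y\in\spe^{-1}(x)$.

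The main obstacle I anticipate is the correct handling of the residue-field finiteness for closed points of the localized ring $\mathcal{O}_{X,x}\otimes_VK$: one must confirm that closedness in this local (non-finite-type, since it is a localization) $K$-scheme still yields a finite residue extension and hence closedness in all of $X_K$. This is exactly the dimension-zero plus Noether-normalization argument already carried out in the proof of Lemma \ref{closed}, so I would invoke it directly rather than reprove it. The remaining steps are formal once the dictionary ``$y\in\Spec(\mathcal{O}_{X,x}\otimes_VK)$ iff $x\in\overline{\{y\}}$'' and the equivalence ``$y$ closed in $X_K$ iff $\spe(y)$ is a closed point'' are in place.
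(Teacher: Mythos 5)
Your first inclusion is fine and agrees with the paper's (you phrase membership in $\Spec(\mathcal{O}_{X,x}\otimes_VK)$ via generization, the paper via the intersection of the $U_K$ over neighbourhoods $U$ of $x$; same content), and quoting the converse direction of \ref{closed} there is exactly right. The gap is in the second inclusion, at the claim that a closed point of $\Spec(\mathcal{O}_{X,x}\otimes_VK)$ has residue field finite over $K$ ``by the Nullstellensatz over the localized ring, or by noting its dimension is zero and applying Noether normalization as in the proof of \ref{closed}''. Neither justification works. The Nullstellensatz fails for localizations of finitely generated algebras: in $\Spec\bigl(K[s,t]_{(s,t)}[1/s]\bigr)$ the prime $(t)$ is a closed point (the only prime of $K[s,t]$ strictly containing $(t)$ and contained in $(s,t)$ is $(s,t)$ itself, which has been removed since it contains $s$), yet its residue field is $K(s)$, not finite over $K$. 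As for the argument of \ref{closed}: there Noether normalization is applied to the finitely generated $K$-algebra $B\otimes_VK$ only after one already knows that $\spe(y)$ is a single closed point, which is what forces $\dim(B\otimes_VK)=0$ via [EGA IV, 14.3.10]; here that is precisely the conclusion you are trying to reach, and the residue field $L$ of your closed point is a priori only essentially of finite type over $K$, so neither that dimension count nor Noether normalization applies to it.

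This finiteness claim is the real content of the lemma, and it is where the paper does work that your proposal skips: it forms the image $C$ of $\mathcal{O}_{X,x}$ in $L$, observes that $C$ is a local domain with residue field $k(x)=k$ and $C[1/\pi]=L$ (this is exactly where closedness of $y$ in the localized scheme enters), invokes the Artin--Tate theorem [EGA 0, 16.3.3] to get $\dim C\leq 1$, and then uses the dimension formula over the universally catenary $V$, namely $\dim V+\td(L/K)=\dim C+\td(k(x)/k)\leq 1$, to conclude $\td(L/K)=0$. What makes this work --- and what separates it from the counterexample above --- is that the element inverted is the uniformizer $\pi$ and that $k(x)$ is algebraic over $k$, so the single dimension gained in passing from the special to the generic fibre exactly exhausts the Artin--Tate bound. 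Once $L/K$ is known to be finite, the remainder of your argument (deducing that $y$ is closed in $X_K$, applying the forward direction of \ref{closed}, and noting $x\in\spe(y)$) is correct, and is in fact slightly cleaner than the paper's, which instead reproves the relevant finiteness by a direct $\pi$-adic completeness argument rather than citing \ref{closed}.
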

\begin{proof}
Let $U$ be an open neighbourhood of $x\in X_k$. If $y\in X_K$ is a closed point such that $\spe(y)=x$ and $y\notin U$, then $x\in\overline{\left\{y\right\}}\not\subset U$, a contradiction. Hence $U$ contains $\spe^{-1}(x)$. Since $\Spec(\mathcal{O}_{X,x}\otimes_VK)=\cap_{U\ni x}U_K$, where the intersection is taken over all open neighbourhoods of $x$, it follows that $\Spec(\mathcal{O}_{X,x}\otimes_VK)$ contains $\spe^{-1}(x)$.

For the converse, pick a closed point of $y\in\Spec(\mathcal{O}_{X,x}\otimes_VK)$. It gives a quotient $\mathcal{O}_{X,x}\otimes_VK\to L$ where $L=K(y)$ is the residue field at $y$. We  first claim that $L$ is a finite extension of $K$. Let $C$ be the image of $\mathcal{O}_{X,x}$ in $L$. It is a local ring with residue field $k(x)=k$ satisfying $C[1/\pi]=L$. So by a theorem of Artin-Tate (\cite[0, 16.3.3]{ega}) $C$ is a local domain of dimension at most 1. Since $V$ is universally catenary, by \cite[IV, 5.6.4]{ega} we have $\dim V+\td(L/K)=\dim C+\td(k(x)/k)\leq 1$, hence $\td(L/K)=0$ as claimed.

Now for every small enough affine open $\Spec(A)=U\subset X$ containing $x$, $y$ induces a quotient $A\otimes_VK\to L$. We claim that the image $V'$ of $A$ in $L$ has special fibre over $V$ consisting of a single point. Note that $\pi$ is not a unit in $V'$ (otherwise $\pi$ would be a unit in $C$). Now, since $V'$ is an integral domain and $V$ is henselian, it suffices to show that $V'$ is a finite $V$-module, for then we know that $\Spec(V'\otimes_Vk)$ is connected and finite, hence a single point. To see this, first note that since $V'$ is a one-dimensional domain ($\dim(V'_{\mathfrak{p}})\leq\dim(V)+\td(L/K)=1$ for any prime ideal $\mathfrak{p}\subset V'$), $V'\otimes_{V}k$ is a zero-dimensional $k$-algebra of finite type, so by Noether normalization it is finite. Lifting a $k$-basis $e_1,...,e_n$ of $V'\otimes_{V}k$ we see that $V'\subset \sum_{i=1}^nVe_i+\pi V'$, hence $V'\subset\sum_{i=1}^nVe_i+\pi^mV'$ for all $m\geq 1$. Since $V'$ is $\pi$-adically separated (by Krull's Intersection Theorem) and $V$ complete, this implies that $V'\subset\widehat{V'}=\sum_{i=1}^nVe_i$, whence $V'=\sum_{i=1}^nVe_i$. This proves the claim. Note that the specialization $\spe(y)$ of $y$ in $U$ is given by the quotient $A\to V'\otimes_Vk$, hence $\spe(y)$ is a closed point in $U_k$. Since this holds for every small enough neighbourhood $U$ of $x$, it follows that $\spe(y)=x$, as required.
\end{proof}

\begin{corollary}\label{sp}
Let $x\in X_k$ be closed. An open subset $U_0\subset X_K$ is of the form $U_0=U_{K}$ for some open neighbourhood $U$ of $x$ if and only if $\spe^{-1}_X(x)\subset U_0$.
\end{corollary}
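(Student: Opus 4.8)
The plan is to prove \ref{sp} by combining the two preceding lemmas \ref{gen} and \ref{closed}. The statement characterizes which opens $U_0 \subset X_K$ arise as the generic fibre $U_K$ of an open neighbourhood $U$ of the closed point $x \in X_k$, and the criterion is simply that $U_0$ contains $\spe^{-1}(x)$. One direction is essentially trivial: if $U_0 = U_K$ with $U \ni x$ open, then by \ref{gen} the set $\spe^{-1}(x)$ equals the set of closed points of $\Spec(\mathcal{O}_{X,x} \otimes_V K)$, and since $\Spec(\mathcal{O}_{X,x}) = \bigcap_{U \ni x} U$, every such closed point lies in $U_K = U_0$. So I would dispatch this direction first with a one-line appeal to \ref{gen}.

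For the converse, assume $\spe^{-1}_X(x) \subset U_0$ and I must produce an open $U \ni x$ in $X$ with $U_K = U_0$. The natural candidate is $U := U_0 \cup (X \setminus \overline{X_K \setminus U_0})$, or more concretely the complement in $X$ of the closure of the complementary part of the special fibre: let $C = X_K \setminus U_0$, a closed subset of $X_K$, and set $U := X \setminus \overline{C}$, where the closure is taken in $X$. By construction $U$ is open in $X$, and $U \cap X_K = X_K \setminus \overline{C} \cap X_K = X_K \setminus C = U_0$, since $C$ is already closed in $X_K$. Thus $U_K = U_0$ automatically; the only thing requiring proof is that $x \in U$, i.e. that $x \notin \overline{C}$.

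The heart of the argument, and the step I expect to be the main obstacle, is showing $x \notin \overline{C}$. Suppose for contradiction that $x \in \overline{C}$. Since $X$ is of finite type over $V$, hence noetherian, $\overline{C}$ has finitely many irreducible components, each the closure of some point $\eta$ of $C = X_K \setminus U_0$. If $x \in \overline{C}$, then $x$ lies in the closure $\overline{\{\eta\}}$ of some point $\eta \in X_K \setminus U_0$, and I want to descend along this specialization to a \emph{closed} point $y \in X_K$ with $x \in \overline{\{y\}}$, i.e. $\spe(y) = x$, while keeping $y \notin U_0$. Concretely, $x \in \overline{\{\eta\}} \cap X_k$ is nonempty, so $\eta$ specializes to $x$; one then chooses a closed point $y$ of $\Spec(\mathcal{O}_{X,x} \otimes_V K)$ lying in $\overline{\{\eta\}}$ — such a point exists because $\overline{\{\eta\}} \cap \Spec(\mathcal{O}_{X,x} \otimes_V K)$ is a nonempty scheme of finite type over $K$ (being a subscheme of the fibre), so it has a closed point $y$, and by \ref{gen} this $y$ satisfies $\spe(y) = x$. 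But then $y \in \spe^{-1}(x) \subset U_0$ by hypothesis, whereas $y \in \overline{\{\eta\}} \subset \overline{C}$ forces $y \notin U_0$ (one checks $y \in C$, using that $y$ specializes $\eta \notin U_0$ and $U_0$ is open, so $y \notin U_0$). This contradiction shows $x \notin \overline{C}$, completing the proof. The delicate point to get right is the existence and placement of the closed point $y$: I must ensure it lies both in the closure of a point outside $U_0$ and in the local fibre $\Spec(\mathcal{O}_{X,x}\otimes_V K)$ so that \ref{gen} applies to give $\spe(y)=x$.
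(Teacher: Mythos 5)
Your proof is correct and follows essentially the same route as the paper: both directions are reduced to Lemma \ref{gen}, and the nontrivial direction is handled in both cases by taking the closure in $X$ of $C=X_K\setminus U_0$ and ruling out $x\in\overline{C}$ by producing a closed point of $\spe^{-1}(x)$ that lies in $C$. One small slip in your justification: $\overline{\{\eta\}}\cap\Spec(\mathcal{O}_{X,x}\otimes_VK)$ is \emph{not} of finite type over $K$ (it is the localization $\Spec(\mathcal{O}_{\overline{\{\eta\}},x}\otimes_VK)$, not a subscheme of finite type of the fibre); the conclusion nevertheless stands, since this is the spectrum of a nonzero ring and hence has a closed point, which, being a closed point of a closed subscheme of $\Spec(\mathcal{O}_{X,x}\otimes_VK)$, is closed there as well, so \ref{gen} applies to it.
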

\begin{proof}
We first show the if part. Let $Y_0$ be the complement of $U_0$ in $X_{K}$ and let $Y$ be its closure in $X$. If $x\in Y$, then by the last lemma any closed point of $y\in\Spec(\mathcal{O}_{Y,x}\otimes_{V}K)$ is a closed point of $Y_0$ satisfying $\spe_X(y)=x$. Since $\spe^{-1}_X(x)\subset U_0$ by assumption, this is absurd. So $U:=X-Y$ contains $x$. For the converse, it suffices to note that if $x\in U$, then by the last lemma $\spe^{-1}_X(x)\subset U$.
\end{proof}

\subsection{Hyperplanes through a point}\label{hyperplanespoint}
Let $P\in\mathbb{P}^n(k)$ and let $I_P\subset\mathcal{O}_{\mathbb{P}^n_T}$ be its ideal sheaf. For a sheaf $\mathcal{F}$ on $\mathbb{P}^n_T$ we write $\Gamma(\mathcal{F}):=\Gamma(\mathbb{P}^n_T,\mathcal{F})$ to simplify. Then for any integer $d$ we have an exact sequence
\[ 0\to I_P(d)\to\mathcal{O}_{\mathbb{P}^n_T}(d)\to k(P)\to 0 \]
so taking global sections we get a left-exact sequence
\begin{equation}\label{exseq:globalideal}
0\to \Gamma(I_P(d))\to\Gamma(\mathcal{O}_{\mathbb{P}^n_T}(d))\to k(P)\to 0.
\end{equation}

\begin{lemma}\label{hyperplanelocus}
If $d\geq 0$, then the sequence \ref{exseq:globalideal} is exact.
\end{lemma}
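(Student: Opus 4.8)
The plan is to observe that the sequence \ref{exseq:globalideal} is obtained by applying the left-exact functor $\Gamma(\mathbb{P}^n_T,-)$ to the short exact sequence of sheaves $0\to I_P(d)\to\mathcal{O}_{\mathbb{P}^n_T}(d)\to k(P)\to 0$, so left-exactness is automatic (indeed the sequence was already asserted to be left-exact before the lemma). Thus the only point to establish is \emph{surjectivity} of the evaluation map $\Gamma(\mathcal{O}_{\mathbb{P}^n_T}(d))\to k(P)$ on the right; equivalently, I must produce a single global section of $\mathcal{O}_{\mathbb{P}^n_T}(d)$ whose image in $k(P)$ is nonzero.

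To do this, first I would recall the standard identification $\Gamma(\mathbb{P}^n_T,\mathcal{O}(d))=V[X_0,\dots,X_n]_d$, the free $V$-module of homogeneous polynomials of degree $d$ in the homogeneous coordinates. This is precisely where the hypothesis $d\geq 0$ enters, since for $d<0$ this module vanishes and no nonzero section could exist. Under this identification the map to $k(P)$ is evaluation at $P$ (followed by reduction modulo $\pi$, using that $P$ is $k$-rational so $k(P)=V/\pi V=k$). Since $P\in\mathbb{P}^n(k)$ is a point of projective space, its homogeneous coordinates $(a_0,\dots,a_n)\in k^{n+1}$ are not all zero, so I may choose an index $j$ with $a_j\neq 0$.

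The monomial $X_j^d\in\Gamma(\mathbb{P}^n_T,\mathcal{O}(d))$ then does not vanish at $P$, because $a_j^d\neq 0$, so its image in $k(P)$ is nonzero (for $d=0$ one simply takes the constant section $1\in V=\Gamma(\mathbb{P}^n_T,\mathcal{O})$, whose reduction modulo $\pi$ is $1\neq 0$). As $k(P)$ is one-dimensional over $k$, a single nonzero element spans it, and so the evaluation map is surjective. This yields exactness of \ref{exseq:globalideal} on the right and finishes the argument.

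I expect no serious obstacle here: the only care needed is in correctly identifying $k(P)$ with the residue field $k$—legitimate precisely because $P$ is $k$-rational—and in recognizing that left-exactness is built into the way the sequence is written, so that the entire content of the lemma is the elementary surjectivity statement above. A cohomological alternative, deducing surjectivity from the vanishing of the connecting map into $H^1(\mathbb{P}^n_T,I_P(d))$ via the long exact cohomology sequence, is available but unnecessarily heavy for $d\geq 0$.
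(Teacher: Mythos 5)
Your proof is correct and follows essentially the same route as the paper: both arguments reduce to showing the evaluation map $\Gamma(\mathcal{O}_{\mathbb{P}^n_T}(d))\to k(P)$ is nonzero (hence surjective, as $k(P)\cong k$ is a simple $V$-module) by exhibiting a degree-$d$ form not vanishing at $P$ — the paper normalizes coordinates so that $P=(1:0:\cdots:0)$, while you pick a monomial $X_j^d$ with $a_j\neq 0$, which is the same idea. No gaps.
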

\begin{proof}
It suffices to show $\Gamma(I_P(d))\neq\Gamma(\mathcal{O}_{\mathbb{P}^n_T}(d))$. For $d=0$ this is obvious, so assume $d\geq 1$. If $\Gamma(I_P(d))=\Gamma(\mathcal{O}_{\mathbb{P}^n_T}(d))$ is bijective, then every hypersurface of degree $d$ vanishes at $P$, equivalently every hyperplane in $\mathbb{P}(\Gamma(\mathcal{O}(d)))$ vanishes on the image of $P$. But in a suitable choice of coordinates we have $P=(1:0:\cdots:0)\in\mathbb{P}(\Gamma(\mathcal{O}(d)))\otimes_Vk$ and the hyperplane given by the first coordinate does not vanish at $P$. So $\Gamma(I_P(d))\neq\Gamma(\mathcal{O}_{\mathbb{P}^n_T}(d))$.
\end{proof}

Since $H^1(\mathbb{P}^n_T,\mathcal{O}_{\mathbb{P}^n_T}(d))=0$ for $d\geq 0$ we immediately find

\begin{corollary}
$H^1(\mathbb{P}^n_T,I_P(d))=0$ for $d\geq 0$.
\end{corollary}

Since $I_P(d)$ is $\pi$-torsion free, taking cohomology in the exact sequence
\[ 0\to I_P(d)\overset{\cdot\pi}{\to} I_P(d)\to I_P(d)\otimes_Vk\to 0 \]
we obtain

\begin{corollary}\label{corhyperplanelocus}
$\Gamma(I_P(d))\otimes_Vk=\Gamma(I_P(d)\otimes_Vk)$ for $d\geq 0$.
\end{corollary}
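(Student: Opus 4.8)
The statement follows immediately by feeding the displayed short exact sequence into the long exact cohomology sequence, so the plan is simply to organize that computation. First I would record that the sequence
\[ 0\to I_P(d)\overset{\cdot\pi}{\to} I_P(d)\to I_P(d)\otimes_Vk\to 0 \]
is genuinely short exact: the map $\cdot\pi$ is injective precisely because $I_P(d)$, being a subsheaf of the $V$-flat sheaf $\mathcal{O}_{\mathbb{P}^n_T}(d)$, is $\pi$-torsion free, and its cokernel is by definition $I_P(d)\otimes_Vk$.

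Next I would apply the global sections functor on $\mathbb{P}^n_T$. Writing $\Gamma(-)=H^0(\mathbb{P}^n_T,-)$, the associated long exact sequence begins
\[ 0\to\Gamma(I_P(d))\overset{\cdot\pi}{\to}\Gamma(I_P(d))\to\Gamma(I_P(d)\otimes_Vk)\to H^1(\mathbb{P}^n_T,I_P(d)). \]
The key input is the preceding corollary, which gives $H^1(\mathbb{P}^n_T,I_P(d))=0$ for $d\geq 0$. This forces the connecting map to vanish, so the four-term sequence truncates to the short exact sequence
\[ 0\to\Gamma(I_P(d))\overset{\cdot\pi}{\to}\Gamma(I_P(d))\to\Gamma(I_P(d)\otimes_Vk)\to 0. \]

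Finally I would read off the cokernel: exactness on the right identifies $\Gamma(I_P(d)\otimes_Vk)$ with $\Gamma(I_P(d))/\pi\Gamma(I_P(d))$, which is exactly $\Gamma(I_P(d))\otimes_V k$ since $k=V/\pi V$. This is the asserted equality. There is essentially no obstacle here beyond the $H^1$-vanishing already established; the only point requiring a moment of care is the injectivity of $\cdot\pi$ on sections, which is immediate from the $\pi$-torsion freeness noted in the first step.
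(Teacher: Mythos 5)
Your proof is correct and is essentially identical to the paper's: the paper likewise tensors the $\pi$-torsion-free sheaf $I_P(d)$ by the sequence $0\to V\overset{\cdot\pi}{\to}V\to k\to 0$ and feeds the resulting short exact sequence of sheaves into the long exact cohomology sequence, using the vanishing $H^1(\mathbb{P}^n_T,I_P(d))=0$ from the preceding corollary. Nothing further is needed.
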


For $d\geq 0$ we can tensor the exact sequence \ref{exseq:globalideal} with $k$ to obtain exact sequences
\begin{equation}\label{exseq:globalideal1} 0\to k(P)\to\Gamma(I_P(d))\otimes_Vk\to\Gamma(I_P(d)\cdot\mathcal{O}_{\mathbb{P}^n_k})\to 0
\end{equation}
\begin{equation}\label{exseq:globalideal2} 0\to \Gamma(I_P(d)\cdot\mathcal{O}_{\mathbb{P}^n_k})\to \Gamma(\mathcal{O}_{\mathbb{P}^n_k}(d))\to k(P)\to 0.
\end{equation}

\begin{proposition}\label{global}
$I_P(d)$ and $I_P(d)\cdot\mathcal{O}_{\mathbb{P}^n_k}$ are generated by global sections for $d\geq 1$.
\end{proposition}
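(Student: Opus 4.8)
The plan is to prove global generation by exhibiting explicit generating sections and checking the resulting map of coherent sheaves is surjective on stalks. The key reduction is that \emph{the closed points of $\mathbb{P}^n_T$ all lie in the special fibre $\mathbb{P}^n_k$}: a closed point maps to a closed point of $T=\Spec(V)$, and by properness of $\mathbb{P}^n_T\to T$ any point lying over $\Spec(K)$ has closure meeting $\mathbb{P}^n_k$, hence is not closed. Therefore, to show a map $\mathcal{O}^{\oplus N}\to\mathcal{F}$ of coherent sheaves is surjective, it suffices, by Nakayama's lemma together with the fact that a nonempty closed subset of the Noetherian scheme $\mathbb{P}^n_T$ contains a closed point, to check surjectivity on the fibre $\mathcal{F}\otimes k(x)$ at every closed point $x\in\mathbb{P}^n_k$.

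After choosing homogeneous coordinates $X_0,\dots,X_n$ so that $P=(1:0:\cdots:0)$, I would take as candidate global sections of $I_P(d)$ the elements
\[ \pi X_0^d,\qquad X_i m\quad(1\leq i\leq n,\ \deg m=d-1). \]
Each lies in $\Gamma(\mathcal{O}_{\mathbb{P}^n_T}(d))$ (a free $V$-module on the degree-$d$ monomials) and maps to $0$ in the fibre $k(P)$ of (\ref{exseq:globalideal}) — either because it is divisible by some $X_i$ with $i\geq 1$, which vanishes at $P$, or because it is divisible by $\pi$, which vanishes in $k(P)=k$ — so all are sections of $I_P(d)$. I would then verify generation fibrewise. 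At a closed point $x\neq P$ one has $I_P=\mathcal{O}$ near $x$, and since $x\neq P$ some $X_j$ with $j\geq 1$ is nonzero at $x$, so the section $X_j^d=X_j\cdot X_j^{d-1}$ already generates the line $\mathcal{O}(d)\otimes k(x)$. At $x=P$, regularity of $\mathbb{P}^n_T$ gives $I_{P,P}=\mathfrak{m}_P=(x_1,\dots,x_n,\pi)$, so $I_P(d)\otimes k(P)\cong\mathfrak{m}_P/\mathfrak{m}_P^2$ has dimension $n+1$, and the sections $\pi X_0^d,\,X_1X_0^{d-1},\dots,X_nX_0^{d-1}$ reduce (dividing by the local unit $X_0^d$) to $\pi,x_1,\dots,x_n$, which span it. This gives global generation of $I_P(d)$ for $d\geq 1$.

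For $I_P(d)\cdot\mathcal{O}_{\mathbb{P}^n_k}$ I would run the identical argument over the field $k$: the images of the sections $X_im$ generate this ideal sheaf at every closed point, where now $\mathfrak{m}_x=(x_1,\dots,x_n)$ carries no $\pi$, so the section $\pi X_0^d$ is not needed; the surjection $\Gamma(I_P(d))\otimes_Vk\twoheadrightarrow\Gamma(I_P(d)\cdot\mathcal{O}_{\mathbb{P}^n_k})$ furnished by (\ref{exseq:globalideal1}) exhibits these as reductions of the sections used above. I expect the only genuinely delicate point to be the behaviour at $P$ itself: because $P$ lies in the special fibre, its ideal sheaf on $\mathbb{P}^n_T$ acquires the extra generator $\pi$, so the purely linear sections $X_iX_0^{d-1}$ that suffice over a field do not generate the $\pi$-direction of $\mathfrak{m}_P/\mathfrak{m}_P^2$ — precisely the discrepancy recorded by the skyscraper $k(P)$ in (\ref{exseq:globalideal1}) — and it is repaired by adjoining the single section $\pi X_0^d$.
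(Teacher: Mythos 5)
Your proof is correct, but it takes a genuinely different route from the paper. The paper first proves global generation of $I_P(d)\cdot\mathcal{O}_{\mathbb{P}^n_k}$ by Castelnuovo--Mumford regularity (checking $d$-regularity via Serre's cohomology computations and the sequence (\ref{exseq:globalideal2})), and then deduces the statement for $I_P(d)$ by reducing modulo $\pi$: it uses Corollary \ref{corhyperplanelocus} ($\Gamma(I_P(d))\otimes_Vk=\Gamma(I_P(d)\otimes_Vk)$) together with Nakayama over the proper scheme $\mathbb{P}^n_T$, and then handles the extension $0\to k(P)\to I_P(d)\otimes_Vk\to I_P(d)\cdot\mathcal{O}_{\mathbb{P}^n_k}\to 0$ using that $k(P)$ is a skyscraper and that the map on global sections in (\ref{exseq:globalideal1}) is surjective. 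You instead bypass all cohomology: you observe that every closed point of $\mathbb{P}^n_T$ lies in the special fibre, reduce surjectivity of the evaluation map to a fibrewise check at such points, and verify it directly from the explicit generators $\pi X_0^d$ and $X_im$ together with the presentation $\mathfrak{m}_P=(x_1,\dots,x_n,\pi)$. Both arguments are sound; the paper's is more robust (it would apply verbatim to any $d$-regular ideal sheaf, with no explicit generators available), while yours is more elementary and self-contained, and has the side benefit of exhibiting the explicit basis of $\Gamma(I_P(2))$ --- the monomials $x_ix_j$ ($i\neq j$), $x_i^2$ ($i\geq 1$), and $\pi x_0^2$ --- which the paper ends up re-deriving by hand anyway in the proof of Proposition \ref{key}. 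One small remark: you do not actually need regularity of $\mathbb{P}^n_T$ at $P$ to compute the dimension of $\mathfrak{m}_P/\mathfrak{m}_P^2$; for surjectivity onto the fibre $I_P(d)\otimes k(P)\cong\mathfrak{m}_P/\mathfrak{m}_P^2$ it suffices that $\pi,x_1,\dots,x_n$ generate $\mathfrak{m}_P$, which is immediate.
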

\begin{proof}
We first show that $I_P(d)\cdot\mathcal{O}_{\mathbb{P}^n_k}$ is generated by global sections for $d\geq 1$. By \cite[Lecture 14, Prop.]{mumford} it suffices to check that $I_P\cdot\mathcal{O}_{\mathbb{P}^n_k}$ is $d$-regular, i.e. $H^i(\mathbb{P}^n_k,I_P\cdot\mathcal{O}_{\mathbb{P}^n_k}(d-i))=0$ for $i\geq 1$. To see this, note that since $d\geq 1$, by \ref{exseq:globalideal2} for $i\geq 1$ we have $H^i(\mathbb{P}^n_k,I_P\cdot\mathcal{O}_{\mathbb{P}^n_k}(d-i))=H^i(\mathbb{P}^n_k,\mathcal{O}_{\mathbb{P}^n_k}(d-i))$, and by Serre's computation of the cohomology of projective space we know that this is zero for $0<i<n$, and that $H^n(\mathbb{P}^n_k,\mathcal{O}_{\mathbb{P}^n_k}(d-n))$ is the $k$-dual of $H^0(\mathbb{P}^n_k,\mathcal{O}_{\mathbb{P}^n_k}(-d-1))=0$.

For $I_P(d)$, since $\mathbb{P}^n_T$ is proper over $T$, by \ref{corhyperplanelocus} and Nakayama's lemma it will suffice to show that $I_P(d)\otimes_Vk$ is generated by global sections if $d\geq 1$. Now, we have an exact sequence of $\mathcal{O}_{\mathbb{P}^n_k}$-modules
\[ 0\to k(P)\to I_P(d)\otimes_Vk\to I_P(d)\cdot\mathcal{O}_{\mathbb{P}^n_k}\to 0 \]
and since $k(P)$ is a skyscraper sheaf and $I_P(d)\cdot\mathcal{O}_{\mathbb{P}^n_k}$ is globally generated for $d\geq 1$, by \ref{exseq:globalideal1} this implies the result.
\end{proof}

\begin{corollary}\label{diagram}
There is a commutative diagram of canonical morphisms
\[ \xymatrix{
\mathbb{P}^n_T\setminus\{P\} \ar[r] & \mathbb{P}(\Gamma(I_P(2))) \\
\mathbb{P}^n_k\setminus\{P\} \ar[r] \ar[u] & \mathbb{P}(\Gamma(I_P(2)\cdot\mathcal{O}_{\mathbb{P}^n_k})) \ar[u]
} \]
where the vertical maps are closed immersions.
\end{corollary}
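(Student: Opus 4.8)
The plan is to construct the four morphisms of the square, recognise each as a standard linear-system (or base-change) map, and then reduce commutativity to the compatibility of the morphism-to-projective-space construction with restriction to the special fibre.

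First I would build the two horizontal arrows. By Proposition \ref{global} the sheaf $I_P(2)$ is generated by its global sections. Since $I_P=\mathcal{O}_{\mathbb{P}^n_T}$ away from $P$, the inclusion $I_P(2)\hookrightarrow\mathcal{O}_{\mathbb{P}^n_T}(2)$ is an isomorphism over $\mathbb{P}^n_T\setminus\{P\}$, so the tautological evaluation $\Gamma(I_P(2))\otimes_V\mathcal{O}\to\mathcal{O}(2)$ is surjective over $\mathbb{P}^n_T\setminus\{P\}$ (its image at $P$ is $\mathfrak{m}_P\mathcal{O}(2)_P$, so $P$ is exactly the base locus). Together with the line bundle $\mathcal{O}(2)$ this surjection defines the canonical morphism $\mathbb{P}^n_T\setminus\{P\}\to\mathbb{P}(\Gamma(I_P(2)))$ in the quotient convention for $\mathbb{P}(-)$ fixed in \ref{setupgrass}. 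The bottom arrow is built identically from the globally generated sheaf $I_P(2)\cdot\mathcal{O}_{\mathbb{P}^n_k}$ (again Proposition \ref{global}), which coincides with $\mathcal{O}_{\mathbb{P}^n_k}(2)$ off $P$.

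Next the vertical arrows. The left one is the inclusion of the special fibre: $\mathbb{P}^n_k\setminus\{P\}=(\mathbb{P}^n_T\setminus\{P\})\times_T\Spec k$ is a closed subscheme, as $\Spec k\hookrightarrow T$ is the closed immersion cut out by $\pi$. For the right one, Corollary \ref{corhyperplanelocus} identifies $\Gamma(I_P(2))\otimes_Vk$ with $\Gamma(I_P(2)\otimes_Vk)$, and the short exact sequence \ref{exseq:globalideal1} gives a surjection $\Gamma(I_P(2))\otimes_Vk\twoheadrightarrow\Gamma(I_P(2)\cdot\mathcal{O}_{\mathbb{P}^n_k})$ with kernel $k(P)$. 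A surjection of vector spaces induces a closed immersion of the associated projective spaces in the quotient convention, yielding $\mathbb{P}(\Gamma(I_P(2)\cdot\mathcal{O}_{\mathbb{P}^n_k}))\hookrightarrow\mathbb{P}(\Gamma(I_P(2))\otimes_Vk)$; composing with the closed immersion of the special fibre $\mathbb{P}(\Gamma(I_P(2))\otimes_Vk)\hookrightarrow\mathbb{P}(\Gamma(I_P(2)))$ produces the right vertical closed immersion.

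Finally, and this is the only substantive point, I would check commutativity. Since the morphism-to-projective-space construction commutes with base change, restricting the top arrow to $\mathbb{P}^n_k\setminus\{P\}$ amounts to applying $-\otimes_Vk$ to its defining surjection $\Gamma(I_P(2))\otimes_V\mathcal{O}\twoheadrightarrow\mathcal{O}(2)$. The resulting surjection factors through the image sheaf $I_P(2)\cdot\mathcal{O}_{\mathbb{P}^n_k}$ and its global sections $\Gamma(I_P(2)\cdot\mathcal{O}_{\mathbb{P}^n_k})$, which is exactly the datum defining the bottom arrow; hence the restriction of the top arrow equals the bottom arrow followed by the right vertical closed immersion, as required. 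The main obstacle is bookkeeping the conventions: one must verify that the surjection of section spaces points the right way to give a \emph{closed} immersion, and that the restriction of $\Gamma(I_P(2))$ lands in precisely the subspace $\Gamma(I_P(2)\cdot\mathcal{O}_{\mathbb{P}^n_k})$ (neither larger nor smaller), the latter being controlled by the exactness of \ref{exseq:globalideal1}. Once these are pinned down, commutativity is a formal consequence of base-change compatibility of linear systems.
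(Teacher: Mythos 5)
Your proposal is correct and follows the same route as the paper: the horizontal arrows come from the global generation of $I_P(2)$ and $I_P(2)\cdot\mathcal{O}_{\mathbb{P}^n_k}$ (Proposition \ref{global}) together with the identification of these sheaves with $\mathcal{O}(2)$ away from $P$, the right vertical closed immersion comes from the surjection in the sequence \ref{exseq:globalideal1} via \ref{corhyperplanelocus}, and commutativity is base-change compatibility of the linear-system construction. The paper leaves the vertical arrows and the commutativity check as "clear (cf.\ \ref{exseq:globalideal}, \ref{exseq:globalideal1}, \ref{exseq:globalideal2})"; you have simply written out those details, including the correct direction of the induced map of projective bundles in the quotient convention.
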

\begin{proof}
Let us define the morphisms. For the top map, by \ref{global} on $\mathbb{P}^n_T$ we have the quotient
\[ \Gamma(I_P(2))\otimes_V\mathcal{O}_{\mathbb{P}^n_T}\twoheadrightarrow I_P(2) \]
and $I_P(2)$ is isomorphic to the line bundle $\mathcal{O}_{\mathbb{P}^n_T}(2)$ on $\mathbb{P}^n_T\setminus\{P\}$, so we get a corresponding morphism $\phi:\mathbb{P}^n_T\setminus\{P\}\to \mathbb{P}(\Gamma(I_P(2)))$. Similarly for the bottom map. It is clear (cf. \ref{exseq:globalideal}, \ref{exseq:globalideal1}, \ref{exseq:globalideal2}) that these maps fit in a commutative diagram as above.
\end{proof}

In fact, the horizontal maps of the diagram of \ref{diagram} are immersions. This is a consequence of the following key result.

\begin{proposition}\label{key}
Let $\tilde{\mathbb{P}}$ be the blow up of $\mathbb{P}^n_T$ at the point $P$, and $E\subset\tilde{\mathbb{P}}$ the exceptional divisor.
\begin{enumerate}[(i)]
\item There is a canonical closed immersion $\phi:\tilde{\mathbb{P}}\to\mathbb{P}(\Gamma(I_P(2)))$ extending the morphism of \ref{diagram}.
\item $\phi(\tilde{\mathbb{P}})\cap\mathbb{P}(\Gamma(I_P(2)\cdot\mathcal{O}_{\mathbb{P}^n_k}))$ is isomorphic to the blow up of $\mathbb{P}_k^n$ at $P$.
\item $\phi(E)$ is a linear subvariety of $\mathbb{P}(\Gamma(I_P(2)))\otimes_Vk$.
\end{enumerate}
\end{proposition}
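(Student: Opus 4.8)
The plan is to realize $\phi$ as the morphism attached to the line bundle $\tilde L:=\nu^*\mathcal{O}_{\mathbb{P}^n_T}(2)\otimes\mathcal{O}_{\tilde{\mathbb{P}}}(-E)$ on $\tilde{\mathbb{P}}$, where $\nu:\tilde{\mathbb{P}}\to\mathbb{P}^n_T$ denotes the blow-up morphism. First I would observe that since $\nu^{-1}I_P\cdot\mathcal{O}_{\tilde{\mathbb{P}}}=\mathcal{O}_{\tilde{\mathbb{P}}}(-E)$, the global generation of $I_P(2)$ furnished by \ref{global} pulls back to a surjection $\Gamma(I_P(2))\otimes_V\mathcal{O}_{\tilde{\mathbb{P}}}\twoheadrightarrow\tilde L$, and this defines a $T$-morphism $\phi:\tilde{\mathbb{P}}\to\mathbb{P}(\Gamma(I_P(2)))$. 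Over $\tilde{\mathbb{P}}\setminus E\cong\mathbb{P}^n_T\setminus\{P\}$ one has $\tilde L\cong\mathcal{O}(2)$ with the same generating sections, so $\phi$ restricts to the morphism of \ref{diagram}, as required. The computation $\nu_*\mathcal{O}_{\tilde{\mathbb{P}}}(-E)=I_P$ (the maximal ideal of a regular point is integrally closed) moreover identifies $\Gamma(\tilde{\mathbb{P}},\tilde L)$ with $\Gamma(I_P(2))$, so $\phi$ is the morphism of a complete linear system.

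For the closed-immersion assertion in (i), the key point is that $\tilde L$ is very ample over $T$. Since $P$ is a regular point (hence regularly embedded, of codimension $n+1$) of the regular scheme $\mathbb{P}^n_T$, the Rees algebra $\bigoplus_d I_P^d$ is generated in degree one, so $\mathcal{O}_{\tilde{\mathbb{P}}}(-E)$ is very ample relative to $\nu$; as $\mathcal{O}(2)$ is very ample for $\mathbb{P}^n_T\to T$, the product $\tilde L$ is very ample for $\tilde{\mathbb{P}}\to T$ by \cite[II, 4.4.10]{ega}. Being globally generated by $\Gamma(I_P(2))$, the morphism $\phi$ refines this very ample immersion (the latter factors through $\phi$ followed by a linear projection), so $\phi$ is a monomorphism; and $\phi$ is proper, $\tilde{\mathbb{P}}$ being proper over $T$ and the target separated over $T$. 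A proper monomorphism is a closed immersion, which proves (i). (Being integral and dominating $T$, $\tilde{\mathbb{P}}$ is moreover $V$-flat.) Part (iii) is then immediate: $\nu$ contracts $E$ to $P$, so $\nu^*\mathcal{O}(2)|_E$ is trivial and $\tilde L|_E\cong\mathcal{O}_{\tilde{\mathbb{P}}}(-E)|_E\cong\mathcal{O}_E(1)$ on $E=\mathbb{P}(\mathfrak{m}_P/\mathfrak{m}_P^2)\cong\mathbb{P}^n_k$; as $\phi|_E$ is thus a closed immersion given by a linear subsystem of $|\mathcal{O}(1)|$, its image $\phi(E)$ is a linear subvariety of $\mathbb{P}(\Gamma(I_P(2)))\otimes_Vk$.

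For (ii), I would first run (i) over the residue field $k$: the same argument applied to $P\in\mathbb{P}^n_k$ produces a closed immersion $\phi_k:\mathrm{Bl}_P\mathbb{P}^n_k\hookrightarrow\mathbb{P}(\Gamma(I_P(2)\cdot\mathcal{O}_{\mathbb{P}^n_k}))$, and by \ref{exseq:globalideal1} the target is a hyperplane $H\subset\mathbb{P}(\Gamma(I_P(2)))\otimes_Vk$. The strict transform $\widetilde{\mathbb{P}^n_k}$ of the Cartier divisor $\mathbb{P}^n_k=V(\pi)$ is a closed subscheme of $\tilde{\mathbb{P}}$ isomorphic to $\mathrm{Bl}_P\mathbb{P}^n_k$, and compatibility of the two constructions modulo $\pi$ (via \ref{diagram}) identifies $\phi|_{\widetilde{\mathbb{P}^n_k}}$ with $\phi_k$, whence $\phi(\widetilde{\mathbb{P}^n_k})\subseteq\phi(\tilde{\mathbb{P}})\cap H$. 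For the reverse inclusion I would use that the scheme-theoretic special fibre decomposes as the divisor $\tilde{\mathbb{P}}_k=E+\widetilde{\mathbb{P}^n_k}$, so that $\phi(\tilde{\mathbb{P}})\cap H$ is supported on $\phi(E)\cup\phi(\widetilde{\mathbb{P}^n_k})$; since by (iii) $\phi(E)$ is a linear $\mathbb{P}^n_k$ and the functional cutting out $H$ restricts on $E$ to the nonzero $\pi$-direction form, whose zero locus is exactly $E\cap\widetilde{\mathbb{P}^n_k}$, the hyperplane $H$ meets $\phi(E)$ only along $\phi(E\cap\widetilde{\mathbb{P}^n_k})\subset\phi(\widetilde{\mathbb{P}^n_k})$. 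Hence $\phi(\tilde{\mathbb{P}})\cap H=\phi(\widetilde{\mathbb{P}^n_k})\cong\mathrm{Bl}_P\mathbb{P}^n_k$.

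I expect the main obstacle to be this last scheme-theoretic identification in (ii): controlling the intersection with $H$ so that the exceptional component $E$ contributes nothing beyond the proper subvariety $E\cap\widetilde{\mathbb{P}^n_k}$ and the intersection remains reduced. This forces one to pin down the linear functional defining $H$ precisely, that is, to trace the line $k(P)\subset\Gamma(I_P(2))\otimes_Vk$ arising as the $\mathrm{Tor}$-class in \ref{exseq:globalideal1} and verify that it meets $E$ transversally in the $\pi$-direction; everything else reduces to the very ampleness established in (i) and to standard facts about blow-ups of regularly embedded points.
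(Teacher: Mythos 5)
Your overall architecture is reasonable and parts of it are sound: the line bundle $\tilde L=\nu^*\mathcal{O}_{\mathbb{P}^n_T}(2)\otimes\mathcal{O}_{\tilde{\mathbb{P}}}(-E)=I_P(2)\cdot\mathcal{O}_{\tilde{\mathbb{P}}}$ is the same one the paper uses, the reduction ``proper monomorphism $\Rightarrow$ closed immersion'' is fine, and your derivation of (iii) from (i) via $\tilde L|_E\cong\mathcal{O}_E(1)$ is correct. But the load-bearing step of (i) has a genuine gap. EGA II, 4.4.10 only asserts that $\mathcal{O}_{\tilde{\mathbb{P}}}(-E)\otimes\nu^*\mathcal{O}(m)$ is very ample for $g\circ f$ for all $m\geq m_0$ with $m_0$ unspecified; it does not give you the twist $m=2$. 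The twist genuinely matters: already over a field, $\nu^*\mathcal{O}(1)\otimes\mathcal{O}(-E)$ on the blow-up of $\mathbb{P}^2$ at a point has self-intersection $0$ and only a pencil of sections, so it is the class of the ruling of $\mathbb{F}_1$ and is not very ample. Hence the assertion that $m=2$ suffices --- and suffices in the mixed-characteristic situation, where $P$ sits in the special fibre, $I_P$ contains $\pi$, the basis of $\Gamma(I_P(2))$ contains the extra element $\pi x_0^2$, and the blow-up acquires the extra chart $A_0=V[s_1^{-1},\dots,s_n^{-1}]$ absent over a field --- is precisely the content of the proposition, not something you can outsource to the general composition theorem. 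This is why the paper proceeds by exhibiting the basis $\{x_ix_j\ (i\neq j),\ x_1^2,\dots,x_n^2,\ \pi x_0^2\}$ of $\Gamma(I_P(2))$ and checking chart by chart that $\phi^{\#}:\Gamma(U_i,\mathcal{O}_{U_i})\to A_i$ is surjective for $i=0,1,\dots,n$, then invoking properness and integrality of $\tilde{\mathbb{P}}$ via \cite[I, 8.2.8]{ega}.

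The second gap is the one you flag yourself: in (ii) your argument only identifies $\phi(\tilde{\mathbb{P}})\cap\mathbb{P}(\Gamma(I_P(2)\cdot\mathcal{O}_{\mathbb{P}^n_k}))$ with $\phi(\widetilde{\mathbb{P}^n_k})$ set-theoretically (the support lies in $\phi(E)\cup\phi(\widetilde{\mathbb{P}^n_k})$ and the linear form corresponding to $\pi x_0^2$ cuts $\phi(E)$ along $E\cap\widetilde{\mathbb{P}^n_k}$), whereas the statement requires the scheme-theoretic intersection. Since the special fibre $\tilde{\mathbb{P}}_k=E+\widetilde{\mathbb{P}^n_k}$ is not irreducible and the two divisors $V(\pi)$ and $V(\pi x_0^2)$ share a component, reducedness of the intersection is not automatic and must be checked locally; the paper gets it for free from the same charts, where the intersection is $\Spec\bigl(A_i\otimes_Vk/(s_i)\bigr)=\Spec\bigl(k[t_i,u_{ji}]\bigr)$ for $i\neq 0$ and empty on the chart $A_0$ (where $\pi x_0^2$ generates $\mathcal{L}$). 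Both gaps are repairable, but only by doing essentially the explicit local computation that constitutes the paper's proof, so the appeal to general very-ampleness machinery does not actually save any work here.
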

\begin{proof}
We first define the morphism $\phi$. Since $\Gamma(I_P(2))$ generates $I_P(2)$ and $I_P\cdot\mathcal{O}_{\tilde{\mathbb{P}}}$ is invertible, $\mathcal{L}:=I_P(2)\cdot\mathcal{O}_{\tilde{\mathbb{P}}}$ is a rank 1 locally free quotient of $\Gamma(I_P(2))\otimes_V\mathcal{O}_{\tilde{\mathbb{P}}}$, i.e. a morphism $\tilde{\mathbb{P}}\to\mathbb{P}(\Gamma(I_P(2)))$. It clearly extends the map of \ref{diagram}.

Let $x_0,...,x_n\in\Gamma(\mathcal{O}_{\mathbb{P}^n_T}(1))$ be homogeneous coordinates such that $P$ is the point $(1:0:\cdots:0)\in\mathbb{P}^n(k)$. Then the $x_ix_j$ ($0\leq i,j\leq n$) form a basis of $\Gamma(\mathcal{O}_{\mathbb{P}^n_T}(2))$, so one checks easily that $x_ix_j$ for $i\neq j$ together with $x_1^2,...,x_{n}^2$ and $\pi x_0^2$ form a basis of $\Gamma(I_P(2))$ (cf. \ref{exseq:globalideal}).

Now we investigate the map $\phi$ in a neighbourhood of $P$. Let $W\subset\mathbb{P}^n_T$ be the complement of vanishing set of $x_0\in\Gamma(\mathcal{O}_{\mathbb{P}^n_T}(1))$. Then $t_i:=\frac{x_i}{x_0}$ ($1\leq i\leq n$) are coordinates on $W$, and over $W$ the ideal $I_P$ is generated by the $t_i$ together with $\pi$. So $\tilde{\mathbb{P}}\times_{\mathbb{P}^n_T}W$ is covered by the spectra of the rings
\[ A_i:=\dfrac{V[t_1,...,t_n,u_{1i},...,u_{ni},s_i]}{(t_iu_{1i}-t_1,...,t_iu_{ni}-t_n,s_it_i-\pi)}=\dfrac{V[t_i,u_{ji},s_i]_{1\leq j\leq n, j\neq i}}{(s_it_i-\pi)} \]
for $1\leq i\leq n$, together with
\[ A_0:=\dfrac{V[t_1,...,t_n,s_1^{-1},...,s_n^{-1}]}{(\pi s_1^{-1}-t_1,...,\pi s_n^{-1}-t_n)}=V[s_j^{-1}]_{1\leq j\leq n} \]
(here $s_i^{-1}$ is an indeterminate, not invertible). On the other hand, consider the principal open $U_i$ of $\mathbb{P}(\Gamma(I_P(2)))$ where $x_ix_0\neq 0$ ($i\neq 0$). Let $X_i\subset\tilde{\mathbb{P}}$ be the maximal open subset where $x_ix_0$ generates $\mathcal{L}$, so that $\phi$ restricts to a morphism $X_i\to U_i$. We claim that $\Spec(A_i)\subset X_i$. Indeed, we have the trivialization
\[ \mathcal{O}_{\mathbb{P}^n_T}|_{W}\cong\mathcal{O}_{\mathbb{P}^n_T}(2)|_{W}:f\mapsto x_0^2f \]
and the ideal $I_P\cdot\mathcal{O}_{\Spec(A_i)}$ is generated by $t_i$, so $\mathcal{L}|_{\Spec(A_i)}=I_P(2)\cdot\mathcal{O}_{\Spec(A_i)}$ is generated by $t_ix_0^2$. This is precisely the image of $x_ix_0$, and this implies the claim. Now $U_i$ has coordinates $\frac{x_jx_k}{x_ix_0}$ and $\frac{\pi x_0^2}{x_ix_0}$. We have natural identifications $\phi^{\#}(\frac{x_ix_j}{x_ix_0})=t_j$, $\phi^{\#}(\frac{x_jx_0}{x_ix_0})=u_{ji}$, $\phi^{\#}(\frac{x_jx_k}{x_ix_0})=u_{ji}t_k$ (one checks easily that $u_{ji}t_k=u_{ki}t_j$ in $A_i$) and $\phi^{\#}(\frac{\pi x_0^2}{x_ix_0})=s_i$, so the map $\phi|_{U_i}:X_i\to U_i$ induces a surjective homomorphism
\[ \phi^{\#}:\Gamma(U_i,\mathcal{O}_{U_i})\twoheadrightarrow A_i. \]
Similarly, if $U_0$ is the principal open of $\mathbb{P}(\Gamma(I_P(2)))$ where $\pi x_0^2\neq 0$, and $X_0\subset\tilde{\mathbb{P}}$ the maximal open where $\pi x_0^2$ generates $\mathcal{L}$, then (using that $I_P\cdot\mathcal{O}_{\Spec(A_0)}$ is generated by $\pi$) one checks that $\Spec(A_0)\subset X_0\to U_0$. We have $\phi^{\#}(\frac{x_jx_0}{\pi x_0^2})=s_j^{-1}$ for $j\neq 0$ and $\phi^{\#}(\frac{x_jx_k}{\pi x_0^2})=t_js_k^{-1}$ for $j,k\neq 0$, so we get a surjective map
\[ \phi^{\#}:\Gamma(U_0,\mathcal{O}_{U_0})\twoheadrightarrow A_0. \]
Since it is easy to see that the principal open where $x_i^2\neq 0$ ($i\neq 0$) contains the principal open of $\tilde{\mathbb{P}}$ where $x_i\neq 0$ as a closed subscheme, this at least shows that $\phi$ is a local immersion. But $\tilde{\mathbb{P}}$ is integral and $\phi$ is proper, so by \cite[I, 8.2.8]{ega} $\phi$ is a closed immersion. This proves (i). For (ii) it suffices to note that $\mathbb{P}(\Gamma(I_P(2)\cdot\mathcal{O}_{\mathbb{P}^n_k}))$ is the zero set of $\pi x_0^2$ in $\mathbb{P}(\Gamma(I_P(2)))\otimes_Vk$ (cf. \ref{exseq:globalideal1}) and that the rings $A_i\otimes_Vk/(s_i)$ for $i\neq 0$ give an open covering of the blow up of $\mathbb{P}^n_k$ at $P$ over $W$. Finally, note that the ideal of the exceptional divisor in $A_0$ is generated by $\pi$, and the kernel of the map
\[ \Gamma(U_0,\mathcal{O}_{U_0})\otimes_Vk=k\left[\tfrac{x_jx_k}{\pi x_0^2}\right]_{j,k}\twoheadrightarrow k[s_j^{-1}]_{j}=A_0\otimes_Vk \]
is generated by the coordinates $\frac{x_jx_k}{\pi x_0^2}$ for $j,k\neq 0$. This implies that the exceptional divisor shares a dense open subset with the linear variety cut out by these coordinates, so since the exceptional divisor is integral it must be equal to this linear variety, i.e. (iii).
\end{proof}

\begin{corollary}\label{closedimmersion}
Let $X\hookrightarrow\mathbb{P}^n_T$ be an immersion and let $\tilde{X}$ be the blow up of $X$ at a closed point $P\in X_k$. There is a canonical immersion $\phi:\tilde{X}\to\mathbb{P}(\Gamma(I_P(2)))$. Moreover, if $X$ is regular at $P$, then the image of the exceptional divisor is a linear subvariety of $\mathbb{P}(\Gamma(I_P(2)))\otimes_Vk$.
\end{corollary}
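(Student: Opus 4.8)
The plan is to realize $\tilde X$ as the strict transform of $X$ inside $\tilde{\mathbb P}$ and then compose with the closed immersion $\phi$ of \ref{key} (i). First I reduce to the case where $X$ is closed: an immersion $X\hookrightarrow\mathbb P^n_T$ factors as a closed immersion $X\hookrightarrow W$ into an open subscheme $W\subset\mathbb P^n_T$ containing $P$. Since forming a blow up commutes with restriction to an open subset, $\tilde{\mathbb P}\times_{\mathbb P^n_T}W$ is the blow up of $W$ at $P$ and is an open subscheme of $\tilde{\mathbb P}$, so it suffices to produce a closed immersion of $\tilde X$ into this blow up. For $X$ closed, the standard description of strict transforms identifies the blow up of $X$ along $I_P\cdot\mathcal O_X$, which is $\tilde X$ by definition, with the strict transform of $X$ in $\tilde{\mathbb P}$. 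This gives a closed immersion $\tilde X\hookrightarrow\tilde{\mathbb P}$ (resp. into the open blow up of $W$), and composing with $\phi$ yields the desired immersion $\phi:\tilde X\to\mathbb P(\Gamma(I_P(2)))$.

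For the second assertion, write $E_X\subset\tilde X$ for the exceptional divisor. Since $E_X$ lies over $P$, its image in $\tilde{\mathbb P}$ is contained in the exceptional divisor $E$, and in fact $E_X=\tilde X\cap E$ as the pullback of the Cartier divisor $E$. Both $\mathbb P^n_T$ and $X$ are regular at $P$, so the associated graded rings of the respective maximal ideals are polynomial algebras; hence $E=\Proj\Sym_k(\mathfrak m_{\mathbb P,P}/\mathfrak m_{\mathbb P,P}^2)$ and $E_X=\Proj\Sym_k(\mathfrak m_{X,P}/\mathfrak m_{X,P}^2)$. The inclusion $E_X\hookrightarrow E$ is induced by the surjection of cotangent spaces $\mathfrak m_{\mathbb P,P}/\mathfrak m_{\mathbb P,P}^2\twoheadrightarrow\mathfrak m_{X,P}/\mathfrak m_{X,P}^2$; passing to symmetric algebras, this surjection exhibits $E_X$ as the linear subvariety of $E$ cut out by the degree-one forms in its kernel. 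By \ref{key} (iii), $\phi$ maps $E$ isomorphically onto a linear subvariety of $\mathbb P(\Gamma(I_P(2)))\otimes_Vk$, so it carries the linear subvariety $E_X\subset E$ onto a linear subvariety of the target. As a linear subvariety of a linear subvariety of a projective space is again linear, $\phi(E_X)$ is linear, as claimed.

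The main obstacle is the compatibility in the second paragraph: one must check that the intrinsic linear structure on $E_X\hookrightarrow E$ coming from the cotangent surjection agrees with the embedding $E_X=\tilde X\cap E\hookrightarrow E$ induced by the strict transform, and that $\tilde X$ really meets $E$ scheme-theoretically in the projectivized tangent space when $X$ is regular at $P$ (so that the tangent cone is a linear space). This is precisely where regularity enters in an essential way, ruling out the singular case in which the exceptional divisor of $\tilde X$ would be the projectivized tangent cone rather than a linear subspace; away from this point the argument is the formal manipulation of blow ups together with \ref{key}.
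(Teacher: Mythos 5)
Your proposal is correct and follows essentially the same route as the paper: the immersion is obtained by realizing $\tilde{X}$ as the strict transform inside $\tilde{\mathbb{P}}$ and composing with the closed immersion of \ref{key}~(i), and the linearity of the exceptional divisor comes from regularity at $P$ (the paper phrases this via a regular subsequence of a regular system of parameters, you via the surjection of cotangent spaces and symmetric algebras --- the same content) combined with \ref{key}~(iii). Your write-up merely spells out in more detail the compatibility that the paper leaves implicit.
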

\begin{proof}
Everything except the final statement is an immediate consequence of the proposition. Now if $X$ is regular at $P$ the ideal of $P$ in $X$ can be generated by a regular subsequence of a regular sequence generating the ideal of $P$ in $\mathbb{P}^n_T$. So the exceptional divisor of $\tilde{X}$ is a linear subvariety of the exceptional divisor of $\tilde{\mathbb{P}}$, and hence is linear by the proposition.
\end{proof}

This description of $\tilde{X}$ gives us good control over hyperplane sections of its special fibre in $\mathbb{P}(\Gamma(I_P(2)))$. The next lemma allows one to compare hyperplanes in $\mathbb{P}(\Gamma(\mathcal{O}_{\mathbb{P}^n_T}(2)))$ with hyperplanes in $\mathbb{P}(\Gamma(I_P(2)))$, especially for those hyperplanes which pass through the point $P$.

Recall (\S\ref{setupgrass}) that for a projective space $\mathbb{P}^n_S$ over a scheme $S$ we write $G_{\mathbb{P}^n_S,r}=\Grass_{n+1-r}(\Gamma(\mathbb{P}^n_S,\mathcal{O}_{\mathbb{P}^n_S}(1)))$.

\begin{lemma}\label{zariskiopen}
Let $G:=G_{\mathbb{P}(\Gamma(\mathcal{O}_{\mathbb{P}^n_T}(2))),r}$ and $G':=G_{\mathbb{P}(\Gamma(I_P(2))),r}$.
\begin{enumerate}[(i)]
\item There is a canonical isomorphism $\mathcal{H}_P=G_{\mathbb{P}(\Gamma(I_P(2)\cdot\mathcal{O}_{\mathbb{P}^n_k})),r}$.
\item There is a canonical isomorphism $\iota:G_K\cong G'_K$.
\item There is a dense open $G'_0\subset G'_k$ and a canonical morphism $\rho:G'_0\to \mathcal{H}_{P}$. This morphism admits a section, in particular it is surjective.
\item For any closed $L\in G'_K$ such that $\spe_{G'}(L)\in G'_0$ we have $\spe_{G}(\iota^{-1}(L))=\rho(\spe_{G'}(L))$.
\item $\iota\left(\spe^{-1}_G(\mathcal{H}_P)\cap G(K)\right)=\spe_{G'}^{-1}(G'_0)\cap G'(K)$.
\end{enumerate}
So we have a commutative diagram
\[ \xymatrix{
G'_K \ar[d]^{\spe_{G'}} & \spe^{-1}_{G'}(G'_0) \ar[d]^{\spe_{G'}} \ar@{_(->}[l] \ar[r]^{\iota^{-1}} & \spe^{-1}_G(\mathcal{H}_P) \ar@{^(->}[r] \ar[d]^{\spe_G} & G_K \ar[d]^{\spe_G} \\
G'_k & G'_0\ar@{_(->}[l]  \ar[r]^{\rho} & \mathcal{H}_P\ar@{^(->}[r] & G_k
} \]
\end{lemma}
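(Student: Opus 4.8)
The plan is to work throughout with the two free $V$-modules $W:=\Gamma(\mathcal{O}_{\mathbb{P}^n_T}(2))$ and $W':=\Gamma(I_P(2))$, and the inclusion $W'\hookrightarrow W$ of \ref{exseq:globalideal} (for $d=2$), whose cokernel is the $\pi$-torsion module $k(P)$. I would fix the homogeneous coordinates $x_0,\dots,x_n$ of \ref{key} with $P=(1:0:\cdots:0)$, so that the monomials $x_ix_j$ form a basis of $W$ while the $x_ix_j$ with $(i,j)\neq(0,0)$ together with $\pi x_0^2$ form a basis of $W'$. The distinguished datum driving everything is the line $\ell:=\langle\overline{\pi x_0^2}\rangle\subset W'\otimes_Vk$, which by \ref{exseq:globalideal1} is exactly the image of $k(P)$, with quotient $W'\otimes_Vk/\ell=\Gamma(I_P(2)\cdot\mathcal{O}_{\mathbb{P}^n_k})$; note also that $\ell$ is precisely the kernel of the reduction $W'\otimes_Vk\to W\otimes_Vk$, since $\pi x_0^2$ dies modulo $\pi$.

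For (i) I would recall from \S\ref{setupgrass} that a $k$-point of $G$ is an $r$-dimensional subspace $S\subset W\otimes_Vk=\Gamma(\mathcal{O}_{\mathbb{P}^n_k}(2))$ (the kernel of the quotient), and that under the degree-two embedding $P$ corresponds to the evaluation $\mathrm{ev}_P:W\otimes_Vk\to k(P)$; the linear variety $S$ passes through $P$ iff $S\subseteq\ker(\mathrm{ev}_P)=\Gamma(I_P(2)\cdot\mathcal{O}_{\mathbb{P}^n_k})$ by \ref{exseq:globalideal2}, so $\mathcal{H}_P$ is the Grassmannian of $r$-dimensional subspaces of $\Gamma(I_P(2)\cdot\mathcal{O}_{\mathbb{P}^n_k})$, which is (i). For (ii), tensoring $W'\hookrightarrow W$ with $K$ kills the torsion cokernel, giving $W'_K\xrightarrow{\sim}W_K$ and hence $\iota:G_K\xrightarrow{\sim}G'_K$. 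For (iii) I would let $G'_0\subset G'_k$ be the open locus of subspaces $S'$ with $S'\cap\ell=0$; on $G'_0$ the composite $S'\hookrightarrow W'\otimes_Vk\twoheadrightarrow\Gamma(I_P(2)\cdot\mathcal{O}_{\mathbb{P}^n_k})$ is injective and defines $\rho$, and a $k$-linear splitting of \ref{exseq:globalideal1} embeds $\mathcal{H}_P$ back into $G'_0$ as a section, so $\rho$ is surjective.

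The heart of the argument is (iv), and here everything comes down to a lattice computation. Given a closed point $L\in G'_K$, properness of $G'$ lets me spread it out, over the valuation ring $V'$ of its residue field, to a rank-$r$ saturated submodule $N\subset W'\otimes_VV'$ with $\spe_{G'}(L)=N\otimes_{V'}k$. Viewing $N\subset W\otimes_VV'$, the point $\iota^{-1}(L)$ extends to the saturation $M$ of $N$ in $W\otimes_VV'$, and $\spe_G(\iota^{-1}(L))=M\otimes_{V'}k$. I claim the hypothesis $\spe_{G'}(L)\in G'_0$, i.e. $\overline{\pi x_0^2}\notin N\otimes_{V'}k$, forces $M=N$: if some $w\in W\otimes_VV'\setminus N$ had $\varpi w\in N$ (with $\varpi$ a uniformizer of $V'$ and $\pi=\varpi^eu$), then tracking the $x_0^2$-coordinate shows its valuation must be exactly $e-1$, whence the reduction of $\varpi w$ is a nonzero multiple of $\overline{\pi x_0^2}$ lying in $N\otimes_{V'}k$, contradicting $\overline{\pi x_0^2}\notin N\otimes_{V'}k$. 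With $M=N$, the reduction $N\otimes_{V'}k\hookrightarrow W\otimes_Vk$ factors through $W'\otimes_Vk\to W\otimes_Vk$ and is injective precisely because $N\otimes_{V'}k\cap\ell=0$, with image $\rho(\spe_{G'}(L))$; this is (iv). I expect this saturation bookkeeping, distinguishing when the inclusion $W'\subset W$ does or does not alter saturations, to be the main obstacle.

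Finally, for (v) the inclusion $\supseteq$ is immediate from (iv): if $\spe_{G'}(L)\in G'_0$ then $\spe_G(\iota^{-1}L)=\rho(\spe_{G'}L)\in\mathcal{H}_P$. For $\subseteq$ I would run the saturation argument in the opposite direction: starting from $L'\in G(K)$ with $\spe_G(L')\in\mathcal{H}_P$, so its reduction avoids the $x_0^2$-direction, I extend to $M\subset W$, set $N=M\cap W'$, and show $\overline{\pi x_0^2}\notin N\otimes_Vk$; were it present, a representative would be $\pi$ times a vector of $M$ whose reduction has nonzero $x_0^2$-coordinate, contradicting $\spe_G(L')\in\mathcal{H}_P$. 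It is essential here that $L'$ be $K$-rational, so that $V'=V$ and $e=1$: a ramified closed point can satisfy $\spe_G(L')\in\mathcal{H}_P$ while $\spe_{G'}(\iota L')=\ell\notin G'_0$, which is exactly why the statement intersects with $G(K)$. The asserted diagram then merely assembles the commutativity (iv) together with the matching of the two specialization loci (v) at the level of $K$-points.
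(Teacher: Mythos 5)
Your proposal is correct and follows essentially the same route as the paper: (i)--(iii) via the exact sequences \ref{exseq:globalideal1}--\ref{exseq:globalideal2} and the distinguished image of $k(P)$ in $\Gamma(I_P(2))\otimes_Vk$, and (iv)--(v) by comparing the two lattices $S\cap(\Gamma(\mathcal{O}_{\mathbb{P}^n_T}(2))\otimes_VV_L)$ and $S\cap(\Gamma(I_P(2))\otimes_VV_L)$, whose discrepancy is a submodule of $k(P)\otimes_VV_L$ that vanishes exactly when the specialization lies in $G'_0$ (and, for $K$-rational $L$, exactly when it is not everything --- which is your correct observation on why (v) needs $K$-points). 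The only differences are presentational: you argue with kernels, an explicit basis and valuations of the $x_0^2$-coordinate where the paper runs the equivalent diagram chase on quotients, and in (i) and (iii) the functorial/scheme-theoretic statements are more cleanly phrased via locally free quotients as in \S\ref{setupgrass}, but nothing of substance is missing.
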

\begin{proof}
(i) :  Consider the sequence \ref{exseq:globalideal2}. Any quotient of $\Gamma(I_P(2)\cdot\mathcal{O}_{\mathbb{P}^n_k})$ yields a quotient of $\Gamma(\mathcal{O}_{\mathbb{P}^n_k}(2))$ which has $\Gamma(\mathcal{O}_{\mathbb{P}^n_k}(2))/\Gamma(I_P(2)\cdot\mathcal{O}_{\mathbb{P}^n_k})\cong k$ as quotient, and conversely. This is exactly statement (i).

(ii) : This follows from the isomorphism $\Gamma(I_P(2))\otimes_VK=\Gamma(\mathcal{O}_{\mathbb{P}^n_T}(2))\otimes_VK$.

(iii) : Let $g\in \Gamma(I_P(2))\otimes_Vk$ be a generator of the image of the map $k(P)\to\Gamma(I_P(2))\otimes_Vk$ of sequence \ref{exseq:globalideal1}, and let $\mathcal{Q}'$ be the tautological quotient on $G'_k$. Consider the commutative diagram with exact rows
\[ \xymatrix{
0 \ar[r] & \mathcal{O}_{G'_k}g \ar[r] \ar[d] & \Gamma(I_P(2))\otimes_V\mathcal{O}_{G'_k} \ar[d] \\
0 \ar[r] & \mathcal{L} \ar[r] & \mathcal{Q}'
} \]
where $\mathcal{L}\subset \mathcal{Q}'$ is the image of $\mathcal{O}_{G'_k}g$. Note that $\mathcal{L}\neq 0$ since the image of $g$ is non-zero in a generic quotient of $\Gamma(I_P(2))\otimes_Vk$. Since $G'_k$ is integral and $\mathcal{L}$ is a torsion-free quotient of $\mathcal{O}_{G'_k}g$, it follows easily that $\mathcal{L}=\mathcal{O}_{G'_k}g$. Thus, we get a commutative diagram with exact rows
\[ \xymatrix{
0 \ar[r] & \mathcal{O}_{G'_k}g \ar[r] \ar@{=}[d] & \Gamma(I_P(2))\otimes_V\mathcal{O}_{G'_k} \ar[r] \ar[d] & \Gamma(I_P(2)\cdot\mathcal{O}_{\mathbb{P}^n_k})\otimes_k\mathcal{O}_{G'_k} \ar[r] \ar[d] & 0 \\
0 \ar[r] & \mathcal{O}_{G'_k}g \ar[r] & \mathcal{Q}' \ar[r] & \mathcal{Q}'' \ar[r] & 0
} \]
and there is a dense open $G'_0\subset G'_k$ over which the quotient $\mathcal{Q}''$ is locally free and therefore defines a map $\rho:G'_0\to G_{\mathbb{P}(\Gamma(I_P(2)\cdot\mathcal{O}_{\mathbb{P}^n_k})),r}=\mathcal{H}_P$. Any choice of splitting of the exact sequence \ref{exseq:globalideal1} provides a section of this map, so this map is surjective and this proves (iii).

(iv) : Let $K_L$ denote the residue field of $L$ and $V_L$ the normalization of $V$ in $K_L$. Suppose $L$ is given by a subspace $S\subset\Gamma(I_P(2))\otimes_VK_L$. We claim that the condition $\spe_{G'}(L)\in G'_0$ is equivalent to $S\cap(\Gamma(\mathcal{O}_{\mathbb{P}^n_T}(2))\otimes_VV_L)=S\cap(\Gamma(I_P(2))\otimes_VV_L)$. To see this, setting $Q:=\Gamma(\mathcal{O}_{\mathbb{P}^n_T}(2))\otimes_VV_L/(S\cap \Gamma(\mathcal{O}_{\mathbb{P}^n_T}(2))\otimes_VV_L)$ and $Q':=\Gamma(I_P(2))\otimes_VV_L/(S\cap\Gamma(I_P(2))\otimes_VV_L)$ we have a commutative diagram with exact rows
\[ \xymatrix{
0 \ar[r] & \Gamma(I_P(2))\otimes_VV_L \ar[r] \ar[d] & \Gamma(\mathcal{O}_{\mathbb{P}^n_T}(2))_VV_L \ar[r] \ar[d] & k(P)\otimes_VV_L \ar[d] \ar[r] & 0 \\
0 \ar[r] & Q' \ar[r] & Q \ar[r] & k(P)\otimes_VV_L/M \ar[r] & 0
} \]
where $M=S\cap(\Gamma(\mathcal{O}_{\mathbb{P}^n_T}(2))\otimes_VV_L)/S\cap(\Gamma(I_P(2))\otimes_VV_L)$. Taking tensor product $\otimes_Vk$ we get a commutative diagram with exact rows
\[ \xymatrix{
0 \ar[r] & k(P)\otimes_VV_L \ar[r] \ar[d] & \Gamma(I_P(2))\otimes_Vk\otimes_VV_L  \ar[d] \\
0 \ar[r] & k(P)\otimes_VV_L/M \ar[r] & Q'\otimes_Vk
} \]
In particular, the map $k(P)\otimes_VV_L\to Q\otimes_Vk$ is injective if and only if $M=0$. By definition of $G'_0$ (see the proof of (iii)), this map is injective if and only if the point $l\in G'(k\otimes_VV_L)$ determined by $Q'\otimes_Vk$ lies $G'_0$. Since $\spe_{G'}(L)$ is equal to the support of $l$, we see that $M=0$ if and only if $\spe_{G'}(L)\in G'_0$ as claimed. In particular, since we clearly have $\spe_{G}(\iota^{-1}(L))\in\mathcal{H}_P$ if and only if $M\neq k(P)\otimes_VV_L$, $M=0$ implies $\spe_{G}(\iota^{-1}(L))\in\mathcal{H}_P$ i.e. (iv).

(v) : With notation as in the proof of (iv), note that in the case $K_L=K$ we have $M=0$ if and only if $M\neq k(P)$, hence $\spe_{G'}(L)\in G_0'$ if and only if $\spe_{G}(\iota^{-1}(L))\in\mathcal{H}_P$ as required.
\end{proof}

\begin{lemma}\label{exceptionaldivisorintersection}
With notation as in \ref{closedimmersion} and \ref{zariskiopen}. Assume further that $X$ is regular of dimension $d+1$ at $P$. If $d\geq r$, then there is a dense open $U\subset\mathcal{H}_P$ such that for each $L\in G(T)$ with $\spe_G(L)\in U$, $X\cap L$ is regular at $P$ of dimension $d+1-r$.
\end{lemma}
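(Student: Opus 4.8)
The plan is to reduce the regularity of $X\cap L$ at $P$ to the linear independence of the leading terms at $P$ of the quadrics cutting out $L$, and then to recognise that independence as a dense open incidence condition on the Grassmannian $\mathcal{H}_P$, valid precisely because $d\geq r$.

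First I would set up the local picture on the special fibre. Since $X$ is flat over $V$ and regular of dimension $d+1$ at $P$, the uniformiser $\pi$ is part of a regular system of parameters of $\mathcal{O}_{X,P}$, so $X_k$ is regular of dimension $d$ at $P$ and the surjection $\mathfrak{m}_P/\mathfrak{m}_P^2\twoheadrightarrow\mathfrak{m}_{P,X_k}/\mathfrak{m}_{P,X_k}^2$ has kernel spanned by the class of $\pi$. A point $L\in G(T)$ corresponds to $r$ quadrics on $\mathbb{P}^n_T$; choosing integral equations $Q_1,\dots,Q_r\in\mathcal{O}_{X,P}$, the hypothesis $\spe_G(L)\in\mathcal{H}_P$ means the specialised configuration passes through $P$, so each $Q_i$ vanishes at $P$ and the reductions $\bar Q_1,\dots,\bar Q_r\in\mathfrak{m}_{P,X_k}/\mathfrak{m}_{P,X_k}^2$ are the leading forms of the specialised quadrics. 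If these leading forms are linearly independent in the $d$-dimensional space $\mathfrak{m}_{P,X_k}/\mathfrak{m}_{P,X_k}^2$, then lifting through the surjection above shows the $Q_i$ are part of a regular system of parameters of $\mathcal{O}_{X,P}$, so that $\mathcal{O}_{X\cap L,P}=\mathcal{O}_{X,P}/(Q_1,\dots,Q_r)$ is regular of dimension $d+1-r$. Thus regularity of $X\cap L$ at $P$ is guaranteed by independence of the $\bar Q_i$ (only this direction is needed).

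Next I would translate this independence into a condition on $\spe_G(L)$ using the projective embedding of the blow-up. By \ref{closedimmersion} (together with \ref{key}), since $X_k$ is regular at $P$ the exceptional divisor $\bar E$ of the blow-up of $X_k$ at $P$ is a linear subvariety of dimension $d-1$ of $\mathbb{P}(\Gamma(I_P(2)\cdot\mathcal{O}_{\mathbb{P}^n_k}))$, while under the identification of \ref{zariskiopen}(i) the point $\spe_G(L)$ is the codimension-$r$ linear subvariety $\Lambda$ of $\mathbb{P}(\Gamma(I_P(2)\cdot\mathcal{O}_{\mathbb{P}^n_k}))$ spanned by the $\bar Q_i$. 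The key compatibility, which I expect to be the main obstacle, is that the embedding $\bar E=\mathbb{P}(\mathfrak{m}_{P,X_k}/\mathfrak{m}_{P,X_k}^2)\hookrightarrow\mathbb{P}(\Gamma(I_P(2)\cdot\mathcal{O}_{\mathbb{P}^n_k}))$ is induced by the map carrying a quadric through $P$ to its leading form at $P$; granting this, the $r$ forms defining $\Lambda$ restrict on $\bar E$ to exactly the leading forms $\bar Q_i$, so the $\bar Q_i$ are independent if and only if $\Lambda$ meets $\bar E$ in the expected dimension $d-1-r$. Establishing this amounts to tracing the construction of the embedding of \ref{closedimmersion}/\ref{key} and of the specialisation map through the identification of \ref{zariskiopen}(i), and checking that restriction to the exceptional divisor recovers leading terms.

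Finally I would define $U\subset\mathcal{H}_P$ to be the locus of those $\Lambda$ whose $r$ defining forms restrict to linearly independent forms on $\bar E$, equivalently meet $\bar E$ in the expected dimension $d-1-r$. Restricting $r$ forms to the $d$-dimensional space underlying $\bar E$, generic independence holds exactly when $r\leq d$, so $U$ is open; it is non-empty, hence dense since $\mathcal{H}_P$ is irreducible, precisely because $d\geq r$, the complement being the proper closed Schubert-type locus where the intersection dimension jumps. By the first two paragraphs, for every $L\in G(T)$ with $\spe_G(L)\in U$ the scheme $X\cap L$ is regular at $P$ of dimension $d+1-r$, as required.
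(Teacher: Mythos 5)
Your overall strategy is the right one and is essentially the paper's: reduce regularity of $X\cap L$ at $P$ to linear independence of the leading forms of the $r$ quadrics in the cotangent space, and obtain that independence generically from a Schubert-type open condition on $\mathcal{H}_P$ (this is exactly what \ref{subspaceclosed} is for). But your first reduction rests on a false claim, and it fails precisely in the case the lemma is designed for. Regularity of $X$ at $P$ (even together with flatness over $V$) does \emph{not} imply that $\pi$ is part of a regular system of parameters of $\mathcal{O}_{X,P}$: for $X=\Spec V[x,y]/(xy-\pi)$ at the origin, $X$ is regular of dimension $2$ and flat, yet $\pi=xy\in\mathfrak{m}_P^2$ and $X_k=\Spec k[x,y]/(xy)$ is singular at $P$. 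In that situation $\mathfrak{m}_{P,X_k}/\mathfrak{m}_{P,X_k}^2=\mathfrak{m}_P/\mathfrak{m}_P^2$ has dimension $d+1$, not $d$, and the exceptional divisor of the blow-up of $X_k$ at $P$ is $\Proj$ of the associated graded ring of a non-regular local ring, so it is in general neither a linear subvariety nor of dimension $d-1$. Since the whole point of this lemma (as opposed to \ref{smoothcase}, which treats the smooth case by exactly your argument) is to handle regular $X$ with singular special fibre, the second and third paragraphs of your proposal collapse: there is no linear $\bar E\subset\mathbb{P}(\Gamma(I_P(2)\cdot\mathcal{O}_{\mathbb{P}^n_k}))$ to intersect $\Lambda$ with.

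The paper's proof circumvents this by never passing to the blow-up of $X_k$: it uses the exceptional divisor $E$ of the blow-up of $X$ over $V$, which by \ref{closedimmersion} \emph{is} a linear $\mathbb{P}^d_k\cong\mathbb{P}(\mathfrak{m}_P/\mathfrak{m}_P^2)$, but inside the larger space $\mathbb{P}(\Gamma(I_P(2)))\otimes_Vk$, not inside the hyperplane $\mathbb{P}(\Gamma(I_P(2)\cdot\mathcal{O}_{\mathbb{P}^n_k}))$ where $\mathcal{H}_P$ lives (indeed, by the proof of \ref{key}, $E$ lies in the chart where $\pi x_0^2\neq 0$). The surjection $\sigma\otimes k:\Gamma(I_P(2))\otimes_Vk\to\mathfrak{m}_P/\mathfrak{m}_P^2$ does not factor through $\Gamma(I_P(2)\cdot\mathcal{O}_{\mathbb{P}^n_k})$, so the paper must take the image $S$ of $S'=\ker(\sigma\otimes k)$ in $\Gamma(I_P(2)\cdot\mathcal{O}_{\mathbb{P}^n_k})$, apply \ref{subspaceclosed} to $S$ on $\mathcal{H}_P$ (the inequality $\dim S\leq\dim\Gamma(I_P(2)\cdot\mathcal{O}_{\mathbb{P}^n_k})-r$ is where $d\geq r$ enters), and then run a diagram chase through the comparison of the two Grassmannians set up in \ref{zariskiopen} to conclude that $S'$ meets the $r$-plane defining $L$ trivially. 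If you want to salvage your argument with minimal changes, replace $\mathfrak{m}_{P,X_k}/\mathfrak{m}_{P,X_k}^2$ by the quotient $\mathfrak{m}_P/(\mathfrak{m}_P^2+\pi\mathcal{O}_{X,P})$: independence of the images of the $Q_i$ there is still sufficient for regularity of $X\cap L$ at $P$, the leading-form map does descend to a surjection $\Gamma(I_P(2)\cdot\mathcal{O}_{\mathbb{P}^n_k})\to\mathfrak{m}_P/(\mathfrak{m}_P^2+\pi)$ (because the generator $\pi x_0^2$ of the kernel of $\Gamma(I_P(2))\otimes_Vk\to\Gamma(I_P(2)\cdot\mathcal{O}_{\mathbb{P}^n_k})$ maps to the class of $\pi$), and its kernel has small enough dimension for \ref{subspaceclosed} to apply whether or not $\pi\in\mathfrak{m}_P^2$.
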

\begin{proof}
Let $\mathfrak{m}\subset\mathcal{O}_{X}$ be the ideal sheaf of $P$. The closed immersion $E\hookrightarrow\mathbb{P}(\Gamma(I_P(2)))$ of \ref{closedimmersion} is by definition that arising from the line bundle quotient
\[ \Gamma(I_P(2))\otimes_V\mathcal{O}_{E}\to(\mathfrak{m}(2)\cdot\mathcal{O}_{\tilde{X}})\otimes_{\mathcal{O}_{\tilde{X}}}\mathcal{O}_E. \]
Since $E$ is a linear subvariety of $\mathbb{P}(\Gamma(I_P(2)))\otimes_Vk$ under this embedding, it follows that the induced map
\[ \sigma:\Gamma(I_P(2))\to\Gamma(E,\mathfrak{m}(2)\cdot\mathcal{O}_{\tilde{X}}) \]
is surjective and the closed immersion $E\hookrightarrow\mathbb{P}(\Gamma(I_P(2)))$ is equal to the morphism $\Proj(\Sym_V\sigma)$.

Let $S'=\ker(\sigma\otimes_Vk)$ and $S:=\im(S\subset\Gamma(I_P(2))\otimes_Vk\to\Gamma(I_P(2)\cdot\mathcal{O}_{\mathbb{P}^n_k}))$. Let $N=\dim\Gamma(I_P(2))\otimes_Vk$. Note that $\dim S'=N-(d+1)<N-r$, hence $\dim S\leq N-1-r=\dim\Gamma(I_P(2)\cdot\mathcal{O}_{\mathbb{P}^n_k})-r$. So if $\mathcal{Q}''$ denotes the tautological quotient on $\mathcal{H}_P$ and $\mathcal{Q}''':=\im(S\otimes_k\mathcal{O}_{\mathcal{H}_P}\to\mathcal{Q}'')$, then by \ref{subspaceclosed} there is a dense open $U\subset \mathcal{H}_P$ such that $S\otimes_k\mathcal{O}_{U}=\mathcal{Q}'''|_{U}$ and $\mathcal{Q}''/\mathcal{Q}'''|_U$ is flat. Now let $l\in G'_0$ be a closed point such that $\rho(l)\in U$. If $Q'$ denotes the quotient corresponding to $l$ and $Q''$ that corresponding to $\rho(l)$, then as in the proof of \ref{zariskiopen} we have a commutative diagram with exact rows
\[ \xymatrix{
0 \ar[r] & k(P) \ar[r] \ar@{=}[d] & \Gamma(I_P(2))\otimes_Vk \ar[r]^{p} \ar[d]^{q'} & \Gamma(I_P(2)\cdot\mathcal{O}_{\mathbb{P}^n_k}) \ar[r] \ar[d]^{q''} & 0 \\
0 \ar[r] & k(P) \ar[r] & Q' \ar[r] & Q'' \ar[r] & 0
} \]
Since $S\to Q''$ is injective by choice of $U$, the above diagram implies that the map $S'\to Q'$ is injective. Hence $\ker(q')$ injects into $\Gamma(E,\mathfrak{m}(2)\cdot\mathcal{O}_{\tilde{X}})$ under $\sigma\otimes k$. Since $\Gamma(E,\mathfrak{m}(2)\cdot\mathcal{O}_{\tilde{X}})=\mathfrak{m}/\mathfrak{m}^2$, this means that $X\cap L$ is regular at $P$ if $\spe(L)\in U$.
\end{proof}

Now with the picture clarified we can show the following. For a closed point $L\in G_K$ we denote by $K_L$ its residue field and by $V_L$ the normalization of $V$ in $K_L$.

\begin{proposition}\label{smoothcase}
Let $X$ be a smooth quasi-projective $T$-scheme of relative dimension $d\geq r$, $X\hookrightarrow\mathbb{P}^n_T$ an immersion, and let $G:=G_{\mathbb{P}(\Gamma(\mathcal{O}_{\mathbb{P}^n_T}(2))),r}$. Let $P\in X_k$. There is a dense open $U\subset \mathcal{H}_P$ such that for all closed $L\in G_K$ with $\spe_G(L)\in \mathcal{H}_P$ the scheme $X_{V_L}\cap L$ is smooth of relative dimension $d-r$ over $V_L$ at the points of $X_k$.
\end{proposition}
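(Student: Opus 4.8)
The plan is to translate smoothness over $V_L$ into a statement about the special fibre, and then to run Bertini twice: once away from $P$, using the base-point-free linear system of degree-two forms through $P$, and once at $P$, using the blow up. First I would reduce to the special fibre. Since $X$ is smooth over $V$ of relative dimension $d$, the base change $X_{V_L}$ is smooth over $V_L$ of relative dimension $d$, and $X_{V_L}\cap L$ is cut out of it by the $r$ forms defining $L$. The hypothesis $\spe_G(L)\in\mathcal{H}_P$ means exactly that the special fibre $L_k=\spe_G(L)$ of the linear subvariety passes through $P$, and one has $(X_{V_L}\cap L)_k=X_k\cap L_k$. At a closed point $x$ of $X_k\cap L_k$ the ring $\mathcal{O}_{X_{V_L},x}$ is regular of dimension $d+1$; hence if $X_k\cap L_k$ is smooth over $k$ of dimension $d-r$ at $x$, then $\pi$ together with the $r$ defining forms is a regular sequence in $\mathcal{O}_{X_{V_L},x}$, so $\pi$ is a non-zerodivisor on $\mathcal{O}_{X_{V_L}\cap L,x}$. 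This gives flatness over $V_L$, and flatness together with smoothness of the special fibre yields smoothness over $V_L$ of relative dimension $d-r$ (the relative form of the Jacobian criterion, cf. \ref{jacobi}). It therefore suffices to produce a dense open $U\subset\mathcal{H}_P$ such that for $\spe_G(L)\in U$ the scheme $X_k\cap L_k$ is smooth over $k$ of dimension $d-r$ at every point.

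Away from $P$ this is classical Bertini. By \ref{global} the sheaf $I_P(2)\cdot\mathcal{O}_{\mathbb{P}^n_k}$ is globally generated, and by \ref{diagram} and \ref{key} the induced morphism $\mathbb{P}^n_k\setminus\{P\}\to\mathbb{P}(\Gamma(I_P(2)\cdot\mathcal{O}_{\mathbb{P}^n_k}))$ is an immersion. Composing with $X_k\setminus\{P\}\hookrightarrow\mathbb{P}^n_k\setminus\{P\}$ and applying the Bertini theorem for smoothness \ref{bertini6} (with trivial log structures, so that log smoothness is ordinary smoothness) over the Grassmannian $G_{\mathbb{P}(\Gamma(I_P(2)\cdot\mathcal{O}_{\mathbb{P}^n_k})),r}=\mathcal{H}_P$ of \ref{zariskiopen}(i), I obtain a dense open $U_1\subset\mathcal{H}_P$ such that for $L\in U_1$ the section $(X_k\setminus\{P\})\cap L$ is smooth of dimension $d-r$.

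The main obstacle is the point $P$, at which the linear system of degree-two forms through $P$ has a base point, so that Bertini does not apply directly; I would resolve this by the same blow-up argument as in \ref{exceptionaldivisorintersection}, now applied to the smooth special fibre $X_k$. Writing $E_k$ for the exceptional divisor of the blow up of $X_k$ at $P$, the regularity of $X_k$ at $P$ makes $E_k$ a linear subvariety of $\mathbb{P}(\Gamma(I_P(2)\cdot\mathcal{O}_{\mathbb{P}^n_k}))$ (by \ref{key}(ii),(iii), as in \ref{closedimmersion}), so the restriction map $\Gamma(I_P(2)\cdot\mathcal{O}_{\mathbb{P}^n_k})\to\mathfrak{m}/\mathfrak{m}^2$ onto the $d$-dimensional cotangent space of $X_k$ at $P$ is surjective. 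Letting $S'$ be its kernel, one has $\dim S'\leq\dim\Gamma(I_P(2)\cdot\mathcal{O}_{\mathbb{P}^n_k})-r$ because $d\geq r$, so \ref{subspaceclosed} produces a dense open $U_2\subset\mathcal{H}_P$ over which the $r$ forms defining $L$ restrict to linearly independent elements of $\mathfrak{m}/\mathfrak{m}^2$. This is precisely the transversality that makes $X_k\cap L_k$ smooth of dimension $d-r$ at $P$.

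Finally I would take $U=U_1\cap U_2$. For any closed $L\in G_K$ with $\spe_G(L)\in U$ the special fibre $X_k\cap L_k$ is then smooth of dimension $d-r$ at $P$ and at every other point, so the reduction of the first step shows that $X_{V_L}\cap L$ is smooth of relative dimension $d-r$ over $V_L$ at the points of $X_k$. The delicate point throughout is the analysis at $P$: the essential input is the linearity of the exceptional divisor, which converts the base point $P$ into an honest Bertini statement on the projectivized tangent space of $X_k$, and one must be careful to control the smoothness of the special fibre itself rather than merely the regularity of the total space (which \ref{exceptionaldivisorintersection} alone provides).
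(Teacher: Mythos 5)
Your proof is correct and follows essentially the same route as the paper: classical Bertini applied to $X_k\setminus\{P\}$ inside $\mathbb{P}(\Gamma(I_P(2)\cdot\mathcal{O}_{\mathbb{P}^n_k}))$, then \ref{subspaceclosed} applied to the kernel of the surjection $\Gamma(I_P(2)\cdot\mathcal{O}_{\mathbb{P}^n_k})\to\mathfrak{m}/\mathfrak{m}^2$ to get transversality at $P$, and finally flatness over $V_L$ by the Cohen--Macaulay/regular-sequence argument (this last step is exactly \ref{CMflat}). The only cosmetic difference is that you obtain the surjectivity onto $\mathfrak{m}/\mathfrak{m}^2$ via the linearity of the exceptional divisor, whereas the paper writes the map down directly in homogeneous coordinates.
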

\begin{proof}
Recall that by \ref{zariskiopen} (i) a hyperplane in $\mathbb{P}(\Gamma(I_P(2)\cdot\mathcal{O}_{\mathbb{P}^n_k}))$ corresponds to a hyperplane in $\mathbb{P}(\Gamma(\mathcal{O}_{\mathbb{P}^n_k}(2)))$ through the point $P$. By the Bertini theorem for smoothness (\cite[I, 6.11]{bertini}), the intersection of $r$ generic hyperplanes in $\mathbb{P}(\Gamma(I_P(2)\cdot\mathcal{O}_{\mathbb{P}^n_k}))$ with the image of $X_k-\{P\}$ is smooth of dimension $d-r$. Thus, there is a dense open $U'\subset \mathcal{H}_P$ such that for any $l\in U'$, the intersection $(X_k-\{P\})\cap l$ is smooth of dimension $d-r$. Now let $\mathfrak{m}\subset\mathcal{O}_{X_k,P}$ be the maximal ideal. Fix homogeneous coordinates $y_0,...,y_N$ on $\mathbb{P}(\Gamma(\mathcal{O}_{\mathbb{P}^n_k}(2)))$ such that $P=(1:0:\cdots :0)$. There is a natural surjective $k$-linear map
\begin{eqnarray*}
\Gamma(I_P(2)\cdot\mathcal{O}_{\mathbb{P}^n_k}) &\to &\mathfrak{m}/\mathfrak{m}^2 \\
\sum_{i=1}^Na_iy_i &\mapsto & \sum_{i=1}^Na_i\frac{y_i}{y_0}.
\end{eqnarray*}
Let $S$ be its kernel. If $\mathcal{Q}$ denotes the tautological quotient on $\mathcal{H}_P$ and $\mathcal{Q}':=\im(S\otimes_k\mathcal{O}_{\mathcal{H}_P}\to\mathcal{Q})$, then by \ref{subspaceclosed} there is a dense open $U''\subset \mathcal{H}_P$ such that $S\otimes_k\mathcal{O}_{U''}=\mathcal{Q}'|_{U''}$ and $\mathcal{Q}/\mathcal{Q}'|_{U''}$ is flat. Thus, we have
\[ \ker(\Gamma(I_P(2)\cdot\mathcal{O}_{\mathbb{P}^n_k})\otimes_k\mathcal{O}_{U''}\to\mathcal{Q}|_{U''})=\ker(\mathfrak{m}/\mathfrak{m}^2\otimes_k\mathcal{O}_{U''}\to\mathcal{Q}/\mathcal{Q}'|_{U''}) \]
and so for every $l\in U''$ the corresponding non-degenerate homogeneous linear polynomials $l_1,...,l_r$ generate an $r$-dimensional subspace of $\mathfrak{m}/\mathfrak{m}^2$. Since $P\in X_k$ is a regular point, this implies that $X_k\cap l$ is regular of dimension $d-r$ at $P$. Hence, $X_k\cap l$ is smooth of dimension $d-r$ for all $l\in U:=U'\cap U''$. Now let $L\in G_K$ be a closed point whose specialization lies in $U$. To complete the proof it suffices to show that $X_{V_L}\cap L$ is flat over $V_L$, which follows from the next lemma.
\end{proof}

\begin{lemma}\label{CMflat}
Let $X\to\mathbb{P}^n_T$ be a morphism of finite type, $G=G_{\mathbb{P}(\Gamma(\mathcal{O}_{\mathbb{P}^n_T}(1))),r}$, $L\in G(T)$. Suppose $P\in X$ is a point of $(X\times_{\mathbb{P}^n_T}L)_k$ such that $\dim_PX=d+1$ and $\dim_P(X\times_{\mathbb{P}^n_T}L)_k=d-r$. If $X$ is Cohen-Macaulay at $P$, then $X\times_{\mathbb{P}^n_T}L$ is flat over $T$ at $P$.
\end{lemma}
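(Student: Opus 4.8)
The plan is to reduce the statement to a local, purely ring-theoretic assertion and then invoke the standard characterization of regular sequences in Cohen--Macaulay local rings together with the flatness criterion over a DVR.

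First I would pass to the local ring. Set $A:=\mathcal{O}_{X,P}$; by hypothesis $A$ is Cohen--Macaulay, and since a Cohen--Macaulay local ring is equidimensional and catenary, $\dim A=\dim_PX=d+1$ (here one uses that $P$ is a closed point of $X_k$, so that $\dim_P$ may be read off as the Krull dimension of the local ring). The linear subvariety $L\subset\mathbb{P}^n_T$ is cut out by $r$ sections of $\mathcal{O}_{\mathbb{P}^n_T}(1)$; choosing a section not vanishing at the image of $P$ to trivialize $\mathcal{O}(1)$ near $P$ and pulling the $r$ defining forms back along $X\to\mathbb{P}^n_T$, I obtain elements $g_1,\dots,g_r\in A$ whose vanishing defines $X\times_{\mathbb{P}^n_T}L$ in a neighbourhood of $P$. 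Thus $B:=\mathcal{O}_{X\times_{\mathbb{P}^n_T}L,\,P}=A/(g_1,\dots,g_r)$, its special fibre is $B/\pi B=A/(g_1,\dots,g_r,\pi)$, and since $P$ lies on the special fibre of the intersection all of $g_1,\dots,g_r,\pi$ lie in the maximal ideal $\mathfrak{m}_A$.

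The heart of the argument is then the dimension bookkeeping. The two hypotheses translate into $\dim A=d+1$ and $\dim A/(g_1,\dots,g_r,\pi)=\dim B/\pi B=d-r$, so the $r+1$ elements $g_1,\dots,g_r,\pi$ cut the dimension of $A$ down by exactly $r+1$. In a Cohen--Macaulay local ring this is precisely the condition for a sequence of elements of the maximal ideal to be part of a system of parameters, and being part of a system of parameters is equivalent to being a regular sequence. Hence $g_1,\dots,g_r,\pi$ is a regular sequence in $A$. Finally, because all these elements lie in $\mathfrak{m}_A$ and $A$ is Noetherian local, any permutation of a regular sequence is again regular; in particular $\pi$ is a nonzerodivisor on $A/(g_1,\dots,g_r)=B$. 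Thus $B$ has no $\pi$-torsion, and as $V$ is a DVR (hence a principal ideal domain, over which torsion-freeness and flatness coincide), $B$ is flat over $V$, i.e. $X\times_{\mathbb{P}^n_T}L$ is flat over $T$ at $P$.

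I expect the only delicate point to be the bookkeeping in the second paragraph: one must check that the $g_i$ genuinely give the scheme-theoretic intersection locally, so that $B/\pi B=A/(g_1,\dots,g_r,\pi)$, and that the two occurrences of $\dim_P$ really compute the Krull dimensions of $A$ and of $B/\pi B$ (this is where the Cohen--Macaulay/equidimensionality hypothesis and the closedness of $P$ enter). Once the elements and the dimensions are pinned down correctly, the remaining ring theory --- the Cohen--Macaulay characterization of regular sequences and the DVR flatness criterion --- is immediate.
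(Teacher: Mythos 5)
Your argument is correct and is essentially the paper's proof in a slightly different packaging: the paper first squeezes $\dim_P(X\times_{\mathbb{P}^n_T}L)$ to $d+1-r$ (Krull's height theorem from below, $1+\dim$ of the special fibre from above), concludes the intersection is Cohen--Macaulay, and then reads off that $\pi$ is a nonzerodivisor, whereas you fold $\pi$ in with $g_1,\dots,g_r$ and apply the ``drops dimension by $r+1$ $\Rightarrow$ part of a system of parameters $\Rightarrow$ regular sequence'' characterization once. The only point worth flagging is your reliance on $P$ being a closed point to identify $\dim_P$ with Krull dimensions of local rings; the lemma does not assume this, but the general case reduces to it since every point specializes to a closed one and nonzerodivisors localize.
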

\begin{proof}
Since the ideal of $X\times_{\mathbb{P}^n_T}L$ in $X$ can be generated by $r$ elements (the hyperplanes corresponding to $L$), we have $\dim_PX\times_{\mathbb{P}^n_T}L\geq \dim_PX-r=d+1-r$. On the other hand, we have $\dim_PX\times_{\mathbb{P}^n_T}L\leq 1+\dim_P(X\times_{\mathbb{P}^n_T}L)_k=1+d-r$. Hence $\dim_PX\times_{\mathbb{P}^n_T}L=d+1-r$ and so $X\times_{\mathbb{P}^n_T}L$ is Cohen-Macaulay since $X$ is. As $\dim_P(X\times_{\mathbb{P}^n_T}L)_k=\dim_PX\times_{\mathbb{P}^n_T}L-1$, it follows that $\pi$ is not a zero divisor on $\mathcal{O}_{X\times_{\mathbb{P}^n_T}L,P}$, i.e. $X\times_{\mathbb{P}^n_T}L$ is flat over $T$ at $P$.
\end{proof}

\begin{proposition}\label{awayfromP}
Let $X$ be a flat projective $T$-scheme of dimension $d+1$ and with geometrically reduced generic fibre $X_K$. Let $X\hookrightarrow\mathbb{P}^n_T$ be a closed immersion, and $G:=G_{\mathbb{P}(\Gamma(\mathcal{O}_{\mathbb{P}^n_T}(2))),d}$. Fix a closed point $P\in\mathbb{P}^n_k$. Then there is a dense open $U\subset \mathcal{H}_P$ such that for any closed $L\in G_K$ such that $\spe_G(L)\in U$, $X_k\cap\spe_G(L)$ is finite over $k$ and $X_{K_L}\cap L$ is \'etale over $K_L$ away from $\spe^{-1}(P)\cap X_{K_L}\cap L$. In particular, if $P\notin X$, then $X_{K_L}\cap L$ is \'etale over $K_L$.
\end{proposition}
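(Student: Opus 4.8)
The plan is to treat the two conclusions by different mechanisms---\'etaleness of the generic fibre by an abstract Bertini argument carried out over $K$, and finiteness of the special fibre by a dimension count---and then to package both into one dense open of $\mathcal{H}_P$ using the blow-up embedding of \ref{closedimmersion} and the dictionary of \ref{zariskiopen} (with $G'=G_{\mathbb{P}(\Gamma(I_P(2))),d}$). First I would reduce the \'etaleness statement to the blow-up $\tilde{X}\to X$ at $P$. As $P$ is a closed point of the special fibre, this blow-up is an isomorphism over $X_K$, so $\tilde{X}_K\cong X_K$; and since $\Gamma(I_P(2))\otimes_VK=\Gamma(\mathcal{O}_{\mathbb{P}^n_T}(2))\otimes_VK$, the embedding $\phi:\tilde{X}\hookrightarrow\mathbb{P}(\Gamma(I_P(2)))$ of \ref{closedimmersion} restricts on the generic fibre to the $2$-uple embedding of $X_K$. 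Via the isomorphism $\iota:G_K\cong G'_K$ of \ref{zariskiopen} (ii), a closed $L\in G_K$ thus gives $X_{K_L}\cap L\cong\tilde{X}_{K_L}\cap\iota(L)$, and the points lying over $\spe^{-1}(P)$ correspond exactly to those lying over the exceptional divisor $E$. In particular, when $P\notin X$ the blow-up is trivial, $E=\emptyset$ and $\spe^{-1}(P)\cap X=\emptyset$, which will give the final assertion.

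Because $X_K$ is geometrically reduced of dimension $d$, its smooth locus $X_K^{\sm}$ is dense open and the complement $\Sigma$ has dimension $\leq d-1$. I would apply the abstract Bertini theorem \ref{bertini7} over the field $K$ to the immersion $\tilde{X}_K\hookrightarrow\mathbb{P}(\Gamma(I_P(2)))_K$, with $m:\mathcal{X}=X_K^{\sm}\hookrightarrow X_K$ the open immersion and $\mathcal{E}=\Omega^1_{X_K^{\sm}/K}$; this is a vector bundle of rank $d=\dim\mathcal{X}$, and $m^*\Omega^1$ surjects onto it (tautologically, $X_K^{\sm}$ being open). Taking $r=d$ produces a dense open of $G'_K$ over which $\cok(\phi)$ has rank $d-r=0$; since over $X_K^{\sm}$ the bundle $\mathcal{E}$ is the sheaf of differentials of the section itself, a zero cokernel means $\Omega^1$ of $X_K^{\sm}\cap\iota(L)$ vanishes, so the section is unramified and, being finite over a field, \'etale. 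A complementary count cutting $\Sigma$ (of dimension $\leq d-1$) by $d$ hyperplanes gives a dense open over which $\Sigma\cap\iota(L)=\emptyset$; on the intersection $\mathcal{U}_K\subset G'_K$ of the two opens, $\tilde{X}_{K_L}\cap\iota(L)=X_K^{\sm}\cap\iota(L)$ is \'etale.

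It remains to obtain the finiteness statement and to descend $\mathcal{U}_K$ to $\mathcal{H}_P$. Finiteness of $X_k\cap\spe_G(L)$ is a Bertini dimension statement on the special fibre: the linear system $|\Gamma(I_P(2)\cdot\mathcal{O}_{\mathbb{P}^n_k})|$ is base-point-free on the blow-up $\tilde{X}_k$, so cutting $\tilde{X}_k$ (of dimension $d$) by $d$ members gives dimension $0$ and, since $\dim E=d-1$, a generic such section misses $E$; as $\dim(X_k\cap l)$ is intrinsic to $l$, this defines a dense open $U_1\subset\mathcal{H}_P$. For the descent I would carry $\mathcal{U}_K$ across $\iota$, spread it out over $T$ to a dense open of $G'$ whose special trace is dense open in $G'_k$, intersect with $G'_0$, and push forward along $\rho:G'_0\to\mathcal{H}_P$ (using that $\rho$ is surjective with a section, \ref{zariskiopen} (iii)) to a dense open $U_2\subset\mathcal{H}_P$; the compatibilities \ref{zariskiopen} (iv)--(v) then show that $\spe_G(L)\in U_2$ forces $\spe_{G'}(\iota(L))$ into the spread-out good locus. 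Setting $U=U_1\cap U_2$ completes the argument.

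The main obstacle is this descent: the \'etaleness established over $G'_K$ a priori depends on the full specialization $\spe_{G'}(\iota(L))\in G'_0$, not merely on its image $\spe_G(L)\in\mathcal{H}_P$, and the fibres of $\rho$ are positive-dimensional. The point---exactly as in the proof of \ref{exceptionaldivisorintersection}---is that the fibre direction of $\rho$ records only the position of the linear section relative to the exceptional divisor, and so governs the section solely in a neighbourhood of $E=\spe^{-1}(P)$; away from $\spe^{-1}(P)$ the section is determined by the image in $\mathcal{H}_P$ alone, which is precisely why the statement is phrased away from $\spe^{-1}(P)$ and why a dense open of $\mathcal{H}_P$ is enough. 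I note finally that no assumption on $\cha(K)$ is needed: geometric reducedness already makes $X_K^{\sm}$ dense and forces the vanishing cokernel to detect separability, hence \'etaleness.
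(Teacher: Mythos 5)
There is a genuine gap, and you have in fact located it yourself: the descent from a dense open of $G'_K$ to a dense open of $\mathcal{H}_P$ does not work, and the sentence you offer to bridge it is an assertion of essentially the statement to be proved rather than an argument. Concretely: your application of \ref{bertini7} over $K$ produces a proper closed bad locus $B\subset G'_K$. Spreading out, its closure $\bar{B}$ meets $G'_k$ in a closed subset of dimension $\leq\dim G'_k-1$, so avoiding $\bar{B}_k$ is a condition on $\spe_{G'}(\iota^{-1}(L))$ --- but the fibres of $\rho:G'_0\to\mathcal{H}_P$ are positive-dimensional (the Grassmannians of $\Gamma(I_P(2))$ and of $\Gamma(I_P(2)\cdot\mathcal{O}_{\mathbb{P}^n_k})$ differ in dimension by $d$), so $\bar{B}_k$ can perfectly well dominate $\mathcal{H}_P$, and then no dense open $U_2\subset\mathcal{H}_P$ satisfies $\rho^{-1}(U_2)\subset G'_k\setminus\bar{B}_k$. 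To repair this you would need to know that the locus of $L'$ for which $X_{K_{L'}}\cap L'$ fails to be \'etale \emph{away from} $\spe^{-1}(P)$ is constant along the fibres of $\rho$; that is precisely the content of the proposition, so your argument is circular at this point.

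The paper avoids the problem by never leaving $\mathcal{H}_P$: it runs the abstract Bertini theorem \ref{bertini7} directly on the special fibre $\mathring{X}=X_k\setminus\{P\}$, viewed inside $\mathbb{P}(\Gamma(I_P(2)\cdot\mathcal{O}_{\mathbb{P}^n_k}))$ whose hyperplane Grassmannian \emph{is} $\mathcal{H}_P$ by \ref{zariskiopen}(i). For this one needs a rank-$d$ quotient of $\Omega^1$ defined over $V$, not just over $K$; the paper takes $\mathcal{X}$ to be the closure of $X^{\sm}$ (the $T$-smooth locus) in $\Grass_d(\Omega^1_{X/T})$, which is $T$-flat with $\dim\mathcal{X}_k=d$ and carries the tautological rank-$d$ quotient $\mathcal{E}$ of $m^*\Omega^1_{X/T}$. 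Surjectivity of the conormal map onto $\mathcal{E}$ over the special fibre, which depends only on $l=\spe_G(L)\in\mathcal{H}_P$, then propagates by Nakayama to the points of $X_{K_L}\cap L$ specializing into $\mathring{X}$, giving \'etaleness there; the locus $X\setminus X^{\sm}$ is disposed of by the dimension count you also use. So the correct mechanism is special-fibre Bertini plus Nakayama, not generic-fibre Bertini plus spreading out. Your treatment of the finiteness of $X_k\cap\spe_G(L)$ and of the case $P\notin X$ is fine.
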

\begin{proof}
Let $X^{\sm}\subset X$ be the open subset of points smooth of relative dimension $d$ over $T$. We first show that $\dim(X-X^{\sm})\leq d$. If not, then there is an irreducible component $Y$ of $X$ with $\dim Y=d+1$ and $Y\cap X^{\sm}=\emptyset$. Since $Y_K$ is geometrically reduced this is only possible if $Y_K=\emptyset$, which cannot happen as $\dim X_k=d$ ($X$ is flat over $T$). Thus, $\dim(X-X^{\sm})\leq d$.

Now there is  $\Omega^1_{X/T}|_{X^{\sm}}$ is locally free of rank $d$ and so defines a morphism $X^{\sm}\to\Grass_d(\Omega^1_{X/T})$ of $X$-schemes, which is an open immersion since $\Grass_d(\Omega^1_{X/T})\times_XX^{\sm}=\Grass_d(\Omega^1_{X^{\sm}/T})=X^{\sm}$. Let $\mathcal{X}$ be the scheme-theoretic closure of $X^{\sm}$ in $\Grass_d(\Omega^1_{X/T})$, and let $m:\mathcal{X}\to X$ be the canonical map. Since $\mathcal{X}$ is reduced (because $X^{\sm}$ is) and each of its irreducible components dominates $T$, $\mathcal{X}$ is flat over $T$ and so $\dim\mathcal{X}_k=\dim\mathcal{X}_K$ (\cite[IV, 14.2.5]{ega}), whence $\dim\mathcal{X}_k=d$. Moreover, by construction there is a vector bundle $\mathcal{E}$ of rank $d$ on $\mathcal{X}$ and a surjective map
\[ m^*\Omega^1_{X/T}\twoheadrightarrow\mathcal{E} \]
of $\mathcal{O}_{\mathcal{X}}$-modules. Since $\mathring{X}:=X_k-\{P\}$ is contained in $\mathbb{P}(\Gamma(I_P(2)\cdot\mathcal{O}_{\mathbb{P}^n_k}))$ we can apply \ref{bertini7} to $\mathring{\mathcal{X}}:=\mathcal{X}_k-m^{-1}(P)$ to find a dense open $U\subset\mathcal{H}_P$ such that for all closed $l\in U$, if $I$ denotes the ideal sheaf of $\mathring{X}$ generated by non-degenerate hyperplanes $l_1,...,l_d$ corresponding to $l$, then the map
\[ I/I^2\otimes_{\mathcal{O}_{\mathring{X}}}\mathcal{O}_{\mathring{\mathcal{X}}}\to\mathcal{E}|_{\mathring{\mathcal{X}}\times_{X_k}(X_k\cap l)} \]
is surjective. So if $L\in G_K$ is a closed point such that $\spe(L)\in U$ and $I_L\subset\mathcal{O}_{X_{V_L}}$ denotes the ideal generated by non-degenerate hyperplanes corresponding to $L$ then by Nakayama's lemma the map
\[ I_L/I_L^2\otimes_{\mathcal{O}_{X}}\mathcal{O}_{\mathcal{X}}\to\mathcal{E}|_{\mathcal{X}\times_X(X_{V_L}\cap L)} \]
is surjective at the points of $X^{\sm}_{K_L}\cap L$ which specialize to points of $\mathring{X}$. This implies that $X^{\sm}_{K_L}\cap L$ is \'etale over $K$ at the points which do not specialize to $P$.

To finish, we claim that we can shrink $U$ so that $Z_{K_L}\cap L\subset\spe^{-1}_X(P)$, where $Z:=X-X^{\sm}$. To see this, note that $\dim Z\leq d$, so if $C\subset Z$ is an irreducible component which dominates $T$, then $\dim C_k<d$. Hence up to shrinking $U$ we can assume $(C_k-\{P\})\cap l=\emptyset$ for all $l\in U$, whence $C_{K_L}\cap L\subset\spe^{-1}_X(P)$ for all $L\in G_K$ with $\spe(L)\in U$. If $C\subset Z$ is an irreducible component which doesn't dominate $T$, then $C_K=\emptyset$. This proves the claim.
\end{proof}

We now prove Theorem \ref{goodetalemap}. We fix $G:=G_{\mathbb{P}(\Gamma(\mathcal{O}_{\mathbb{P}^n_T}(2))),d}$.

\begin{theorem}\label{maingoodnhbprop}
Assume $\cha(K)=0$. Let $X$ be an flat projective $T$-scheme of dimension $d+1$ and with reduced generic fibre $X_K$. Let $X\hookrightarrow\mathbb{P}^n_T$ be a closed immersion and fix a closed point $P\in X_k$ with $\dim_PX=d+1$.
\begin{enumerate}[(i)]
\item If $X$ is regular at $P$, then there is a dense open $U\subset \mathcal{H}_P$ such that for any $L\in G(K)$ with $\spe_{G}(L)\in U$, $X_k\cap\spe_{G}(L)$ is finite over $k$ and $X_{K}\cap L$ is \'etale over $K$.
\item If $X$ is smooth at $P$, then there is a dense open $U\subset \mathcal{H}_P$ such that for any closed $L\in G_K$ with $\spe_{G}(L)\in U$, $X_k\cap\spe_{G}(L)$ is finite over $k$ and $X_{K_L}\cap L$ is \'etale over $K_L$.
\end{enumerate}
Furthermore, we have $X_{V_L}\cap L\neq\emptyset$ for all such $L$.
\end{theorem}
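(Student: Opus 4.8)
The plan is to study the generic fibre $X_{K_L}\cap L$ by separating its points into those specializing to $P$ and those not, and to control the two loci by different results. For the points not specializing to $P$, both parts are handled simultaneously by Proposition \ref{awayfromP} (note $X_K$ is geometrically reduced, since $\cha(K)=0$ makes $K$ perfect): it furnishes a dense open $U_0\subset\mathcal{H}_P$ such that for closed $L\in G_K$ with $\spe_G(L)\in U_0$ the special fibre $X_k\cap\spe_G(L)$ is finite over $k$ and $X_{K_L}\cap L$ is \'etale over $K_L$ away from $\spe^{-1}(P)\cap X_{K_L}\cap L$. Since $X\cap L$ is proper over $T$, the closure of any closed point of the generic fibre meets the special fibre, so it remains only to control the points of $\spe^{-1}(P)$; by Lemma \ref{gen} these are exactly the closed points of $\Spec(\mathcal{O}_{X\cap L,P}\otimes_VK)$.

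For part (i), where $X$ is only assumed regular at $P$, I would feed this into Lemma \ref{exceptionaldivisorintersection} with $r=d$: the hypothesis $d\geq r$ is trivially satisfied, so one obtains a dense open $U_1\subset\mathcal{H}_P$ such that for every rational point $L\in G(T)=G(K)$ with $\spe_G(L)\in U_1$ the intersection $X\cap L$ is regular at $P$ of dimension $d+1-d=1$. Thus $\mathcal{O}_{X\cap L,P}$ is a one-dimensional regular local ring, i.e. a discrete valuation ring. Because $X_k\cap\spe_G(L)$ is finite over $k$ by the choice of $U_0$, the reduction $\mathcal{O}_{X\cap L,P}/\pi$ is Artinian; as this DVR is a domain, $\pi\neq 0$ and is therefore a nonzerodivisor, so inverting it yields its fraction field, a single point which is finite over $K$ by the argument of Lemma \ref{gen} and separable since $\cha(K)=0$. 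Hence the unique point of $\spe^{-1}(P)$ is \'etale over $K$, and with $U:=U_0\cap U_1$ the scheme $X_K\cap L$ is \'etale over $K$ throughout.

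For part (ii), where $X$ is smooth at $P$, the restriction to rational $L$ becomes unnecessary and one argues directly over $K_L$. Here I would pass to the smooth locus $X^{\sm}$, an open neighbourhood of $P$ that is smooth over $T$ of relative dimension $d$, and apply Proposition \ref{smoothcase} to it with $r=d$. This gives a dense open $U_2\subset\mathcal{H}_P$ such that for closed $L\in G_K$ with $\spe_G(L)\in U_2$ the scheme $X^{\sm}_{V_L}\cap L$ is smooth of relative dimension $d-d=0$, i.e. \'etale, over $V_L$ at the points of its special fibre, in particular at $P$. Since the \'etale locus is open and open subschemes are stable under generization, every generic point specializing to $P$ lies on an open over which $X^{\sm}_{V_L}\cap L\to\Spec V_L$ is \'etale, so its generic fibre is \'etale over $K_L$. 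Combined with $U_0$, setting $U:=U_0\cap U_2$ makes $X_{K_L}\cap L$ \'etale over $K_L$ everywhere.

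Finally, the non-emptiness $X_{V_L}\cap L\neq\emptyset$ follows from a dimension count on the special fibre: $X_k$ has dimension $d$ in projective space while the specialization of $L$ is a linear subvariety of codimension $d$, so the classical projective dimension theorem gives $X_k\cap\spe_G(L)\neq\emptyset$, whence $X_{V_L}\cap L\neq\emptyset$. The main obstacle I anticipate is the passage from \emph{regularity} (respectively smoothness) of $X\cap L$ at the single closed point $P$ to the \'etaleness of the whole \emph{generic fibre} near $P$: this requires properness to ensure that generic points specialize, Lemma \ref{gen} to identify $\spe^{-1}(P)$ with the generic fibre of the local ring at $P$, and---most delicately---the finiteness of $X_k\cap\spe_G(L)$ to exclude a vertical component through $P$ which would otherwise break the discrete-valuation-ring argument underlying part (i).
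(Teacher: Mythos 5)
Your proof follows essentially the same route as the paper: the same dichotomy between points of the generic fibre specializing to $P$ and the rest, with Proposition \ref{awayfromP} handling the latter and Lemma \ref{exceptionaldivisorintersection} (resp.\ Proposition \ref{smoothcase}) handling the former in case (i) (resp.\ (ii)); your direct discrete-valuation-ring argument at $P$ is just an unpacked version of the paper's appeal to \ref{CMflat} plus regularity, and it is correct.

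The one place you diverge is the final non-emptiness clause. As you read it, $X_{V_L}\cap L\neq\emptyset$ is in fact trivial --- $L$ passes through $P$ and $P\in X_k$, so $P$ itself lies in $X_{V_L}\cap L$ and no dimension count is needed. What the paper actually proves there (and what is used later, e.g.\ condition (3) of the nice hyperplanes in \S6) is the stronger statement that the \emph{generic fibre} $X_K\cap L$ is non-empty: since $\dim\mathcal{O}_{X,P}=d+1$ and $L$ is cut by $d$ equations through $P$, one has $\dim\mathcal{O}_{X\cap L,P}\geq 1$, which forces $\mathcal{O}_{X\cap L,P}\otimes_VK\neq 0$. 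Your part (i) argument already yields this implicitly (the DVR $\mathcal{O}_{X\cap L,P}$ has non-zero fraction field lying over $K$), and in part (ii) \'etaleness of $X^{\sm}_{V_L}\cap L$ over $V_L$ at $P$ gives flatness and hence the same conclusion; you should simply state this explicitly rather than fall back on the special fibre.
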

\begin{proof}
(i) : Since $X$ is regular, by \ref{exceptionaldivisorintersection} there is a dense open $U'\subset\mathcal{H}_P$ such that, for all $L\in G(T)$ with $\spe_G(L)\in U'$, $X\cap L$ is regular at $P$, and flat over $T$ at $P$ of dimension 1 by \ref{CMflat}. Thus, $X_K\cap L$ is \'etale over $K$ at the points specializing to $P$. Moreover, since $\dim X_k=d$, up to shrinking $U'$ we may assume that for all $l\in U'$ we have $\dim (X_k-\{P\})\cap l=0$, hence $X_k\cap l$ is finite. For the remaining points, by \ref{awayfromP} there is a dense open $U''\subset\mathcal{H}_P$ such that for any $L\in G(K)$ with $\spe(L)\in U''$ the scheme $X_K\cap L$ is \'etale at the points which not do specialize to $P$.

Hence we can take $U=U'\cap U''$ for the first assertion. For the assertion that $X_{K}\cap L\neq\emptyset$, consider the local ring $\mathcal{O}_{X,P}$. By assumption it has dimension $d+1$. Since $L$ is the intersection of $d$ hyperplanes passing through $P\in X$, we have $\mathcal{O}_{X\cap L,P}\neq 0$ and hence $\dim\mathcal{O}_{X\cap L,P}\geq 1$. If $\mathcal{O}_{X\cap L,P}\otimes_VK=0$, then the kernel of the canonical map $\mathcal{O}_{X,P}\to\mathcal{O}_{X\cap L,P}$ contains a power of the uniformizer $\pi\in V$. Thus, $\dim\mathcal{O}_{X\cap L,P}=\dim\mathcal{O}_{X_k\cap L,P}=0$, a contradiction. Hence $\mathcal{O}_{X\cap L,P}\otimes_VK\neq 0$, completing the proof.

(ii) : The proof is similar to (i), the reference to \ref{exceptionaldivisorintersection} being replaced by \ref{smoothcase}.
\end{proof}

\section{Construction of good neighbourhoods}
In this section we assume $\cha(K)=0$.

\subsection{Setup}
Let $\bar{X}$ be a connected smooth projective $T$-scheme of finite type of relative dimension $d$. Fix a closed immersion $\bar{X}\hookrightarrow\mathbb{P}^n_T$, an open subset $X\subset\bar{X}$, and a closed point $P\in X_k$.

\subsection{Good hyperplanes}\label{goodnbhdsetup}
Let $V\subset V'$ be a finite extension of discrete valuation rings, $K'=V'\otimes_VK$, and $T':=\Spec(V')$. Let $H_0,H_1,...,H_{d-1}$ be a set of $T'$-hyperplanes and $L:=H_1\cap H_2\cap\cdots\cap H_{d-1}$. Consider the following conditions:
\begin{enumerate}
\item $H_1,...,H_{d-1}$ are non-degenerate and pass through $P$, and $H_0$ does not pass through $P$
\item $\bar{X}_{T'}\cap L$ is smooth of relative dimension 1 over $T'$
\item $Y_{K'}\cap L$ is \'etale over $K'$
\item $\bar{X}_{T'}\cap L\cap H_0$ is \'etale over $T'$
\item if $D$ is the scheme-theoretic closure of $Y_K$ in $Y$, then $(D_{T'}\cap L\cap H_0)_k=\emptyset$.
\suspend{enumerate}
Suppose $H_0,....,H_{d-1}$ are $T'$-hyperplanes satisfying (1)-(5). Note that since $\bar{X}_{K}$ is geometrically connected, by (2) it follows from \cite[I, 7.1]{bertini} that $\bar{X}_{K'}\cap L$ is a smooth geometrically connected projective $K'$-curve. Moreover, (4) and (5) imply that $X_{K'}\cap L\cap H_0=\bar{X}_{K'}\cap L\cap H_0$ is finite \'etale over $K$.

We say that $T$-hyperplanes $H_0,...,H_{d-1}$ are \emph{good} if they satisfy conditions (1)-(5) and moreover
\resume{enumerate}
\item if, for any finite extension $T'\to T$ as above, $H_1',...,H_{d-1}'$ is any set of non-degenerate $T'$-hyperplanes with same specialization (as a point of the appropriate grassmannian) as $H_1,...,H_{d-1}$, then conditions (2)-(5) hold for the hyperplanes $(H_0)_{T'},H_1',...,H_{d-1}'$.
\end{enumerate}

\begin{proposition}\label{good}
Good $T$-hyperplanes exist, up to replacing the closed immersion $\bar{X}\hookrightarrow\mathbb{P}^n_T$ by its double.
\end{proposition}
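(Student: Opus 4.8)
The plan is to split the construction into the $d-1$ hyperplanes $H_1,\dots,H_{d-1}$ through $P$, which cut $\bar X$ down to a smooth relative curve, and the single hyperplane $H_0$ off $P$, which in turn cuts that curve into a finite \'etale $T'$-scheme missing the boundary. After replacing the embedding by its double, the degree-$2$ linear system acquires the positivity required by the results of \S4 — in particular for the comparison \ref{zariskiopen} between hyperplanes through $P$ and their specializations — so that I may work throughout with $G=G_{\mathbb{P}(\Gamma(\mathcal{O}(2))),r}$ and the incidence variety $\mathcal{H}_P$. Each of the conditions (1)--(5) will be shown to hold as soon as the specialization of the chosen hyperplanes lies in a suitable dense open, and the whole difficulty is to make these dense opens depend only on the specialization, which is exactly what condition (6) demands.

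First I would produce $H_1,\dots,H_{d-1}$. As $\bar X$ is smooth, hence Cohen--Macaulay, over $T$, Proposition \ref{smoothcase} with $r=d-1$ gives a dense open of $\mathcal{H}_P$ over whose points $\bar X_{T'}\cap L$ (with $L=H_1\cap\cdots\cap H_{d-1}$) is smooth of relative dimension $1$ along its special fibre; combined with the Bertini theorem for smoothness over $K$ in characteristic $0$ for the generic fibre, and with \ref{CMflat} for flatness, this yields condition (2). For condition (3), since $P$ lies in $X$ it is disjoint from the boundary $Y$, so the sections of $\mathcal{O}(2)$ vanishing at $P$ still separate points along $Y$; applying the characteristic-$0$ Bertini theorem for \'etaleness to $Y_K$ — the same reasoning as in \ref{awayfromP}, now with $P$ disjoint from $Y$ — gives a further dense open over which $Y_{K'}\cap L$ is \'etale over $K'$.

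Next, fixing such an $L$, the relative curve $\mathcal{C}:=\bar X_{T'}\cap L$ is smooth and projective of relative dimension $1$, and the horizontal part $D\cap L$ is finite over $T'$. A generic $H_0$ not passing through $P$ (an open condition) meets $\mathcal{C}$ transversally — on the generic fibre by characteristic-$0$ Bertini for the curve $\mathcal{C}_K$, on the special fibre by a classical Bertini argument using the smoothness of $\mathcal{C}_k$ — so that $\mathcal{C}\cap H_0$ is finite \'etale over $T'$ by \ref{CMflat}, which is (4); and choosing $H_0$ to avoid the finitely many special-fibre points of $D\cap L$ gives (5). This settles (1)--(5).

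The step I expect to be the main obstacle is condition (6), that (2)--(5) continue to hold for every non-degenerate lift $H_1',\dots,H_{d-1}'$ sharing the specialization of $H_1,\dots,H_{d-1}$, with $H_0$ held fixed. The generic-fibre parts of (2)--(5) a priori vary with the lift, so they cannot simply be imposed generically; instead each of them fails only on a proper closed subset $Z$ of $G_{K'}$, and by Corollary \ref{sp} the complement $G_{K'}\setminus Z$ contains the whole fibre $\spe^{-1}(x)$ precisely when $x$ lies in a certain dense open of $G_k$. Using \ref{zariskiopen} to transport this between the two projective spaces and \ref{subspaceclosed} to control the linear-algebra loci that occur, I would choose the specialization of $L$ to lie in the intersection of these dense opens; then every lift with that specialization is good, which is (6). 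Finally, since $k$ is algebraically closed the resulting dense open of admissible specializations has a $k$-point, and by \ref{sp} it lifts to $T$-rational hyperplanes $H_0,\dots,H_{d-1}$, producing good $T$-hyperplanes.
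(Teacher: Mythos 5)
Your proposal is correct and follows essentially the same route as the paper: conditions (2) and (3) come from \ref{smoothcase} and \ref{awayfromP} via a dense open of $\mathcal{H}_P$, $H_0$ is then chosen using special-fibre Bertini together with \ref{CMflat}, and (6) holds because everything is controlled by the specialization alone (the paper gets this for free since \ref{smoothcase} and \ref{awayfromP} are already stated for \emph{all} closed $L\in G_K$ with given specialization, which makes your extra generic-fibre Bertini arguments and the closed-bad-locus patching via \ref{sp} redundant rather than wrong). One small correction: the final lifting of the chosen $k$-point of the dense open to a $T$-point of $G$ is not an application of \ref{sp} but of the smoothness of $G$ over the complete discrete valuation ring $V$, which makes $G(T)\to G(k)$ surjective.
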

\begin{proof}
Let $G:=G_{\mathbb{P}(\Gamma(\mathcal{O}_{\mathbb{P}^n_T}(2))),d-1}$ (notation as in \S\ref{setupgrass}). By \ref{smoothcase} (resp. \ref{awayfromP}), there is a dense open $U_1\subset \mathcal{H}_P$ such that condition (2) (resp. (3)) holds for any closed $L\in G_K$ with $\spe(L)\in U_1$.

Pick $z\in U(k)$. Since $G$ is smooth over $T$ we can lift $z\in U_k$ to a point in $G(T)$ and this gives $H_1,...,H_{d-1}$.

By the Bertini theorem for smoothness one can find $H_0$ such that $X_k\cap L\cap H_0$ is \'etale over $k$. Then (4) holds by \ref{CMflat}. Moreover, since $\dim(D_k\cap\spe(L))=0$, we can choose $H_{0}$ such that it does not intersect $D_k\cap\spe(L)$, hence (5) is satisfied. So $H_0,H_1,...,H_{d-1}$ satisfy (1)-(5).

Finally, for condition (6), note that it holds for (5) automatically since it only depends on the special fibre of $L$ and $H_0$, and that it also holds for (2), (3), and (4) given our choice of $U$ and $H_0$.
\end{proof}

\subsection{The map to affine space} Let $H_0,H_1,...,H_{d-1}$ be good $T$-hyperplanes and let $F_i$ be a homogeneous equation of $H_i$ for $i=0,1,...,d-1$. Let $Y_1,...,Y_{d-1}$ be coordinates on $\mathbb{A}^{d-1}_T$ and let
\[ \bar{X}'=V(F_1-Y_1F_0,F_2-Y_2F_0,...,F_{d-1}-Y_{d-1}F_0) \]
be the closed subscheme of $\bar{X}\times_T\mathbb{A}^{d-1}_T$ of homogeneous ideal generated by the polynomials $F_1-Y_1F_0,F_2-Y_2F_0,...,F_{d-1}-Y_{d-1}F_0$. Let $p:\bar{X}'\to\mathbb{A}^{d-1}_T$, and $q:\bar{X}'\to\bar{X}$ be the natural projections. Note that $p$ is a projective morphism by construction. Finally, let $C:=L\cap H_0$.

\begin{lemma}\phantomsection\label{loc}
\begin{enumerate}[(i)]
\item For $z\in \bar{X}\cap C$ we have $q^{-1}(z)=\mathbb{A}^{d-1}_{k(z)}$, where $k(z)$ denotes the residue field of $\bar{X}$ at $z$.
\item The map $\bar{X}'\setminus V(F_0)\to \bar{X}\setminus V(F_0)$ induced by $q$ is an isomorphism.
\item If we define $Z:=V(F_0)\subset\bar{X}'$, then $Z=(\bar{X}\cap C)\times_T\mathbb{A}^{d-1}_T$.
\end{enumerate}
\end{lemma}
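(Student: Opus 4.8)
The plan is to reduce all three statements to a single local computation, since each is local on $\bar{X}$. First I would cover $\bar{X}$ by affine opens $\Spec(A)$ over which the line bundle $\mathcal{L}$ carrying the sections $F_0,\dots,F_{d-1}$ is trivialized; under such a trivialization the $F_i$ become regular functions $f_i\in A$, while the pulled-back coordinates $Y_j$ are polynomial indeterminates over $A$. Over such a chart the defining equations of $\bar{X}'$ read $f_i-Y_if_0$ ($i=1,\dots,d-1$), so that
\[ \bar{X}'\cap\left(\Spec(A)\times_T\mathbb{A}^{d-1}_T\right)=\Spec\left(A[Y_1,\dots,Y_{d-1}]/(f_1-Y_1f_0,\dots,f_{d-1}-Y_{d-1}f_0)\right). \]
Since the trivializations are compatible with the projections $q$ and $p$ and with the sections $F_i$, the local computations glue, and all three parts follow by manipulating this presentation.

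For (ii), on the open locus where $f_0$ is invertible each relation $f_i-Y_if_0=0$ can be solved uniquely as $Y_i=f_i/f_0$, so the structure homomorphism $A[1/f_0]\to (A[Y_i]/(f_i-Y_if_0))[1/f_0]$ is an isomorphism; this exhibits $q$ as an isomorphism over $\bar{X}\setminus V(F_0)$, the inverse being the section $x\mapsto (x,f_1(x)/f_0(x),\dots,f_{d-1}(x)/f_0(x))$. For (iii), I would reduce modulo $f_0$: since $f_i-Y_if_0\equiv f_i\pmod{f_0}$, the coordinate ring of $Z=V(F_0)$ becomes $(A/(f_0,f_1,\dots,f_{d-1}))[Y_1,\dots,Y_{d-1}]$, and $A/(f_0,\dots,f_{d-1})$ is exactly the coordinate ring of $\bar{X}\cap H_0\cap H_1\cap\cdots\cap H_{d-1}=\bar{X}\cap C$, giving $Z=(\bar{X}\cap C)\times_T\mathbb{A}^{d-1}_T$. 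For (i), I would compute the fiber directly: for $z\in\bar{X}\cap C$ one has $f_0(z)=f_1(z)=\cdots=f_{d-1}(z)=0$, because $z$ lies on all of $H_0,\dots,H_{d-1}$, and hence
\[ q^{-1}(z)=\Spec\left(k(z)[Y_i]/(f_i(z)-Y_if_0(z))\right)=\Spec(k(z)[Y_1,\dots,Y_{d-1}])=\mathbb{A}^{d-1}_{k(z)}. \]

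I do not expect any of this to be difficult: the essential content is the identity $f_i\equiv Y_if_0$ on $\bar{X}'$, from which (ii) (inverting $f_0$), (iii) (setting $f_0=0$), and (i) (the fiber at a common zero of all the $f_i$) all read off immediately. The only point demanding care is the bookkeeping in passing between the homogeneous description of $\bar{X}'$, given by sections $F_i$ of the fixed line bundle $\mathcal{L}$ on $\bar{X}$, and the affine charts in which these become honest functions $f_i$; one must check that the vanishing locus $V(F_0)$ and the intersection $\bar{X}\cap C$ are correctly recovered from the local data, and that nothing degenerates from $\mathcal{L}$ having degree $2$ (as it does after doubling the embedding, cf. \ref{good}) — which is harmless since $\mathcal{L}$ is trivial on each chart. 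This is routine, but I would state it explicitly to keep the scheme structures honest.
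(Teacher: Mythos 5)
Your proposal is correct and follows essentially the same route as the paper: compute the fibre over a common zero of the $F_i$ for (i), invert $F_0$ and solve $Y_i=F_i/F_0$ for (ii), and reduce modulo $F_0$ for (iii). The only cosmetic difference is that you work on affine charts trivializing the line bundle, whereas the paper uses directly that $\bar{X}\setminus V(F_0)$ is affine; the computations are identical.
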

\begin{proof}
(i) : $q^{-1}(z)$ is the closed subscheme of $\mathbb{A}^{d-1}_{k(z)}$ cut out by the polynomials $F_1(z)-Y_1F_0(z),F_2(z)-Y_2F_0(z),...,F_{d-1}(z)-Y_{d-1}F_0(z)$. Since $z\in\bar{X}\cap C$, $F_i(z)=0$ for $0\leq i\leq d-1$, and the claim follows.
(ii) : $U:=\bar{X}\setminus V(F_0)$ is affine and $\bar{X}'\setminus V(F_0)$ is the closed subscheme of $U\times\mathbb{A}^{d-1}$ of ideal generated by $F_1/F_0-Y_1,...,F_{d-1}/F_0-Y_{d-1}$, which is obviously isomorphic to $U$.

(iii) : $Z$ is the closed subscheme of $\bar{X}\times\mathbb{A}^{d-1}$ of ideal generated by $F_0,F_1,..,F_{d-1}$, i.e. $(\bar{X}\cap C)\times\mathbb{A}^{d-1}$.
\end{proof}

Define $X':=\bar{X}-(Y\cup (\bar{X}\cap H_0))=X-X\cap H_0$. Note that $P\in X'$ since $P\notin H_0$. Moreover, by \ref{loc} (ii) the map $q$ induces an isomorphism $q^{-1}(X')\cong X'$ so we shall identity $X'$ with $q^{-1}(X')$. Finally set $Y':=\bar{X}'-X'$.

Now, the fibre over the origin $0\in\mathbb{A}^{d-1}(T)$ of $p:\bar{X}'\to\mathbb{A}^{d-1}_T$ is the scheme $\bar{X}\cap L$.

\begin{lemma}\label{spread}
Let $y\in\mathbb{A}^{d-1}_{K}$ be a closed point specializing to $0\in\mathbb{A}^{d-1}_k$.
\begin{enumerate}[(i)]
\item $p^{-1}_K(y)$ is a smooth and geometrically connected curve over $y$.
\item $p^{-1}_K(y)\cap Y$ is \'etale over $y$.
\item There is an open neighbourhood of $y$ in $\mathbb{A}^{d-1}_K$ such that the restriction of $p$ to $Y'$ is finite \'etale over this neighbourhood.
\item There is an open neighbourhood of $y$ in $\mathbb{A}^{d-1}_K$ such that $p$ is smooth over this neighbourhood.
\end{enumerate}
\end{lemma}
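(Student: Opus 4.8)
The plan is to exploit the hypothesis that $y$ specializes to $0$, which forces the fibre $p^{-1}(y)$ to be the intersection of $\bar{X}$ with hyperplanes having the \emph{same specialization} as the good hyperplanes $H_1,\ldots,H_{d-1}$, so that the defining condition (6) of goodness applies verbatim. Write $K_y=k(y)$, let $V_y$ be the normalization of $V$ in $K_y$ and $T_y=\Spec(V_y)$. Since $\bar{X}'$ is cut out in $\bar{X}\times_T\mathbb{A}^{d-1}_T$ by the equations $F_i-Y_iF_0$, the fibre $p^{-1}(y)$ is the closed subscheme of $\bar{X}_{K_y}$ cut out by $F_i-y_iF_0=0$ ($1\le i\le d-1$), i.e. $p^{-1}(y)=\bar{X}_{K_y}\cap L^y$ with $L^y:=\bigcap_i V(F_i-y_iF_0)$. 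Because $y$ specializes to $0$, the $y_i$ lie in $\mathfrak{m}_{V_y}$, so each $H_i^y:=V(F_i-y_iF_0)$ reduces mod $\mathfrak{m}_{V_y}$ to $V(\bar{F}_i)=(H_i)_k$; by Nakayama the $H_i^y$ are non-degenerate, and they have the same specialization as $H_1,\ldots,H_{d-1}$. Hence by goodness (6) the hyperplanes $(H_0)_{T_y},H_1^y,\ldots,H_{d-1}^y$ satisfy conditions (2)--(5).

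Parts (i) and (ii) are then immediate. By (2), $\bar{X}_{T_y}\cap L^y$ is smooth over $T_y$ of relative dimension $1$, so its generic fibre $p^{-1}(y)$ is smooth of dimension $1$ over $K_y$; geometric connectedness follows from $\bar{X}_K$ being geometrically connected together with \cite[I, 7.1]{bertini}, exactly as in the remark following the definition of goodness. For (ii), $p^{-1}(y)\cap Y=Y_{K_y}\cap L^y$, which is \'etale over $K_y$ by condition (3).

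For (iv) I would show that $p_K$ is smooth at every $x\in p^{-1}(y)$ and conclude by openness of the smooth locus together with properness of $p$. Since the fibre is smooth by (i), smoothness at $x$ reduces to flatness of $p_K$ at $x$. If $F_0(x)\neq 0$ then $\bar{X}'_K\cong\bar{X}_K$ near $x$ by \ref{loc}(ii), so $\mathcal{O}_{\bar{X}'_K,x}$ is regular of dimension $d$. If $F_0(x)=0$ then $x\in\bar{X}_{K_y}\cap C$ (note $L^y\cap H_0=C$, since $F_0=0$ forces $F_i-y_iF_0=F_i$), and condition (4) makes $F_0,\ldots,F_{d-1}$ cut $\bar{X}_{K_y}$ transversally at $x$; consequently the $d-1$ differentials $d(F_i-y_iF_0)=dF_i-y_i\,dF_0$ are independent there, so $\bar{X}'_K=V(F_i-Y_iF_0)$ is again regular of dimension $d$ at $x$. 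In both cases miracle flatness (\cite[IV, 6.1.5]{ega}), with the regular base $\mathbb{A}^{d-1}_K$ and fibre dimension $1=d-(d-1)$, gives flatness, hence smoothness, at $x$. As $p$ is proper, the image of its non-smooth locus is closed and avoids $y$, and its complement is the required neighbourhood.

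For (iii), $Y'=\bar{X}'\setminus X'$ is closed in $\bar{X}'$, hence proper over $\mathbb{A}^{d-1}_T$. Its fibre over $y$ is $Y'_y=(Y_{K_y}\cap L^y)\cup(\bar{X}_{K_y}\cap C)$, which is finite and \'etale over $K_y$ by (3), (4) and (5). Properness plus finiteness of this fibre makes $p|_{Y'}$ quasi-finite, hence finite, over some neighbourhood $W$ of $y$ (\cite[III, 4.4.2]{ega}); the \'etale fibre over $y$ then makes it unramified over a smaller neighbourhood, again by properness and openness of the unramified locus. The remaining, and main, point is flatness of $p|_{Y'}$ near $y$, and here I would split $Y'$ along $Z=V(F_0)$: on $Z=(\bar{X}\cap C)\times_T\mathbb{A}^{d-1}_T$ (by \ref{loc}(iii)) the map $p$ is the base change of the finite \'etale morphism $\bar{X}\cap C\to T$ of (4), hence finite \'etale over all of $\mathbb{A}^{d-1}$; on the complementary part $Y'\setminus Z\cong Y\setminus H_0$ (by \ref{loc}(ii)) the map is the restriction to the divisor $Y$ of $p_K$, and flatness follows from miracle flatness once one knows $Y$ is Cohen--Macaulay of codimension $1$ (as in the good-neighbourhood setup) and that $Y_{K_y}\cap L^y$ is $0$-dimensional by (3). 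The crux, which I expect to be the main obstacle, is verifying that these two finite families neither collide nor escape to $Z$ as $y$ varies: this is precisely the role of condition (5), which keeps $\overline{Y_K}$ disjoint from $C$ on the special fibre and thereby guarantees that $Y'_y$ remains reduced of constant degree, so that finite $+$ unramified upgrades to finite \'etale over $W$.
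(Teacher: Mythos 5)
Parts (i), (ii) and (iv) of your argument are sound. For (i) and (ii) you follow essentially the paper's own route: lift $y$ to a $T_y$-point with coordinates in the maximal ideal, check that the perturbed hypersurfaces $V(F_i-y_iF_0)$ are non-degenerate with the same specialization as $H_1,\dots,H_{d-1}$, and invoke condition (6) of goodness so that (2)--(5) apply. Your proof of (iv) is genuinely different from the paper's: you verify flatness of $p_K$ at every point of $p_K^{-1}(y)$ by miracle flatness --- using \ref{loc}(ii) off $V(F_0)$ and the Jacobian computation coming from condition (4) at points of $C$, where indeed $d(F_i-Y_iF_0)=dF_i-y_i\,dF_0$ --- and then spread out by properness and openness of the smooth locus. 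The paper instead first proves that $p_K$ is dominant by a Berkovich-analytic density argument and then checks a criterion only at the generic point of the fibre; your route is more elementary and, as far as I can see, complete.

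The gap is in (iii), and it is not the obstacle you flag. Collision and escape of the two families is unproblematic: condition (5), applied to the perturbed hyperplanes and combined with properness of $D_{T_y}\cap L^y\cap H_0$ over $T_y$, gives $Y_{K_y}\cap L^y\cap H_0=\emptyset$, so the fibre is the disjoint union you describe and each piece is finite \'etale. The real problem is flatness of $p|_{Y'}$ at the points of $Y_{K_y}\cap L^y$. Your appeal to miracle flatness presupposes that $Y$ is Cohen--Macaulay of pure codimension $1$ in $\bar{X}$, but nothing in the setup grants this: $Y=\bar{X}\setminus X$ is the complement of an \emph{arbitrary} open subset of a smooth projective $T$-scheme, so it need not be Cohen--Macaulay, equidimensional, or a divisor. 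Your fallback --- that finite $+$ unramified upgrades to finite \'etale because the fibres ``remain reduced of constant degree'' --- is precisely the statement requiring proof (a closed immersion, or the normalization of a nodal curve, is finite and unramified with reduced fibres yet not flat), and condition (5) gives disjointness from $C$, not constancy of degree. The ingredient you are missing is the one the paper uses: the base $\mathbb{A}^{d-1}_K$ is \emph{normal}, so by \cite[IV, 18.10.3]{ega} a proper dominant morphism whose fibre over $y$ is \'etale is finite \'etale in a neighbourhood of $y$; normality (unibranchedness) of the base is what substitutes for the Cohen--Macaulayness of the source that miracle flatness would require.
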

\begin{proof}
(i) and (ii) : Let $V'$ be the normalization of $V$ in the residue field $K'$ at $y$, $T'=\Spec(V')$, and let $\pi'\in V'$ be a uniformizer. Since $\spe(y)=\spe(0)$, $y$ extends to a point $(y_1,...,y_{d-1})\in\mathbb{A}^{d-1}(T')$ with $y_i\equiv 0\mod\pi'$. So the fibre over $(y_1,...,y_{d-1})$ is the intersection of $\bar{X}$ with the hyperplanes $H_1',...,H_{d-1}'$ defined by the homogeneous polynomials $F_1':=F_1-y_1F_0,F_2':=F_2-y_2F_0,...,F_{d-1}':=F_{d-1}-y_{d-1}F_0$ respectively. We claim that $H_1',...,H_{d-1}'$ are non-degenerate. Since $F_i'\equiv F_i\mod\pi'$, it follows that $\wedge_{i=1}^{d-1}F_i'\equiv \wedge_{i=1}^{d-1}F_i\mod \pi'$, hence $\wedge_{i=1}^{d-1}F_i'$ forms part of a basis of $\wedge^{d-1}\mathcal{O}_{T'}^{n+1}$, i.e. the $F_i'$ are non-degenerate. Let $L':=\cap_iH_i'$. Since $H_0,...,H_{d-1}$ are good and $H_1',...,H_{d-1}'$ are non-degenerate hyperplanes with same specialization as $H_1,...,H_{d-1}$, (i) and (ii) hold by definition.

(iii) : By \ref{loc} (iii) we have $Z_K=(\bar{X}\cap C)_K\times_K\mathbb{A}^{d-1}_K$, and since $H_0,...,H_{d-1}$ are good we have $(\bar{X}\cap C)_K=(X\cap C)_K$ is finite \'etale over $K$. Thus $Z_K$ is finite \'etale over $\mathbb{A}^{d-1}_K$. Now, by (ii) $(Y_{T'}\cap L')_{K'}$ is finite \'etale over $K$. Also, since $H_0,H_1,...,H_{d-1}$ are good, for every irreducible component $E\subset Y$ dominating $T$ we have $(E\cap L'\cap H_0)_k=\emptyset$, so by properness we deduce that $E_{T'}\cap L'\cap H_0=\emptyset$. Hence $(Y_{T'}\cap L'\cap H_0)_{K'}=\emptyset$, and $Y'_{T'}\cap p^{-1}_K(y)$ is the disjoint union of $p^{-1}_K(y)\cap Z_K$ and $Y_{K'}\cap L'$, both of which are finite \'etale over $k(y)$ by the above. So the morphism $Y'_K\to\mathbb{A}^{d-1}_K$ is dominant, proper, and its fibre over $y$ is \'etale. Since $\mathbb{A}^{d-1}_K$ is normal it follows from \cite[IV, 18.10.3]{ega} that this morphism is finite \'etale in a neighbourhood of $y$.

(iv): We claim that $p_K$ is dominant. Since $p_K$ is proper, $S:=p_K(\bar{X}'_K)\subset\mathbb{A}^d_K$ is closed, so is equal to $\mathbb{A}^d_K$ if and only if $\dim S=d$. Consider the $K$-analytic space $S^{\an}$ associated to $S$ by Berkovich \cite[3.4]{berkovich}. It is a closed analytic subspace of $(\mathbb{A}^d_K)^{\an}$. Now, to show that $\dim S=d$ it suffices to show that $W\subset S^{\an}$ for some analytic open $W\subset(\mathbb{A}^d_K)^{\an}$ (for then at any point $y\in W$ whose residue field is finite over $K$, the Krull dimension of $\mathcal{O}_{W,y}$ is $d$ and $\widehat{\mathcal{O}}_{W,y}=\widehat{\mathcal{O}}_{S,y}$ \cite[3.4.1]{berkovich}). Let $B$ be the closed unit ball around the origin in $(\mathbb{A}^d_K)^{\an}$, i.e. $B=\mathcal{M}(K\{Y_1,...,Y_d\})$ in the notation of \cite{berkovich}. The reduction map $r:B\to\mathbb{A}^d_k$ is anti-continuous and surjective (\cite[2.4]{berkovich}) and one sees easily that $\mathring{B}:=r^{-1}(0)$ is the open unit ball around the origin in $(\mathbb{A}^d_K)^{\an}$. By the above we know that $S$ contains the set $\spe^{-1}_{\mathbb{A}^d_T}(0)$ ($0\in\mathbb{A}^{d}_k$). Hence, it follows from the definitions that $S^{\an}$ contains the set $\Sigma$ of points of $\mathring{B}$ whose residue field is finite over $K$. It follows from \cite[2.1.15]{berkovich} that $\Sigma$ is everywhere dense in $\mathring{B}$. Thus, $S^{\an}\cap\mathring{B}$ is a closed everywhere dense subset of $\mathring{B}$, whence $\mathring{B}\subset S^{\an}$, as was to be shown.

Now, up to making a finite extension we may assume $y\in\mathbb{A}^{d-1}(K)$. By \cite[Exp. II, 2.6]{sga1} it suffices to see that in the generic point $\eta\in p^{-1}_K(y)$ we have $\dim(\mathcal{O}_{\bar{X}',\eta})=d-1$ and that the maximal ideal of  $\mathcal{O}_{\bar{X}',\eta}$ is generated by that of $\mathcal{O}_{\mathbb{A}^{d-1},y}$. Now, we claim that $X'_K\cap L'$ is dense in $\bar{X}'_K\cap L'$. First note that $X_K\cap L'$ is dense in $\bar{X}_K\cap L'$ since the latter is irreducible and $Y_K\cap L'$ is finite. So $\dim(X_{K}\cap L')=1$ and since $(X\cap C)_{K}$ is finite it follows that $X'_{K}\cap L'=X_{K}\cap L'-(X\cap C)_{K}\neq\emptyset$, so $\dim(X'_{K}\cap L')=1$. Thus, $X'_{K}\cap L'=\bar{X}'_K\cap L'-Y'_{K}\cap L'$ is dense in $\bar{X}'_{K}\cap L'$ and we have $\mathcal{O}_{\bar{X}',\eta}=\mathcal{O}_{X',\eta}$. Since $X'_{K}$ is regular and its intersection with the hyperplanes $\left\{y_i=F_i/F_0\right\}_{1\leq i\leq d-1}$ is a smooth curve, it follows that $F_1/F_0-y_1,...,F_{d-1}/F_0-y_{d-1}$ generate the maximal ideal of $\mathcal{O}_{X',\eta}$. Finally, we have $F_i/F_0-y_i=Y_i-y_i$ so that $Y_1-y_1,...,Y_{d-1}-y_{d-1}$ do indeed generate the maximal ideal of $\mathcal{O}_{X',\eta}$.
\end{proof}

By the last lemma there exists an open $U_0\subset\mathbb{A}^{d-1}_{K}$, containing the set $\spe^{-1}_{\mathbb{A}^{d-1}_T}(0)$ ($0\in\mathbb{A}^{d-1}_k$), such that $p$ is smooth with geometrically connected fibres over $U_0$ and its restriction to $Y'$ is finite \'etale. It follows from \ref{sp} that $U_0=U_{K}$ for some open neighbourhood $U$ of $0\in\mathbb{A}^{d-1}_k$. So $p|_{p^{-1}(U_K)}$ is an elementary fibration (in the sense of \cite[Exp. XI]{sga4.3}) and by induction on the dimension of $X$ we deduce

\begin{theorem}\label{elementaryfib}
If $X$ is an open in a smooth projective $T$-scheme and $P\in X_k$ a closed point, then there is an open neighbourhood $X''$ of $P$ in $X$ such that $X''_{K}$ is a good neighbourhood relative to $\Spec(K)$ in the sense of \cite[Exp. XI]{sga4.3}.
\end{theorem}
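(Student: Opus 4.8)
The plan is to argue by induction on the relative dimension $d$ of $\bar{X}$ over $T$. For the base case $d=0$ the morphism $\bar{X}\to T$ is finite \'etale, and since $V$ is henselian with algebraically closed residue field the connected component of $X$ through $P$ is isomorphic to $\Spec V$; its generic fibre is $\Spec K$, which is a good neighbourhood of dimension $0$, and I take $X''$ to be this component. The inductive step is where the preceding construction does the work: by the paragraph preceding the theorem there is an open $U\subset\mathbb{A}^{d-1}_k$ with $0\in U$ such that, after identifying $X'$ with $q^{-1}(X')$, the restriction
\[ p\colon X'\cap p^{-1}(U_K)\longrightarrow U_K \]
is an elementary fibration. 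Since $H_1,\dots,H_{d-1}$ pass through $P$ while $H_0$ does not, the coordinates $Y_i=F_i/F_0$ vanish at $P$, so $p(P)=0$ is the marked point of the base.

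Next I would regard $U$ as an open subscheme of the smooth projective $T$-scheme $\mathbb{P}^{d-1}_T\supset\mathbb{A}^{d-1}_T$, equipped with the closed point $0\in U_k$. The inductive hypothesis then furnishes an open neighbourhood $U''$ of $0$ in $U$ such that $U''_K$ is a good neighbourhood of dimension $d-1$. Restricting the elementary fibration above over the open $U''_K\subset U_K$ yields an elementary fibration
\[ X'\cap p^{-1}(U''_K)\longrightarrow U''_K, \]
and since its base $U''_K$ is a good neighbourhood, so is the total space $X'\cap p^{-1}(U''_K)$: its defining tower of elementary fibrations is that of $U''_K$ with one further fibration on top.

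It remains to exhibit this open of $X'_K$ as the generic fibre of an open neighbourhood of $P$. For this I invoke \ref{sp} with the point $P\in X'_k$, for which it suffices to check $\spe^{-1}_{X'}(P)\subset p^{-1}(U''_K)$. Since $U''$ is an open neighbourhood of $0$, \ref{sp} applied on $\mathbb{A}^{d-1}_T$ gives $\spe^{-1}_{\mathbb{A}^{d-1}}(0)\subset U''_K$; moreover $p$ sends a closed point of $X'_K$ specializing to $P$ to a closed point of $\mathbb{A}^{d-1}_K$ specializing to $p(P)=0$ (continuity of $p$ gives the specialization, and finiteness of the residue field over $K$ gives closedness), so $p\bigl(\spe^{-1}_{X'}(P)\bigr)\subset\spe^{-1}_{\mathbb{A}^{d-1}}(0)\subset U''_K$. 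Hence $\spe^{-1}_{X'}(P)\subset X'\cap p^{-1}(U''_K)$, and by \ref{sp} this open set equals $X''_K$ for some open neighbourhood $X''$ of $P$ in $X'$, hence in $X$. Then $X''_K$ is precisely the good neighbourhood just constructed, which completes the induction.

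The main obstacle is this final descent step. The genuinely new content beyond the lemmas already proved lies in verifying the specialization containment $\spe^{-1}_{X'}(P)\subset p^{-1}(U''_K)$ so that \ref{sp} converts the good neighbourhood $X'\cap p^{-1}(U''_K)\subset X'_K$ into the generic fibre of an honest Zariski neighbourhood of $P$; this rests on the compatibility of $\spe$ with the projection $p$ and on the fact that $p(P)=0$ lies in the base of the fibration. Everything else, in particular the elementary fibration structure and the uniform behaviour over the whole specialization fibre of $0$, is supplied by \ref{spread} and the genericity encoded in the goodness of the hyperplanes.
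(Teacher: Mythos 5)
Your proof is correct and follows essentially the same route as the paper: the same induction on $d$, applied to the base $U\subset\mathbb{A}^{d-1}_T$ of the elementary fibration constructed in the preceding paragraphs, with $X''=X'\cap p^{-1}(U'')$. The only difference is that your final descent step via \ref{sp} is superfluous: the inductive hypothesis already produces $U''$ as a Zariski open of the $T$-scheme $U$ (not merely an open of $U_K$), so $X'\cap p_K^{-1}(U''_K)$ is manifestly the generic fibre of the honest open neighbourhood $X'\cap p^{-1}(U'')$ of $P$, and no specialization containment needs to be checked.
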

\begin{proof}
We have already established that $p_K^{-1}(U_K)\to U_K$ is an elementary fibration. By induction on $\dim X_K$, there is an open neighbourhood $U'\subset U$ of $0\in\mathbb{A}^{d-1}_k$, such that $U'_K$ is a good neighbourhood relative to $\Spec(K)$ in the sense of \cite[Exp. XI]{sga4.3}. So we can take $X''=X'\cap p^{-1}(U')$, where $X'$ is defined as above.
\end{proof}

\subsection{$K(\Pi,1)$ neighbourhoods}\label{kpi1section}
Let $X$ be a scheme and let $X_{\Fet}$ be the finite-\'etale site of $X$, i.e. $X_{\Fet}$ is the subsite of the \'etale site $X_{\et}$ whose objects are the finite \'etale $X$-schemes. Recall that $X$ is a $K(\Pi,1)$ (for the \'etale topology) if, for each integer $n$ invertible on $X$ and any sheaf $\mathbb{L}$ of $\mathbb{Z}/n$-modules on $X_{\Fet}$, the adjunction map $\mathbb{L}\to R\rho_*\rho^*\mathbb{L}$ is a quasi-isomorphism, where $\rho:X_{\et}\to X_{\Fet}$ is the canonical morphism of sites. Intuitively, for $X$ connected this means that its \'etale cohomology with values in such a sheaf $\mathbb{L}$ can be computed as the Galois cohomology of the \'etale fundamental group of $X$ with values in the stalk of $\mathbb{L}$ at the choice of base point.

Now, since $\cha(K)=0$, a good neighbourhood of a $K$-variety is a $K(\Pi,1)$ (\cite[Exp. XI, 4.6]{sga4.3}; for an algebraic proof using Abhyankar's lemma see \cite[5.5]{olsson}), so \ref{elementaryfib} implies

\begin{corollary}\label{corelementaryfib}
If $X$ is an open in a smooth projective $T$-scheme, any closed point $P\in X_k$ has an open neighbourhood $U$ such that $U_K$ is a $K(\Pi,1)$.
\end{corollary}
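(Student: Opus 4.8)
The plan is to obtain this directly from Theorem \ref{elementaryfib} combined with the characterization of good neighbourhoods recalled in the paragraph preceding the statement. First I would apply Theorem \ref{elementaryfib} to the given open $X$ and the closed point $P\in X_k$; this produces an open neighbourhood $X''$ of $P$ in $X$ such that $X''_K$ is a good neighbourhood relative to $\Spec(K)$ in the sense of \cite[Exp. XI]{sga4.3}. I would then simply set $U:=X''$, so that the only remaining task is to identify $U_K=X''_K$ as a $K(\Pi,1)$.

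The second and final step is to invoke the fact, available precisely because $\cha(K)=0$, that a good neighbourhood of a $K$-variety is a $K(\Pi,1)$ (\cite[Exp. XI, 4.6]{sga4.3}, or \cite[5.5]{olsson} for the algebraic argument via Abhyankar's lemma). Since $U_K$ is such a good neighbourhood, this yields at once that $U_K$ is a $K(\Pi,1)$, which is exactly the assertion.

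There is no real obstacle here, as all of the geometric work has already been carried out: the substantive content lies in Theorem \ref{elementaryfib}, whose proof rests on the elementary fibration $p|_{p^{-1}(U_K)}$ constructed in \S\ref{goodnbhdsetup}, while the cohomological content is supplied entirely by the cited $K(\Pi,1)$-property of good neighbourhoods in characteristic zero. The only point deserving a word of care is that the notion of good neighbourhood produced by Theorem \ref{elementaryfib} must be the same as the one to which the cited result applies; since both are taken in the sense of \cite[Exp. XI]{sga4.3}, this compatibility is immediate, and the corollary follows formally.
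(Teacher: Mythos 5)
Your proposal is correct and coincides with the paper's own argument: the paper likewise derives the corollary by combining Theorem \ref{elementaryfib} with the fact that, since $\cha(K)=0$, a good neighbourhood of a $K$-variety is a $K(\Pi,1)$ (\cite[Exp. XI, 4.6]{sga4.3}, or \cite[5.5]{olsson}). Nothing further is needed.
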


\section{Construction of $K(\Pi,1)$ neighbourhoods}
We maintain the assumption $\cha(K)=0$.

\subsection{Setup}\label{setupkpi1}
Let $X$ be a connected affine smooth $T$-scheme of relative dimension $d$. Fix a closed immersion $X\hookrightarrow\mathbb{A}^n_T$ and let $\bar{X}$ be the closure of $X$ in $\mathbb{P}^n_T$, $Y:=\bar{X}-X$. Fix a closed point $P\in X_k$.

\subsection{Nice hyperplanes} Let $T'$ be a finite extension of $T$ as in \ref{goodnbhdsetup}. Let $H_0,H_1,...,H_{d}$ be $T'$-hyperplanes and $L:=H_1\cap H_2\cap\cdots\cap H_{d}$. Consider the following conditions:
\begin{enumerate}
\item $H_1,H_2,..,H_{d}$ are non-degenerate and pass through $P$, and $H_0$ does not pass through $P$
\item if $D$ is the scheme theoretic closure of $Y_K$ in $Y$, then $(D_{T'}\cap L)_k=\emptyset$
\item $(\bar{X}_{T'}\cap L)_k$ is finite over $k$ and $(\bar{X}_{T'}\cap L)_K$ is \'etale over $K$ and non-empty
\item $(\bar{X}_{T'}\cap L\cap H_0)_k=\emptyset$.
\suspend{enumerate}
We say that $T$-hyperplanes $H_0,H_1,...,H_{d-1}$ are \emph{nice} if they satisfy (1)-(4) and moreover
\resume{enumerate}
\item if, for any finite extension $T'\to T$ as above, $H_1',...,H'_{d}$ are $T'$-hyperplanes with same specialization (as a point of the suitable grassmannian) as $H_1,...,H_{d}$, then conditions (2)-(4) hold for the hyperplanes $(H_0)_{T'},H_1',...,H_d'$.
\end{enumerate}

\begin{proposition}
Nice $T$-hyperplanes exist, up to replacing the embedding $\bar{X}\hookrightarrow\mathbb{P}^n_T$ by its double.
\end{proposition}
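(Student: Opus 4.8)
The plan is to imitate the proof of Proposition \ref{good}, replacing its curve–cutting inputs (\ref{smoothcase}, \ref{awayfromP}) by the étale–cutting Theorem \ref{maingoodnhbprop}, since we now impose $d$ hyperplanes through $P$ and so cut $\bar X$ all the way down to relative dimension $0$ rather than to curves. Throughout I work with the double of the given embedding, i.e. in $\mathbb{P}(\Gamma(\mathcal{O}_{\mathbb{P}^n_T}(2)))$ with $G:=G_{\mathbb{P}(\Gamma(\mathcal{O}_{\mathbb{P}^n_T}(2))),d}$ and the incidence locus $\mathcal{H}_P\subset G_k$, which by \ref{zariskiopen}~(i) is precisely the space of hyperplane configurations through $P$. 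First I would record that $\bar X$ is an integral, flat, projective $T$-scheme of dimension $d+1$ whose generic fibre $\bar X_K$ is geometrically reduced (being the closure of the smooth $X_K$) and which is smooth at $P\in X_k$. Hence Theorem \ref{maingoodnhbprop}~(ii) applies and yields a dense open $U_1\subset\mathcal{H}_P$ such that for every closed $L\in G_K$ with $\spe_G(L)\in U_1$ the set $\bar X_k\cap\spe_G(L)$ is finite over $k$, the scheme $\bar X_{K_L}\cap L$ is étale over $K_L$, and $\bar X_{V_L}\cap L\neq\emptyset$. This is exactly condition (3) together with the final non-emptiness assertion.

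Next I would secure condition (2). Let $D$ be the scheme-theoretic closure of $Y_K$ in $Y$; it is flat over $T$, it does not meet $P$ (since $D\subset Y=\bar X-X$ while $P\in X$), and by \cite[IV, 14.2.5]{ega} it satisfies $\dim D_k=\dim D_K\leq d-1$. As $P\notin D_k$, the variety $D_k$ embeds into $\mathbb{P}(\Gamma(I_P(2)\cdot\mathcal{O}_{\mathbb{P}^n_k}))$ via \ref{diagram}, and the standard incidence–dimension count (as in \cite[I, 6.3]{bertini}) shows that the points $l\in\mathcal{H}_P$ for which the associated linear section meets $D_k$ form a closed subset of dimension strictly less than $\dim\mathcal{H}_P$. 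Thus there is a dense open $U_2\subset\mathcal{H}_P$ on whose points $\spe_G(L)\cap D_k=\emptyset$, whence $(D_{T'}\cap L)_k=\emptyset$. Setting $U:=U_1\cap U_2$, I would choose $z\in U(k)$ and, using that $G$ is smooth over the complete (hence Henselian) base $T$, lift $z$ to $\tilde z\in G(T)$; its generic fibre defines $T$-hyperplanes $H_1,\dots,H_d$ through $P$, with $L:=H_1\cap\cdots\cap H_d$ and $\spe_G(L)=z\in U$. This gives conditions (2) and (3).

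Then I would pick $H_0$. The set $\bar X_k\cap\spe_G(L)$ is finite and contains $P$ (because $L$ passes through $P$), so a generic $k$-hyperplane avoids it; lifting its coefficients to $T$ produces a $T$-hyperplane $H_0$ with $P\notin H_0$ and $(\bar X_{T'}\cap L\cap H_0)_k=\bar X_k\cap\spe_G(L)\cap (H_0)_k=\emptyset$, which is conditions (1) and (4). Finally, for condition (5) I would observe that conditions (2) and (4) depend only on $\spe_G(L)$ and on the special fibre $(H_0)_k$, hence hold verbatim for any non-degenerate $H_1',\dots,H_d'$ sharing the specialization of $H_1,\dots,H_d$, while condition (3) for such $L'$ is exactly what \ref{maingoodnhbprop}~(ii) asserts, since that theorem is quantified over \emph{all} closed $L'\in G_K$ with $\spe_G(L')\in U_1$. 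This reproduces the passage from (1)–(5) to (6) in \ref{good} and completes the argument.

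The substantive content has already been packaged into Theorem \ref{maingoodnhbprop}, so the present statement is essentially a bookkeeping exercise; the one point requiring care is the self-consistency of condition (5). I must make sure that the étaleness in (3) is guaranteed by the theorem for \emph{every} perturbation $L'$ with the same specialization, not merely for the single chosen $L$ — which is why it is essential that the dense open $U_1$ be produced in terms of the specialization locus $\mathcal{H}_P$ and that \ref{maingoodnhbprop}~(ii) be stated uniformly in $L'$ over $\{\spe_G(L')\in U_1\}$. The remaining conditions are stable for the trivial reason that they are conditions on special fibres alone.
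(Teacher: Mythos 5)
Your proposal is correct and follows essentially the same route as the paper's proof: condition (2) via the dimension count $\dim D_k\leq d-1$ and $P\notin D$, condition (3) via Theorem \ref{maingoodnhbprop}~(ii) applied over a dense open of $\mathcal{H}_P$, lifting a closed point of the intersection to $G(T)$, choosing $H_0$ to avoid the finite set $\bar{X}_k\cap\spe_G(L)$, and deducing (5) from the uniformity of the dense opens over the specialization locus. The only differences are cosmetic (swapped labels $U_1$, $U_2$ and some extra justification of the dimension bound on $D_k$).
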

\begin{proof}
Let $G:=G_{\mathbb{P}(\Gamma(\mathcal{O}_{\mathbb{P}^n_T}(2))),d}$ (\S\ref{setupgrass}).

For (2), note that $\bar{X}$ is integral (since $X$ is), hence flat over $T$ and of dimension $d+1$. Since the space underlying $Y$ is equal to the intersection of $\bar{X}$ with the hyperplane at infinity, it follows that $\dim Y=d$. So $\dim D_k\leq d-1$. As $P\notin D$, there is a dense open $U_1\subset \mathcal{H}_{P}$ such that $D_k\cap l=\emptyset$ for all $l\in U_1$.

By \ref{maingoodnhbprop} (ii), there is a dense open $U_2\subset \mathcal{H}_P$ such that condition (3) holds for all closed $L\in G_K$ with $\spe(L)\in U_2$.

So we can pick a closed point in $U_1\cap U_2$, and it can be lifted to an element of $G(T)$. This point gives the hyperplanes $H_1,...,H_d$ which satisfy conditions (2) and (3).

In view of (3), we can clearly pick $H_0$ such that condition (4) is satisfied. The hyperplanes $H_0,...,H_{d-1}$ satisfy (1)-(4).

Finally, condition (5) holds for (3) by choice of $U_2$ and as (2) and (4) depend only on $\spe(L)$, (5) holds automatically for them.
\end{proof}

\subsection{The map to affine space} Let $H_0,H_1,...,H_{d}$ be nice $T$-hyperplanes. Let $F_i$ be a homogeneous equation of $H_i$ for $i=0,1,...,d$. Let 
\[ \bar{X}'=V(F_1-Y_1F_0,F_2-Y_2F_0,...,F_{d}-Y_{d}F_0) \]
be the closed subscheme of $\bar{X}\times_T\mathbb{A}^{d}_T$ defined by the homogeneous polynomials $F_1-Y_1F_0,F_2-Y_2F_0,...,F_{d}-Y_{d}F_0$, where $Y_1,...,Y_{d}$ are the coordinates on $\mathbb{A}^{d}$. Let $p:\bar{X}'\to\mathbb{A}^{d}_T$, and $q:\bar{X}'\to\bar{X}$ be the natural projections.

Define $X':=\bar{X}-(Y\cup (H_0\cap\bar{X}))$. Note that $P\in X'$. By the same argument as \ref{loc} (ii), $q$ induces an isomorphism $q^{-1}(X')\cong X'$ which we use to identify $X'$ and $q^{-1}(X')$. In fact, since $H_0,...,H_d$ are nice we have $\bar{X}\cap L\cap H_0=\emptyset$ so $V(F_0)\cap\bar{X}'=\emptyset$ and the map $q$ is an open immersion.

\begin{theorem}\label{Kpi1}
There is an open neighbourhood $U\subset\mathbb{A}^d_T$ of $0\in\mathbb{A}^{d}_k$ such that the map $X'_{U_K}\to U_K$ induced by $p$ is finite \'etale with non-empty fibres.
\end{theorem}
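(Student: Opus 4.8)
The plan is to identify the open locus $U_0\subset\mathbb{A}^d_K$ over which $p$ restricts to a finite \'etale map on $X'$, to show that $U_0$ contains $\spe^{-1}_{\mathbb{A}^d_T}(0)$, and then to descend to an open neighbourhood $U$ of $0\in\mathbb{A}^d_k$ by means of \ref{sp}, exactly as in the passage preceding \ref{elementaryfib}. Recall that, $H_0,\dots,H_d$ being nice, $q$ identifies $\bar{X}'$ with the open subscheme $\bar{X}-(\bar{X}\cap H_0)$ of $\bar{X}$, so that under this identification $p$ carries a point $z$ to $(F_1(z)/F_0(z),\dots,F_d(z)/F_0(z))$, while $X'$ is the open subscheme $\bar{X}-(Y\cup(\bar{X}\cap H_0))$.

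First I would compute the fibre of $p$ over an arbitrary closed point $y\in\mathbb{A}^d_K$ with $\spe(y)=0$. Let $K'$ be the residue field of $y$, $V'$ the normalisation of $V$ in $K'$, $T'=\Spec(V')$, and $\pi'$ a uniformiser. Since $\spe(y)=\spe(0)$, the point $y$ extends to $(y_1,\dots,y_d)\in\mathbb{A}^d(T')$ with $y_i\equiv0\pmod{\pi'}$, and the hyperplanes $H_i':=V(F_i-y_iF_0)$ then reduce to $H_i$ modulo $\pi'$; as in \ref{spread}, this forces $H_1',\dots,H_d'$ to be non-degenerate with the same specialisation as $H_1,\dots,H_d$. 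By niceness (conditions (2)--(5)) the scheme $L':=\bigcap_iH_i'$ satisfies $(D_{T'}\cap L')_k=\emptyset$, $(\bar{X}_{T'}\cap L')_K$ \'etale and non-empty, and $(\bar{X}_{T'}\cap L'\cap H_0)_k=\emptyset$. As $D_{T'}\cap L'$ and $\bar{X}_{T'}\cap L'\cap H_0$ are proper over $T'$ with empty special fibre, both are empty; since $Y_{K'}=D_{K'}$, this gives $Y_{K'}\cap L'=\emptyset$ and $\bar{X}_{K'}\cap L'\cap H_0=\emptyset$. Hence the fibre $p^{-1}_K(y)=\bar{X}_{K'}\cap L'$ is finite \'etale and non-empty over $K'$, and it lies entirely inside $X'$, since the boundary $Y'=\bar{X}'-X'$ corresponds to $(Y-Y\cap H_0)\cap L'\subset Y_{K'}\cap L'=\emptyset$ over $y$.

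Next I would spread this out. The morphism $p_K:\bar{X}'_K\to\mathbb{A}^d_K$ is proper, and $p_K|_{Y'_K}$ is proper with closed image not containing $y$; so over some neighbourhood of $y$ we have $\bar{X}'_K=X'_K$, and there $p_K$ is proper with finite \'etale non-empty fibre over $y$. Since $\mathbb{A}^d_K$ is normal, \cite[IV, 18.10.3]{ega} produces an open $W^y\ni y$ over which $p:X'_{W^y}\to W^y$ is finite \'etale. Let $U_0$ be the union of all opens of $\mathbb{A}^d_K$ over which $p|_{X'}$ is finite \'etale; then $U_0$ is open, $p:X'_{U_0}\to U_0$ is finite \'etale, and by the previous step $U_0$ contains every closed point $y$ with $\spe(y)=0$. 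By \ref{gen} and \ref{closed} these are exactly the points of $\spe^{-1}_{\mathbb{A}^d_T}(0)$, so \ref{sp} gives $U_0=U_K$ for an open neighbourhood $U$ of $0\in\mathbb{A}^d_k$. Finally $\mathbb{A}^d_K$, hence $U_0$, is irreducible, so the finite \'etale map $X'_{U_K}\to U_K$ has constant degree, which is positive because the fibre over any $y\in\spe^{-1}(0)$ is non-empty; therefore all fibres are non-empty.

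The main obstacle is the fibrewise computation: one must extract from the niceness conditions, via the remark that a proper $T'$-scheme with empty special fibre is empty, both that the fibre $\bar{X}_{K'}\cap L'$ is finite \'etale and non-empty and that it is disjoint from the boundary $Y'$. Once this is secured, the spreading-out through \cite[IV, 18.10.3]{ega} and the descent to $\mathbb{A}^d_T$ through \ref{sp} proceed along the template already set by \ref{spread} and \ref{elementaryfib}.
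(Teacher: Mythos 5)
Your proposal follows the paper's proof essentially step for step: compute the fibre over each closed point $y$ specializing to $0$ by extending $y$ to a $T'$-point and observing that the perturbed hyperplanes $H_i'=V(F_i-y_iF_0)$ are non-degenerate with the same specialization as the $H_i$, so that niceness gives a finite \'etale non-empty fibre disjoint from the boundary $Y'$; then discard $Y'$ over a neighbourhood by properness, spread out with \cite[IV, 18.10.3]{ega}, and descend to an open of $\mathbb{A}^d_T$ via \ref{sp}. The one point you elide is that the cited EGA statement, as the paper uses it here and in \ref{spread}(iii), also requires $p_K$ to be dominant: without some such hypothesis, ``proper onto a normal base with finite \'etale non-empty fibre over $y$'' does not imply \'etale near $y$ (adding a disjoint closed point over $y$ gives a counterexample). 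This is easily repaired --- the paper checks dominance via the Berkovich argument of \ref{spread}(iv), and in your setup it also follows because $\bar{X}'_K$ is irreducible of dimension $d$ with a non-empty finite fibre over $y$, so Chevalley's fibre-dimension inequality forces its image to be all of $\mathbb{A}^d_K$.
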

\begin{proof}
Let $y\in\mathbb{A}^d_K$ be a closed point specializing to $0\in\mathbb{A}^{d}_k$. As in the proof of of \ref{spread} (i), the fibre $p^{-1}_K(y)$ is the intersection of $\bar{X}_{K(y)}$ with non-degenerate hyperplanes $H_1',...,H_d'$ having the same specialization as $H_1,...,H_d$ (here $K(y)$ is the residue field at $y$). Since $H_1,...,H_d$ are nice, $p^{-1}_K(y)$ is finite \'etale and non-empty by definition.

Note that $p_K$ is dominant: this can be seen as in the proof of \ref{spread} (iv). Now since the morphism $p_K$ is proper, dominant, and $\mathbb{A}^d_K$ is normal, it follows from \cite[IV, 18.10.3]{ega} that there is an open $U'\subset\mathbb{A}^d_K$ containing all points specializing to $0\in\mathbb{A}^{d}_k$ such that $p_K^{-1}(U')\to U'$ is finite \'etale.

Moreover, since $H_0,...,H_d$ are nice we have $Y_{K(y)}\cap p_K^{-1}(y)=\emptyset$ for all $y\in\mathbb{A}^d_K$ with $\spe(y)=0\in\mathbb{A}^d_k$. So if $Y':=\bar{X}'\setminus X'$, then $Y'_{K(y)}\cap p_K^{-1}(y)=\emptyset$. Since $Y'_K$ is proper over $\mathbb{A}^d_K$, up to localizing over $U'$ we may assume that $\bar{X}'_{U'}=X'_{U'}$.

Finally, since $U'$ contains the set $\spe^{-1}_{\mathbb{A}^d_T}(0)$ ($0\in\mathbb{A}^{d}_k$), by \ref{sp} it follows that $U'=U_K$ for some open $U\subset\mathbb{A}^d_T$ containing $0\in\mathbb{A}^{d}_k$, as required.
\end{proof}

\begin{corollary}[Faltings \cite{fa2}]\label{corKpi1}
There is an open neighbourhood $X''$ of $P$ in $X$ such that $X''_K$ is a $K(\Pi,1)$.
\end{corollary}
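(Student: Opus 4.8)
The plan is to deduce the corollary by assembling Theorem \ref{Kpi1}, the result \ref{corelementaryfib} on opens of smooth projective $T$-schemes, and the stability of the $K(\Pi,1)$ property under finite \'etale covers. By \ref{Kpi1} there is an open neighbourhood $U\subset\mathbb{A}^d_T$ of $0\in\mathbb{A}^d_k$ such that the projection $p$ restricts to a finite \'etale morphism $X'_{U_K}\to U_K$ with non-empty fibres. First I would check that $P$ lies over $0$: since $P\in X'$ with $F_i(P)=0$ for $1\le i\le d$ and $F_0(P)\neq 0$, the coordinates $Y_i=F_i/F_0$ all vanish at $P$, so $p(P)=0\in U$ and $P\in X'\cap p^{-1}(U)$. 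The candidate neighbourhood will be $X'':=X'\cap p^{-1}(U')$ for a suitable $U'\subseteq U$, and then $X''_K=X'_{U'_K}$ will be finite \'etale over $U'_K$.

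The next step is to arrange that the base $U'_K$ is itself a $K(\Pi,1)$. Here I use that $U$ is an open subscheme of $\mathbb{A}^d_T\subset\mathbb{P}^d_T$, hence an open in a smooth projective $T$-scheme. Applying \ref{corelementaryfib} to $U$ and the closed point $0\in U_k$ yields an open neighbourhood $U'\subset U$ of $0$ such that $U'_K$ is a $K(\Pi,1)$. Restricting the finite \'etale morphism of \ref{Kpi1} over $U'_K$ gives a finite \'etale morphism $X'_{U'_K}\to U'_K$ whose target is a $K(\Pi,1)$.

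The crux, and the one genuinely nontrivial input, is the assertion that a finite \'etale cover of a $K(\Pi,1)$ is again a $K(\Pi,1)$; I would prove it directly from the definition in \ref{kpi1section}. Given $f\colon W\to S$ finite \'etale with $S$ a $K(\Pi,1)$, note that a finite \'etale $W$-scheme is finite \'etale over $S$, so composition induces $f_{\Fet,*}\colon \mathrm{Sh}(W_{\Fet})\to\mathrm{Sh}(S_{\Fet})$ compatible with the \'etale pushforward and with $\rho$. Because $f$ is finite, $f_{\et,*}$ and $f_{\Fet,*}$ are exact and commute with the localization morphisms $\rho$, giving for a sheaf $\mathbb{L}$ of $\mathbb{Z}/n$-modules on $W_{\Fet}$ a natural isomorphism $f_{\Fet,*}R\rho_{W,*}\rho_W^*\mathbb{L}\cong R\rho_{S,*}\rho_S^*(f_{\Fet,*}\mathbb{L})$, which is just $f_{\Fet,*}\mathbb{L}$ since $S$ is a $K(\Pi,1)$. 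Thus $f_{\Fet,*}$ carries the adjunction map $\mathbb{L}\to R\rho_{W,*}\rho_W^*\mathbb{L}$ to a quasi-isomorphism. Finally $f_{\Fet,*}$ is conservative: it is exact, and if $\mathbb{L}\ne 0$ then some finite \'etale $W''\to W$ has $\mathbb{L}(W'')\ne 0$, and as $W''$ is an open-and-closed $W$-subscheme of $W\times_S W''$ (via the graph), one gets $(f_{\Fet,*}\mathbb{L})(W'')=\mathbb{L}(W\times_S W'')\ne 0$. Hence the adjunction map for $\mathbb{L}$ is itself a quasi-isomorphism, so $W$ is a $K(\Pi,1)$.

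Applying this to $f=(X'_{U'_K}\to U'_K)$ shows that $X''_K=X'_{U'_K}$ is a $K(\Pi,1)$, and $X''=X'\cap p^{-1}(U')$ is the required open neighbourhood of $P$ in $X$. I expect the main obstacle to be the careful verification of the compatibility isomorphism and of the conservativity of $f_{\Fet,*}$; everything else is a direct combination of \ref{Kpi1} and \ref{corelementaryfib}.
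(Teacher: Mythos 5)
Your proposal is correct and follows the same route as the paper: apply \ref{Kpi1} to get a finite \'etale map $X'_{U_K}\to U_K$, shrink $U$ via \ref{corelementaryfib} so the base becomes a $K(\Pi,1)$, and invoke stability of the $K(\Pi,1)$ property under finite \'etale covers. The only difference is that you supply a detailed (and correct) proof of that stability fact, which the paper simply asserts.
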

\begin{proof}
According to \ref{Kpi1} there is an open neighbourhood $X''$ of $P$ and a morphism $f:X''\to U$ with $U$ open in $\mathbb{A}^d_T$ such that $X''_K\to U_K$ is a finite \'etale covering. By \ref{corelementaryfib}, up to shrinking $U$ around $f(P)$ we may assume that $U_K$ is a $K(\Pi,1)$. Since a finite \'etale covering of a $K(\Pi,1)$ is also a $K(\Pi,1)$ this implies the result.
\end{proof}

\subsection{Application to $p$-adic nearby cycles}\label{padic}
Let $X$ be a smooth $T$-scheme. Denote by $\bar{K}$ an algebraic closure of $K$. Recall that Faltings \cite{fa4} has defined a site whose objects are pairs $(U,W)$, where $U\to X$ is \'etale and $W\to U_{\bar{K}}$ is finite \'etale. It has been noticed that the topology on this category given in loc. cit. is not quite correct. This issue has been studied in detail in the work of Abbes and Gros \cite{abbesgros}. The right topology is the following \cite[10]{abbesgros}: it is the topology generated by the coverings $\{(W_i,U_i)\to (W,U)\}_{i\in I}$ such that either $U_i=U$ for all $i\in I$ and $(W_i\to W)_{i\in I}$ is an \'etale covering, or $(U_i\to U)_{i\in I}$ is an \'etale covering and $W_i=W\times_UU_i$. We denote by $\mathcal{X}$ the site of such pairs $(U,W)$ with this topology.

\begin{corollary}
Let $w:X_{\bar{K},\et}\to\mathcal{X}$ be the morphism of sites induced by the functor $(U,W)\mapsto W$. For any locally constant constructible \'etale sheaf of abelian groups $\mathbb{L}$ on $X_{\bar{K}}$, $w$ induces a canonical isomorphism
\[ H^{\ast}(\mathcal{X},w_{\ast}\mathbb{L})\cong H^{\ast}_{\et}(X_{\bar{K}},\mathbb{L}). \]
More precisely, if $\sigma:\mathcal{X}\to X_{\et}$ is the morphism induced by the functor $U\mapsto (U,U_{\bar{K}})$ and $\bar{j}:X_{\bar{K},\et}\to X_{\et}$ the canonical map, then the natural map $R\sigma_{\ast}(w_*\mathbb{L})\to R\bar{j}_{\ast}\mathbb{L}$ is a quasi-isomorphism.
\end{corollary}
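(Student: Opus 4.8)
The plan is first to reduce the asserted cohomology isomorphism to the more precise sheaf-level statement about $R\sigma_\ast$, then to reduce that to a vanishing of the higher direct images $R^qw_\ast\mathbb{L}$, which is exactly where the $K(\Pi,1)$ result \ref{corKpi1} enters. At the level of the defining continuous functors, $\bar j$ factors as $\sigma\circ w$: indeed $U\mapsto(U,U_{\bar K})\mapsto U_{\bar K}$ is precisely the functor inducing $\bar j$. Hence $R\bar j_\ast=R\sigma_\ast\circ Rw_\ast$, and the natural map $R\sigma_\ast(w_\ast\mathbb{L})\to R\bar j_\ast\mathbb{L}$ is obtained by applying $R\sigma_\ast$ to the edge morphism $w_\ast\mathbb{L}\to Rw_\ast\mathbb{L}$. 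Therefore it suffices to prove that this edge morphism is an isomorphism, i.e. that $R^qw_\ast\mathbb{L}=0$ for $q\geq 1$. Granting this, the cohomology isomorphism follows by taking $R\Gamma$: the Leray spectral sequence for $\sigma$ gives $R\Gamma(\mathcal{X},w_\ast\mathbb{L})=R\Gamma(X_{\et},R\sigma_\ast w_\ast\mathbb{L})=R\Gamma(X_{\et},R\bar j_\ast\mathbb{L})$, and Leray for $\bar j$ identifies the latter with $R\Gamma(X_{\bar K,\et},\mathbb{L})$.

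To prove $R^qw_\ast\mathbb{L}=0$ for $q\geq 1$, I would use that $R^qw_\ast\mathbb{L}$ is the sheaf on $\mathcal{X}$ associated to the presheaf $(U,W)\mapsto H^q_{\et}(W,\mathbb{L}|_W)$, where $\mathbb{L}|_W$ is again locally constant constructible since $W\to U_{\bar K}$ is finite \'etale. Fix $(U,W)$ and a class $\alpha\in H^q_{\et}(W,\mathbb{L})$ with $q\geq 1$; it is enough to produce a covering of $(U,W)$ in $\mathcal{X}$ on which $\alpha$ restricts to $0$. Using the \emph{horizontal} coverings of the Abbes--Gros topology I would first shrink $U$: by \ref{corKpi1} (and the fact that, in characteristic zero, the good neighbourhoods of \ref{corelementaryfib} remain $K(\Pi,1)$ after the base change $K\to\bar K$) one may cover $U$ by opens $U_i$ with $(U_i)_{\bar K}$ a $K(\Pi,1)$. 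Base-changing replaces $W$ by $W_i=W\times_UU_i$, which is a finite \'etale cover of the $K(\Pi,1)$ space $(U_i)_{\bar K}$, hence itself a $K(\Pi,1)$ on each connected component.

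It then remains to kill $\alpha|_{W_i}$ by a \emph{vertical} covering, i.e. by a finite \'etale cover of $W_i$. Since $W_i$ is a $K(\Pi,1)$ and $\mathbb{L}$ is torsion (being locally constant constructible), its \'etale cohomology in positive degree is computed by the continuous cohomology of $\pi_1(W_i)$ with coefficients in the stalk $M$ of $\mathbb{L}$; and for a profinite group the restriction maps kill every positive-degree class over a suitable open subgroup, i.e. $\varinjlim_{U'}H^q(U',M)=0$ for $q\geq 1$, the colimit being taken over open subgroups. Translated back, $\alpha|_{W_i}$ dies on a finite \'etale cover $W_i'\to W_i$, which is precisely a vertical covering in $\mathcal{X}$. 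Composing the horizontal and vertical steps shows that $\alpha$ sheafifies to $0$, so $R^qw_\ast\mathbb{L}=0$ and the proof is complete.

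The crux is this last vanishing $R^qw_\ast\mathbb{L}=0$: it is where one must combine, in the right order, the two kinds of coverings allowed by the Abbes--Gros topology (horizontal shrinking of $U$ to reach a $K(\Pi,1)$, then vertical finite-\'etale refinement of $W$) with the local triviality of positive-degree cohomology of profinite groups. The subsidiary points — that $\bar j=\sigma\circ w$, that lcc sheaves in characteristic zero are torsion, and that the $K(\Pi,1)$ property of \ref{corKpi1} is inherited by the geometric fibre and by finite \'etale covers — are routine and I would dispatch them quickly.
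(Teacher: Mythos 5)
Your argument is correct in substance but follows a genuinely different route from the paper's. You factor $\bar j=\sigma\circ w$ and reduce everything to the vanishing $R^qw_*\mathbb{L}=0$ for $q\geq 1$, which you prove directly on the Faltings site by composing a horizontal covering (shrinking $U$ \'etale-locally until $U_{\bar K}$ is a $K(\Pi,1)$) with a vertical covering (a finite \'etale cover of $W$ killing a positive-degree class, via the vanishing of $\varinjlim_{U'}H^q(U',M)$ over open subgroups of a profinite group). This is a strictly stronger statement than the one in the paper, since the paper only asserts the quasi-isomorphism after applying $R\sigma_*$. The paper instead stays on $X_{\et}$: it invokes the Abbes--Gros computation (their 10.40 and 10.41) identifying $R^i\sigma_*(w_*\mathbb{L})$ with the sheafification of $U\mapsto H^i_{\Fet}(U_{\bar K},\mathbb{L})$, and then checks on stalks that the comparison with $R^i\bar j_*\mathbb{L}$ is an isomorphism using cofinality of $K(\Pi,1)$ neighbourhoods. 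Both arguments consume the same geometric input (Corollary \ref{corKpi1} together with the generic-fibre case from \cite[Exp.\ XI]{sga4.3}); yours trades the citations of \cite[10.40, 10.41]{abbesgros} for a direct manipulation of the two kinds of coverings generating the topology of $\mathcal{X}$, and in exchange yields the sharper exactness of $w_*$ on lcc sheaves.

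One small gap to repair: you justify the horizontal covering step by \ref{corKpi1} alone, but that corollary only produces $K(\Pi,1)$ neighbourhoods of closed points of the \emph{special} fibre $U_k$. An \'etale $U\to X$ may have closed points of $U_K$ lying in no such neighbourhood (indeed $U$ may have empty special fibre), so to obtain a jointly surjective family $\{U_i\to U\}$ you must also invoke the classical existence of $K(\Pi,1)$ (good) neighbourhoods for smooth varieties in characteristic zero, as the paper does for the points $y\in U_K$. With that reference added, and with the observation (which you make) that the good neighbourhoods of \ref{elementaryfib} remain $K(\Pi,1)$ after base change to $\bar K$ and that finite \'etale covers of $K(\Pi,1)$'s are $K(\Pi,1)$'s, your proof is complete.
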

\begin{proof}
Let $i\in\mathbb{N}$ and let us show that the canonical map $\psi:R^i\sigma_{\ast}(w_*\mathbb{L})\to R^i\bar{j}_{\ast}\mathbb{L}$ is an isomorphism. Let $\mathcal{H}^i$ be the sheaf on $X_{\et}$ associated to the presheaf
\[ U\mapsto H^i_{\Fet}(U_{\bar{K}},\mathbb{L}) \]
where the subscript $\Fet$ means we take cohomology for the finite-\'etale topology. There is a canonical map (\cite[10.40]{abbesgros})
\[ \phi:\mathcal{H}^i\to R^i\sigma_{\ast}(w_*\mathbb{L}) \]
such that the composition $\psi\circ\phi$ is the map induced by the natural transformation $H^i_{\Fet}((-)_{\bar{K}},\mathbb{L})\to H^i_{\et}((-)_{\bar{K}},\mathbb{L})$. Let us show that $\psi\circ\phi$ is an isomorphism. Let $x\to X$ be a geometric point. Let $U\to X$ be an \'etale neighbourhood of $x$. Choose a specialization $y\to U$ of $x$ such that the image of $y$ is closed in $U$. If $y\in U_k$ (resp. $y\in U_K$), then by \ref{corKpi1} (resp. \cite[Exp. XI]{sga4.3}) there is an open $U'\subset U$ such that $y\to U$ factors through $U'$ with $U'_K$ (hence also $U'_{\bar{K}}$) a $K(\Pi,1)$. Then the map $x\to U$ factors through $U'$. So this implies that the set $I$ of all \'etale neighbourhoods $U$ of $x$ such that $U_{\bar{K}}$ is $K(\Pi,1)$ is cofinal in the set of all \'etale neighbourhoods of $x$. Thus,
\[ \mathcal{H}^i_x=\varinjlim_{U\in I}H^i_{\Fet}(U_{\bar{K}},\mathbb{L})=\varinjlim_{U\in I}H^i_{\et}(U_{\bar{K}},\mathbb{L})=\left(R^i\bar{j}_{\ast}\mathbb{L}\right)_{x} \]
which proves that $\psi\circ\phi$ is an isomorphism. Since $\phi$ is an isomorphism by \cite[10.41]{abbesgros}, so is $\psi$.
\end{proof}

\bibliographystyle{amsplain}
\bibliography{goodnbhdbib}
\end{document}